
\documentclass{article}

\usepackage[english,USenglish,UKenglish]{babel}
\usepackage{amssymb}
\usepackage{amstext}
\usepackage{amsmath}

\usepackage{amsmath}
\usepackage{txfonts}
\usepackage{theorem}
\usepackage{graphicx}
\usepackage[all]{xy}
\usepackage{latexsym}

\newtheorem{defn}{Definition}[section]
\newtheorem{thm}[defn]{Theorem}
\newtheorem{prop}[defn]{Proposition}
\newtheorem{lem}[defn]{Lemma}

\newtheorem{cor}[defn]{Corollary}
\newtheorem{prob}[defn]{Problem}
{\theorembodyfont{\upshape} \newtheorem{rem}[defn]{Remark}}
{\theorembodyfont{\upshape} }
{\theorembodyfont{\upshape} \newtheorem{proof}{Proof}}

\begin{document}

\title{On the Fukaya-Seidel categories of surface Lefschetz fibrations}
\author{Satoshi Sugiyama\thanks{sugi3@ms.u-tokyo.ac.jp}}

\date{}
\maketitle

\begin{abstract}

We prove that a positive allowable Lefschetz fibration, PALF in short, admits a structure of exact Lefschetz fibration in the sense of Seidel \cite{Se08}. If the two-fold first Chern class of the total space is zero, we obtain the Fukaya-Seidel category. We prove that the derived Fukaya-Seidel category of PALF is independent of the choice of  the symplectic structure.
At the end of this paper, we study examples and show that derived Fukaya-Seidel categories have more information than the Milnor lattices of PALFs.
\end{abstract}

\setcounter{tocdepth}{2}
\tableofcontents

\section{Introduction}\label{sec:Intro}

Our first goal in this paper is to define the Fukaya-Seidel categories for positive allowable Lefschetz fibrations, PALFs in short, and prove their invariance under the condition that the two-fold first Chern class is zero.
The Fukaya-Seidel categories are defined for exact Lefschetz fibrations, which are, roughly speaking, Lefschetz fibrations with suitable exact symplectic structure \cite{Se08}. Those categories are studied in the context of homological mirror symmetry.
The celebrated homological mirror symmetry conjecture was first proposed by Kontsevich \cite{Ko94} and predicts the equivalence of two triangulated categories $D^{\pi}Fuk(M)$ and $D^b coh(X)$ for certain pairs of Calabi-Yau manifolds $(M, X)$, called mirror pair. Here the former category $D^{\pi}Fuk(M)$ is the split closure of the  derived Fukaya category of $M$ as a symplectic manifold \cite{FOOO10} and the latter category $D^b coh (X)$ is the derived category of the category of coherent sheaves on $X$ as a complex manifold.
This conjecture is proved for several pairs of Calabi-Yau manifolds. See, for example, \cite{PZ01}, \cite{Fuk02}, \cite{Se15} and so on.

The Fukaya-Seidel categories appear when we consider the case that $X$ is a Fano manifold. In this case, the mirror partner of $X$ is a Landau-Ginzburg model $W$ \cite{HV00}.
Roughly speaking, the Landau-Ginzburg model is a holomorphic function $W$ on a K\"{a}hler manifold, called potential function, with isolated singularities.
The derived Fukaya-Seidel category $D \mathcal{F}(W)^{\to}$ is the triangulated category defined by using the data of the singularities, with the techniques of symplectic geometry \cite{Se08}, expected to be equivalent to $D^b coh(X)$.

On the other hand, PALFs are one of the most studied geometric structure in 4-dimensional topology.
The Lefschetz fibrations are completely determined in terms of monodromy operator on a regular fibre, so it is related to the study of mapping class groups of oriented surfaces, hence it has a combinatorial nature \cite{Kas80}.
If a given 4-manifold admits a structure of Lefschetz fibration, we can compute its homology groups, fundamental groups, and (some part of) the intersection forms by the data of monodromy. Moreover, if closed 4-manifold $X$ admits a structure of closed Lefschetz fibration, we can compute the signature of $X$ \cite{EN05}.

There are two very fundamental results. The first result due to Donaldson shows that every symplectic 4-manifold admits a structure of Lefschetz fibration after sufficiently many times of blow-ups \cite{Do99}. The second result due to Gompf \cite{Go05} is that every positive Lefschetz fibration admits a symplectic structure. After those two papers, there are many studies involving techniques of both symplectic geometry and PALFs, see e.g.
\cite{DS03}, \cite{Au06}, \cite{AS08}, \cite{In15}.

Along this context, the author proposes a new method to study the PALFs with symplectic technique, the derived Fukaya-Seidel
categories.
We first prove that any PALF admits a structure of exact Lefschetz fibration (Theorem \ref{thm:EExactSymplecticStr}). Thereafter, we prove that the derived Fukaya-Seidel category of a PALF is independent of the choice of the exact symplectic structure attached to the PALF (Theorem \ref{thm:InvOfFSCat}).

We can say that the concept of Fukaya-Seidel categories is a ``categorification" of the Milnor lattices since the K-groups of the derived Fukaya-Seidel categories coincide with the Milnor lattices. Thus, we naturally expect that the derived Fukaya-Seidel categories catch some sensitive information that we cannot capture it by the Milnor lattices. In Section \ref{sec:ExAndProb}, we study examples (Theorem \ref{thm:ComputationalEx}) showing that this is true, i.e. the Fukaya-Seidel categories do have more information than the Milnor lattices. In this theorem, we distinguish three PALFs that they share the same Milnor lattice by their Hochschild cohomology groups of Fukaya-Seidel categories.
Hence, we have a new method to distinguish PALFs.

The contents of this paper are as follows.
In Section \ref{sec:AlgPre1}, we review basic definitions and properties of $A_{\infty}$-categories.
In Section \ref{sec:FSCat}, we review the definition of the Fukaya-Seidel categories step by step and present a combinatorial description.
The combinatorial description is only used for the computation of examples in Section \ref{sec:ExAndProb} and \ref{sec:PrfOfCompEx}. In Section \ref{sec:LFAndExactStr}, we construct structures of exact Lefschetz fibrations to PALFs. In Section \ref{sec:InvOfFSCat}, we prove Theorem \ref{thm:InvOfFSCat}, which is the main theorem of this paper.
In Section \ref{sec:ExAndProb}, we study some examples and present a problem. In Section \ref{sec:PrfOfCompEx}, we prove some statements remained unproved in Section \ref{sec:ExAndProb}.

Throughout this paper, all manifolds are compact and have fixed orientations, the additional structures on manifolds are compatible with their orientations unless otherwise stated,
fields are algebraically closed, categories are considered over a fixed field $k$, and every hom set is of finite dimension.
We denote the closed unit disc in \(\mathbb{C}\) by \(D\), and  oriented surface with genus $g$, and $k$ boundary components by $\Sigma _{g, k}$.

{\bf Acknowledgement}

I am deeply grateful to my supervisor Toshitake Kohno for training me , giving me large numbers of pieces of advice, and encouraging me to write my master thesis.
Moreover, he also corrected numbers of errors of English of this paper. I am also grateful for it.
I would like to express my gratitude to my colleagues, especially to Shota Tonai and Tomohiro Asano for discussing with me and pointing out many errors.
In addition, I  have greatly benefitted from golden advice and fruitful discussion on proof of the main theorem with Morimichi Kawasaki and Fumihiko Sanda.
I would like to thank Noriyuki Hamada and Takahiro Ohba for pointing out a fatal error in the first version of this paper.
Finally, I want to mention that this paper can not exist without longtime support and storge of my parents.
I  express my deepest gratitude to my parents.

This work was supported by the Program for Leading Graduate 
Schools, MEXT, Japan.

\section{Algebraic preliminaries}\label{sec:AlgPre1}

In this section, we review basic definitions and properties of $A_{\infty}$-categories, just because the Fukaya-Seidel category is an $A_{\infty}$-category.
The reader who wants more detail, please refer section 1, 2 in \cite{Se08}.

\subsection{Deifnitions}\label{subsec:Def}

\begin{defn}[$A_\infty$-category]\label{def:AInfCat}
An $A_\infty$-category $\mathcal{A}$ consists
of the following data:
\begin{enumerate}
\item a set $Ob(\mathcal{A})$,
\item $\mathbb{Z}$-graded vector spaces $\displaystyle hom_{\mathcal{A}}(X, Y) = \bigoplus_{i　\in \mathbb{Z}}hom_{\mathcal{A}}^i(X, Y)$ for $X, Y \in Ob(\mathcal{A})$,
\item maps of degree $2-d$, called higher composition maps,
\begin{multline*}
\mu^d \colon hom_\mathcal{A}(X_{d-1}, X_d) \otimes hom_\mathcal{A}(X_{d-2},
X_{d-1}) \otimes \cdots \otimes hom_\mathcal{A}(X_0, X_1) 
\\ \to　hom_\mathcal{A}(X_0, X_d),
\end{multline*}
for each $d \geq 1$ and $X_0, X_1, \dots , X_d \in Ob(\mathcal{A})$.
\end{enumerate}
 These \(\mu\)'s must satisfy the \(A_\infty\)-associativity relation:
 \begin{displaymath}
\sum _{i, j, k} (-1)^{\bigstar _i} \mu ^k (a_d, \dots , a_{i+j+1}, \mu ^j
(a_{i+j}, \dots , a_{i+1}), a_i, \dots a_1) = 0,
\end{displaymath}
where $\displaystyle \bigstar _i = \sum _{1 \leq l \leq i} ( |a_l| - 1)$ , $\, (|a_l| = deg(a_l))$ and the sum is taken over all possible pairs of $i, j, k$, namely the indices run over $1 \leq j \leq d, \, k = d-j+1, \, 0 \leq i \leq d-j$.
\end{defn}

The notion of $A_{\infty}$-categories is a generalisation of dg-categories.
To see this, we first study the \(\mu\)'s and the \(A_\infty\)-associativity relations.
In the first case, when \(d=1\), the \(A_\infty\)-relation says that \(\mu ^1 (\mu
^1 (a_1)) = 0\), and \(\deg \mu ^1 = 2-1 = 1\). Thus, \((hom_{\mathcal{A}}(X_0,
X_1), \mu^1)\) is a cochain complex. We abbreviate \(\mu^1\) by \(d\) for a moment, even though we use the same letter $d$ for two meanings, degree of $\mu$'s and $\mu^1$.
Next, we consider the case that \(d=2\). We can see \(\mu^2\) as a composition of morphisms,
so we denote \(\mu^2(a_2, a_1)\) as \( (-1)^{|a_1|} a_2 \circ a_1 \). 
Then the \(A_\infty\)-relation says that \(d(a_2 \circ a_1) \pm da_2 \circ
a_1 \pm a_2 \circ da_1 = 0\). This is the (graded) Leibniz' rule between
$\mu^1$ and $\mu^2$.
In the third case, when $d=3$, the \(A_\infty\)-relation says that
\begin{displaymath}
(a_3 \circ a_2) \circ a_1 - a_3 \circ (a_2 \circ a_1) = \pm d\mu^3(a_3,
a_2, a_1) \pm \mu^3(da_3, a_2, a_1) \pm (\text{other two terms}).
\end{displaymath}
In general, the right hand side does not vanish, so the associativity
of the composition fails.
However it is homotopy associative, i.e. the induced composition maps
on the cohomology groups of morphisms are associative.

If the higher composition maps $\mu^d$ for $d > 2$ vanishes, then we obtain a non-unital dg-category by setting $\partial a \coloneqq (-1)^{|a|}\mu^1(a)$, and $a_2 \circ a_1 \coloneqq (-1)^{|a_1|}\mu^2(a_2, a_1)$.
Here, ``non-unital" means that we don't impose the condition of the existence of the identity morphisms.
Hence, the notion of $A_\infty$-categories are generalisation of dg-categories.

For an $A_{\infty}$-category $\mathcal{A}$, we define its {\it cohomology category} $H(\mathcal{A})$ by setting
$Ob(H(\mathcal{A})) \coloneqq Ob(\mathcal{A})$, $Hom_{H(\mathcal{A})}(X, Y) \coloneqq H(hom_{\mathcal{A}}(X, Y), \mu_{\mathcal{A}}^1)$ for each $X, Y \in Ob(H(\mathcal{A}))$, and the composition is defined by $a_2 \circ a_1 \coloneqq (-1)^{|a_1|}\mu_{\mathcal{A}}^2(a_2, a_1)$.
We define $H^0(\mathcal{A})$ in the same way.
The cohomology category is a graded $k$-linear category except for the possible lack of the identity morphisms.
If the cohomology category $H(\mathcal{A})$ of $\mathcal{A}$ has identity morphisms, we say that $\mathcal{A}$ is {\it cohomologically unital}, or {\it c-unital}.
In this paper, all $A_\infty$-categories are c-unital unless otherwise stated.

\subsection{Directed $A_{\infty}$-categories and its equivalences}

In this subsection, we study the concept of the directed $A_{\infty}$-categories that are mainly used in this paper.
It is worth repeating that any $A_{\infty}$-category of this paper is c-unital and the hom spaces are finite dimensional.

\begin{defn}[directed $A_{\infty}$-category]\label{def:DirAInfCat}

An $A_{\infty}$-category $\mathcal{A^{\to}}$ is a directed $A_{\infty}$-category if the following conditions hold.
\begin{enumerate}
\item The set $Ob(\mathcal{A}^{\to})$ is finite.\item There exists a total order of $Ob(\mathcal{A^{\to}})$ such that $hom_{\mathcal{A^{\to}}}(X, Y) = 0$ unless $X \leq Y$ in $Ob(\mathcal{A^{\to}})$ and $hom_{\mathcal{A^{\to}}}(X, X) = k \cdot 1_X$.
\end{enumerate}

\end{defn}

Since every totally ordered finite set $A$ is isomorphic to $\{ 1 < 2 < \cdots < n = \# A \}$ as ordered sets, for a directed $A_{\infty}$-category $\mathcal{A^{\to}}$, we set $Ob(\mathcal{A^{\to}}) = \{ X_1 < X_2 < \dots < X_n\}$ or likewise in this section.

\begin{defn}\label{def:DirSubCat}
Let $\mathcal{A}$ be an $A_{\infty}$-category and $\boldsymbol{Y} = (Y_1, Y_2, \dots Y_n)$ be a collection of objects.
We define directed $A_{¥\infty}$-category $\mathcal{A}^{\to}(\boldsymbol{Y})$ as follows:
\begin{enumerate}
\item $Ob(\mathcal{A}^{\to}(\boldsymbol{Y})) = \{ Y_1, Y_2, \dots Y_n \}$,
\item $hom_{\mathcal{A}^{\to}(\boldsymbol{Y})}(Y_i, Y_j) =
\begin{cases}
hom_{\mathcal{A}}(Y_i, Y_j) & (i < j) \\
k \cdot 1_{Y_i} & (i = j) \\
0 & (i > j) \, .
\end{cases}$
\item The higher composition maps $\{ \mu _{\mathcal{A}^{\to}(\boldsymbol{Y})}^d \}$ are induced from $\{ \mu _{\mathcal{A}}^d \}$ in the canonical way.
\end{enumerate}
\end{defn}

\begin{lem}[Lemma. 5. 21. in \cite{Se08}]\label{lem:equivOfDirSubCat}

\begin{enumerate}
\item Let $\boldsymbol{Y}$ and $\boldsymbol{Y'}$ are collections of objects in an $A_{\infty}$-category $\mathcal{A}$ satisfying each $Y_i$ and $Y'_i$ are isomorphic in $H^0(\mathcal{A})$.
Then, $\mathcal{A}^{\to}(\boldsymbol{Y})$ and $\mathcal{A}^{\to}(\boldsymbol{Y'})$ are quasi-isomorphic.
\item Let $\mathcal{F} \colon \mathcal{A} \to \mathcal{B}$ be an quasi-equivalence, $\boldsymbol{Y} = (Y_1, \dots \, , Y_n)$ be a collection of objects in $\mathcal{A}$ and $\boldsymbol{Y'} \coloneqq (\mathcal{F} Y_1, \dots \, , \mathcal{F}Y_n)$ be a collection of objects in $\mathcal{B}$. 
Then, $\mathcal{A}^{\to}(\boldsymbol{Y})$ and $\mathcal{B}^{\to}(\boldsymbol{Y'})$ are quasi-isomorphic.
\end{enumerate}

\end{lem}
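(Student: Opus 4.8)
The plan is to reduce both statements to the very definition of a quasi-isomorphism of $A_{\infty}$-categories: an $A_{\infty}$-functor is a quasi-isomorphism precisely when it is a bijection on objects and its linear part induces an isomorphism $H\bigl(hom(X,Y),\mu^1\bigr) \to H\bigl(hom(\mathcal{G}X,\mathcal{G}Y),\mu^1\bigr)$ on every hom-complex. Since the object sets are finite and carry the prescribed correspondences $Y_i \mapsto Y'_i$, in each case it suffices to produce an $A_{\infty}$-functor $\mathcal{G}$ realising that bijection on objects whose linear part $\mathcal{G}^1$ is a quasi-isomorphism on each hom-complex. I would treat (2) first, since there its functor is essentially handed to us, and then (1), which needs a genuine construction.

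For part (2), I would first replace $\mathcal{F}$ by a quasi-isomorphic strictly unital functor (this changes neither objects nor the quasi-equivalence type), and then set $\mathcal{G}\colon \mathcal{A}^{\to}(\boldsymbol{Y}) \to \mathcal{B}^{\to}(\boldsymbol{Y'})$ by $\mathcal{G}(Y_i)=\mathcal{F}(Y_i)=Y'_i$ and $\mathcal{G}^d := \mathcal{F}^d$ on composable strings. The one thing to verify is compatibility with the directed structure: a composable string of non-identity morphisms in $\mathcal{A}^{\to}(\boldsymbol{Y})$ runs through strictly increasing indices $i_0 < i_1 < \dots < i_d$, so $\mathcal{F}^d$ lands in $hom_{\mathcal{B}}(Y'_{i_0},Y'_{i_d})$ with $i_0<i_d$, which is exactly $hom_{\mathcal{B}^{\to}(\boldsymbol{Y'})}(Y'_{i_0},Y'_{i_d})$; strings containing an identity are handled by strict unitality, which sends $1_{Y_i}$ to $1_{Y'_i}$ and kills the higher terms, matching the $k\cdot 1$ entries of the directed target. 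The $A_{\infty}$-functor equations for $\mathcal{G}$ are then literally the restrictions of those for $\mathcal{F}$, and the same index-monotonicity keeps every term inside the directed part. Finally $\mathcal{G}^1$ equals $\mathcal{F}^1$ on $hom_{\mathcal{A}}(Y_i,Y_j)$ for $i<j$, hence is a quasi-isomorphism because $\mathcal{F}$ is cohomologically full and faithful; it is the evident isomorphism $k\cdot 1_{Y_i}\to k\cdot 1_{Y'_i}$ for $i=j$ and the zero map $0\to 0$ for $i>j$. So $\mathcal{G}$ is a quasi-isomorphism.

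For part (1) no ambient functor is supplied, so I would build one from the isomorphism data. For each $i$ choose a cocycle $t_i \in hom^0_{\mathcal{A}}(Y_i,Y'_i)$ representing the given class in $H^0(\mathcal{A})$ and a cocycle $s_i \in hom^0_{\mathcal{A}}(Y'_i,Y_i)$ representing its inverse. Set $\mathcal{G}(Y_i)=Y'_i$, $\mathcal{G}^1(1_{Y_i})=1_{Y'_i}$, and for $i<j$ define $\mathcal{G}^1(a)$ to be, up to a sign, $\mu^2_{\mathcal{A}}\bigl(t_j,\mu^2_{\mathcal{A}}(a,s_i)\bigr)$, which on cohomology is the conjugation $[a]\mapsto [t_j][a][s_i]$ and is therefore an isomorphism $H\bigl(hom_{\mathcal{A}}(Y_i,Y_j)\bigr)\to H\bigl(hom_{\mathcal{A}}(Y'_i,Y'_j)\bigr)$. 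The difficulty is that $\mathcal{G}^1$ is not a chain map on the nose, as $\mu^2$ is only homotopy-associative, so the functor equations force higher terms $\mathcal{G}^d$ with $d\ge 2$. I would construct these inductively in $d$: the functor equation in arity $d$ writes $\mu^1_{\mathcal{A}}\bigl(\mathcal{G}^d(\cdots)\bigr)$ as a fixed expression in the lower $\mathcal{G}^{<d}$, the structure maps $\mu^{\leq d}_{\mathcal{A}}$, and the cocycles $t_i,s_i$; one checks this expression is a $\mu^1$-cocycle and then solves for $\mathcal{G}^d$.

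The main obstacle is exactly this inductive step, and it is where the directed hypothesis does the work. Because $hom_{\mathcal{A}^{\to}(\boldsymbol{Y})}(Y_i,Y_j)$ vanishes for $i>j$ and equals $k\cdot 1$ for $i=j$, every composable string has strictly increasing indices, so the functor equations form a finite, upper-triangular system rather than an infinite one, and the obstruction at each stage lives in $H\bigl(hom_{\mathcal{A}}(Y'_{i_0},Y'_{i_d})\bigr)$, where it is killed using that $\mathcal{G}^1$ is already a cohomology isomorphism together with chosen homotopies witnessing $[t_i][s_i]=1=[s_i][t_i]$. This is a standard homological-perturbation argument whose solvability and termination are guaranteed by directedness; once $\mathcal{G}$ is assembled, the full-faithfulness check of part (2) again shows it is a quasi-isomorphism. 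One could alternatively first produce a quasi-isomorphism between the full $A_{\infty}$-subcategories of $\mathcal{A}$ on $\boldsymbol{Y}$ and on $\boldsymbol{Y'}$ from the $t_i$ and then argue that passing to directed subcategories preserves quasi-isomorphisms, but this reduces to the same perturbation computation, so I would carry it out directly on the directed categories.
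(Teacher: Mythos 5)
First, a point of reference: the paper does not prove this statement at all --- it is imported verbatim as Lemma 5.21 of \cite{Se08} and used as a black box --- so your attempt can only be measured against the standard argument. Your part (2) is essentially that argument and is fine: after a strictly unital replacement, monotonicity of the indices $i_0<\dots<i_d$ keeps every term of the $A_\infty$-functor equations inside the directed parts, and $\mathcal{G}^1=\mathcal{F}^1$ is a quasi-isomorphism on each hom-complex because $\mathcal{F}$ is cohomologically full and faithful.

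Part (1), however, has a genuine gap at exactly the step you flag as the main obstacle. Your $\mathcal{G}^1(a)=\mu^2\bigl(t_j,\mu^2(a,s_i)\bigr)$ is the right first-order map, and the arity-$2$ obstruction does vanish: over a field the cohomology of the complex of bilinear maps is $\mathrm{Hom}\bigl(H\otimes H,H\bigr)$, where the obstruction is the failure of conjugation to be a functor on cohomology, which is zero since $[s_j][t_j]=1_{Y_j}$. But for $d\ge 3$ the obstruction class lives in $\mathrm{Hom}^{2-d}\bigl(H(hom(Y_{i_{d-1}},Y_{i_d}))\otimes\cdots\otimes H(hom(Y_{i_0},Y_{i_1})),\,H(hom(Y'_{i_0},Y'_{i_d}))\bigr)$ --- not in $H(hom(Y'_{i_0},Y'_{i_d}))$ as you write --- there is no degree reason for it to die, and the fact that $\mathcal{G}^1$ induces isomorphisms on cohomology does \emph{not} kill it: if it did, every isomorphism of cohomology categories would lift to an $A_\infty$-quasi-isomorphism, which is false (a non-formal $A_\infty$-algebra versus its cohomology algebra). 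Directedness only makes the system of equations finite; it says nothing about solvability, which is precisely the content of the lemma. What actually makes the construction close up is that the conjugation is \emph{inner}: choosing a chain-level homotopy $h_j$ with $\mu^1(h_j)=\mu^2(s_j,t_j)-1_{Y_j}$, one writes the higher terms explicitly --- in a dg model, $\mathcal{G}^d(a_d,\dots,a_1)=\pm\,t_{i_d}\,a_d\,h_{i_{d-1}}\,a_{d-1}\cdots h_{i_1}\,a_1\,s_{i_0}$ satisfies the functor equations on the nose --- and in the general $A_\infty$ case the clean route is to push everything into the dg-category of $A_\infty$-modules via the Yoneda embedding (cohomologically full and faithful, and sending $Y_i$, $Y'_i$ to quasi-isomorphic objects), apply your part (2) twice, and run the dg computation there. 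Without some version of this explicit construction, the assertion that ``the obstruction is killed'' is an assertion of the lemma itself rather than a proof of it.
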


\begin{rem}\label{rem:DerEquiv}
For an $A_{\infty}$-category $\mathcal{A}$, we can define the category $Tw\mathcal{A}$
of twisted complexes of $\mathcal{A}$ as a triangulated $A_{\infty}$-category.
Moreover, if an $A_\infty$-functor $\mathcal{F} \colon \mathcal{A} \to \mathcal{B}$ is quasi-equivalence,
then its induced functor $Tw\mathcal{F} \colon Tw\mathcal{A} \to Tw\mathcal{B}$
is a quasi equivalence of triangulated $A_{\infty}$-categories.

Hence, in particular, $\mathcal{F}$ induces an equivalence of derived categories
$D\mathcal{A} \coloneqq H^0Tw\mathcal{A}$ and $D\mathcal{B}$, namely $D\mathcal{F}
\colon D\mathcal{A} \to D\mathcal{B}$ is an equivalence of triangulated categories.

\end{rem}

We did not and will not go into the precise definition of the $A_\infty$-functor, quasi-equivalence of c-unital $A_\infty$-categories, and the category of twisted complexes. This is a generalisation of the construction in
the case of dg-categories\cite{BoKa91}. Details can be found in section 3
of \cite{Se08}.

\section{The Fukaya-Seidel categories}\label{sec:FSCat}

In this section, we review some basic notations and introduce the Fukaya-Seidel categories in a combinatorial way.
There is nothing new but combinatorial description of the Fukaya-Seidel categories, i.e. the sign of polygons which are used to define the higher composition maps $\mu$'s.

First, we introduce the notion of the Lefschetz fibrations, exact symplectic Lefschetz fibrations, and vanishing cycles which play an important role in this paper and introduce some basic properties of them without proofs.
Then, we define the Fukaya-Seidel categories by a combinatorial way which is originally defined via Floer theory on the vanishing cycles.

The contents of this section are just rewriting the materials in \cite{Se08}.
Hence, we can apply any theorem in \cite{Se08}.

By the way, there is a combinatorial definition of the Fukaya categories of closed Riemann surface in \cite{Ab08}.
The way of definition in this paper is very similar to that in \cite{Ab08}, even though Abouzaid treats with closed Riemann surfaces while we consider  Riemann surfaces with boundaries.
However, some signs of $A_{\infty}$-structure are not the same because the choices of Pin structures of Lagrangian branes are different.

\subsection{Exact Lefschetz Fibrations}\label{subsec:exactLF}

In this subsection, we fix some notations which we use in this paper.
We study an {\it exact symplectic manifold} $M = (M, \omega, \theta, J)$ in the sense of Seidel \cite{Se08}.
This means that $(M, \omega)$ is a sympectic manifold with corner, $\theta$ is an one form such that $d\theta = \omega$, the {\it negative Liouville vector field} $X_\theta$, which is defined by $\omega(\cdot, X_\theta) = \theta$, points strictly inwards on $\partial M$, an almost complex structure $J$ is compatible with $\omega$, and $\partial M$ is weakly $J$-convex.
We assume that the support of our Hamiltonian diffeomorphism $\varphi _H$ is away from the boundary, i.e. $\varphi_H$ fixes some open neighbourhood of $\partial M$. Set $\text{Ham}(M, \partial M)$ to be the group of such Hamiltonian diffeomorphisms. When we need to emphasise the symplectic structure, we write $\text{it as Ham}(M,
\partial M, \omega)$.

\begin{defn}[Lefschetz fibrations]\label{df:LF}

A smooth map $\pi \colon E^4 \to D$ is called a {\it Lefschetz fibration over $D$} if the following conditions are satisfied.

\begin{enumerate}
\item The total space $E$ is a manifold with corner.
\item Let $\text{Crit}( \pi )$ and $\text{Critv}(\pi )$ be the set of critical points and values of $\pi$ respectively. The map
$\text{Crit}( \pi ) \overset{\pi}\to \text{Critv}( \pi )$ is a bijection between finite sets, and $\text{Crit}( \pi ) \subset \mathring{E}$ (where $\mathring{E}$ is the set of interior points of $E$).
\item The restriction $\pi |_{\pi ^{-1} (D \setminus \text{Critv}(\pi )\, )}$ is a smooth fibre bundle, and the fiber $M$ is an oriented surface with boundary.
\item For all $p \in \text{Crit}(\pi )$, there exist complex coordinate neighbourhoods around $p$ and $\pi (p)$ such that $\pi$ is expressed as $\pi (z_0 , z_1) = z_0^2 + z_1^2$.
\item The boundary $\partial E$ satisfies the following triviality.
\end{enumerate}

The triviality condition of $\partial E$ is as follows. 
We have a natural decomposition of boundary $\partial E \cong \partial M \times D \cup \pi ^{-1}(\partial D)$ and we will identify them. The condition is that there exists an open neighbourhood $U \subset E$ of 
$\partial _h E \coloneqq \partial M \times D$ and a diffeomorphism to $\partial M \times [0, \varepsilon ) \times D$ such that the following diagram commutes.

\[\xymatrix{
U \ar[d]_{\pi} \ar[r]^{\simeq \,\,\,\,\,\,\,\,\,\,\,\,\,\,\,\,\,\,\,\,\,\,\,\,\,\,\,} & \partial M \times [0, \varepsilon ) \times D \ar[ld]^{pr_3} \\
D & \\}
\]

Here, $pr_i$ is the projection to the $i$-th component
(we will continue this notation for other projections).

For a Lefschetz fibration $\pi \colon E \to D$, 
we call $E$ the total space, $D$ the base space, and we use this notation 
$\partial _v E \coloneqq \pi ^{-1}(\partial D) \subset \partial E$.

\end{defn}

For a Lefschetz fibration $\pi \colon E \to D$ we sometimes denote it briefly by $\pi$.
Suppose we have two Lefschetz fibrations $\pi \colon E \to D$ and $\pi ' \colon E' \to D$.
An {\it isomorphism of Lefschetz fibrations} $f \colon \pi \to \pi '$ consists of two diffeomorphisms $f_{\text{tot}} \colon E \to E'$, and $f_{\text{base}}  \colon D \to D$ such that $\pi' \circ f_{\text{tot}} = f_{\text{base}} \circ \pi$.
 
\begin{rem}\label{rem:picture}

A Lefschetz fibration is a fibration with singular fibres.
The $\times$ symbol in Figure \ref{fig:LefschetzFibr} represents a critical value and the figure indicates that there is a singular point of  $A_1$-type singularity over the critical value.

\begin{figure}[hbt]
\centering
\includegraphics[width=8cm]{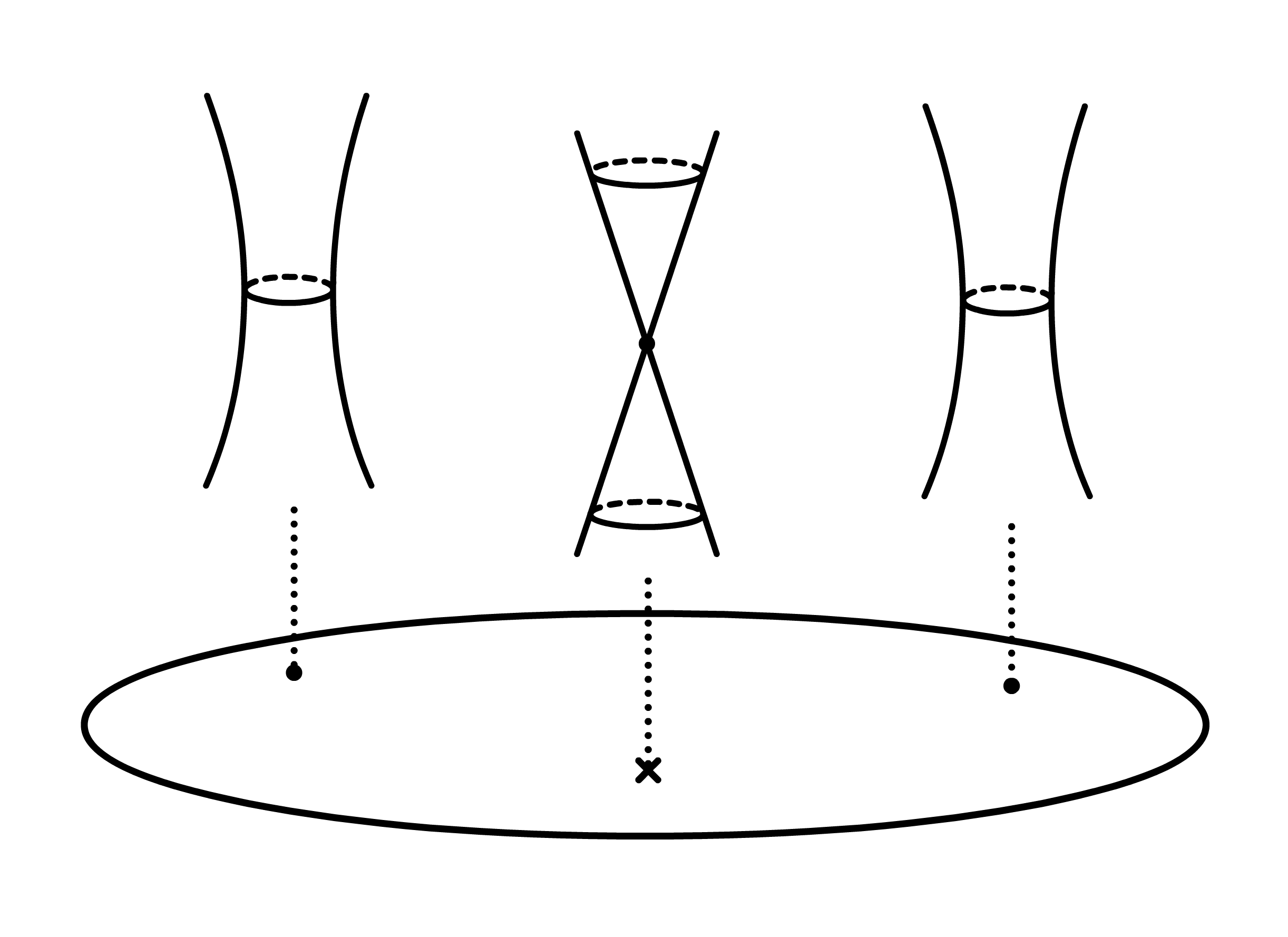}
\caption{Conceptual figure of Lefschetz fibrations}\label{fig:LefschetzFibr}
\end{figure}

\end{rem}

\begin{defn}\label{df:ExactSymplecticLefschetzFibr}

We call a map $\pi \colon E^4 \to D$ an {\it exact Lefschetz fibration} if the following conditions are satisfied.

\begin{enumerate}
\setcounter{enumi}{-1}
\item The total space $E = (E, \omega , \theta , J)$ is an exact symplectic manifold and $D = (D, j)$ is the complex closed unit disc where $j$ is a complex structure on $D$.
\item \label{enu:Jjhol} The map $\pi$ itself is a $(J, j)$-holomorphic Lefschetz fibration.
\item \label{enu:intble} The almost complex structure $J$ is integrable around $\text{Crit}(\pi )$.
\item The symplectic structure $\omega$ is the canonical one around $\partial _h E$.
\end{enumerate}

The canonical symplectic structure around $\partial _h E$ is as follows.
Let $U$ be the open neighbourhood of $\partial _h E$ that guarantees the triviality condition of Lefschetz fibration, and fix the diffeomorphism $f \colon U \to \partial M \times [0, \varepsilon ) \times D$.
Then the canonical symplectic structure is expressed as $\omega |_U = f^* (pr_{12}^* \omega _{\partial M \times [0 , \varepsilon )} + pr_3^* \omega _D )$.
Here $pr_{12}$ is the projection $pr_{12} \colon \partial M \times [0, \varepsilon ) \times D \to \partial M \times [0, \varepsilon )$, $\omega _D$ is the natural symplectic structure on $D$, and $\omega _{\partial M \times [0, \varepsilon )}$ is that of $\partial M \times [0, \varepsilon ) \cong \left( \bigsqcup S^1 \right) \times [0 , \varepsilon )$.

\end{defn}

\begin{rem}\label{rem:SeidelIsStronger}

The conditions in the definition above are stronger than those in (15a) ``Fibrations with singularities" of \cite{Se08}.
So, Seidel works in more general settings.

It is remarkable that Seidel does not assume that the symplectic form on $E$ is canonical around
$\partial _h E$. But the canonicality automatically holds when we use the definition of Seidel.
We can show this by radial trivialization of some neighbourhood of $\partial
_h E$ with the connection given by the symplectic form which we will discuss in subsection \ref{subsec:VanishingCycle}.

\end{rem}

Let $\pi \colon E \to D$ be an exact Lefschetz fibration. 
For all $y \in E$, the subspace $\ker \, (\pi _*)_y \subset T_y E$ is symplectic since 
$\pi$ is $(J, j)$-holomorphic and $J$ is $\omega$-compatible.
Hence, the regular fibre is again an exact symplectic manifold when we restrict $\omega , \theta $, and $J$ to the fibre.
We will abbreviate the restricted structures by the same symbol $\omega , \theta $, and $J$.

From now on, we use the canonical complex structure on $D \subset \mathbb{C}$ as $j$.
Hence, we will not mention the complex structure $j$ on $D$.


\subsection{Vanishing paths and vanishing cycles}\label{subsec:VanishingCycle}

We will define the Fukaya-Seidel categories of exact Lefschetz fibrations.
In this subsection, we gather the materials that we need in this paper from section 16 of \cite{Se08}. Especially, we introduce a part of the definition that we don't use the Floer theoretic method.

For an exact Lefschetz fibration $\pi \colon E \to D$, we set 
$\pi ^{\text{reg}} \coloneqq \pi |_{\pi ^{-1}(D \setminus \text{Critv}(\pi) \, )}$ and we denote the domain and target of \(\pi^{\text{reg}}\)  by $E^{\text{reg}}$ and $D^{\text{reg}}$ respectively.
For $y \in E^{\text{reg}}$, we define $H_y E^{\text{reg}} \subset T_y E^{\text{reg}}$ by 
\[H_y E^{\text{reg}} \coloneqq \left( \ker \, (\pi _*)_y \right) ^{\perp \omega } \coloneqq \{ v \in T_y E^{\text{reg}} \mid \omega (v , w) = 0 \, ( \forall w \in \ker \, (\pi _*)_y \, )\} . \]
Since, $\ker \, (\pi _*)_y$ is a symplectic subspace, we have $T_y E^{\text{reg}} = \ker \, (\pi _*)_y \oplus H_y E^{\text{reg}}$ and $\pi _* \colon H_y E^{\text{reg}} \overset{\cong}\to T_{\pi (y)} D^{\text{reg}}$.
Hence, $HE^{\text{reg}}$ defines\ a connection of $\pi ^{\text{reg}}$.
We call it the {\it symplectic connection} of $\pi ^{\text{reg}}$.

For all $y \in \partial _h E \cap E^{\text{reg}}$, we have $H_y E^{\text{reg}} \subset T_y (\partial _h E)$ by the very definition of exact Lefschetz fibration which says that the symplectic form around  $\partial _h E$ is the canonical one.
Hence, for a path $\gamma \colon [0, 1] \to D^{\text{reg}}$, we can define the parallel transport $\tilde \gamma {}_s^t \colon E_{\gamma (s)} \to E_{\gamma (t)}$.
We can prove easily that $\tilde \gamma {}_s^t$ is an isomorphism of exact symplectic manifolds (the isomorphism of exact symplectic manifolds is defined in (7a) of \cite{Se08}).

\begin{defn}\label{df:VC}

Let $\pi \colon E \to D$ be an exact Lefschetz fibration.
Fix a point $* \in D^{\text{reg}}$ and set $M \coloneqq E_*$.
We pick a path $\gamma\colon  [0, 1] \to D$ from $\text{Critv}(\pi )$ to $*$. We impose the condition that $\gamma$ is an embedding into $D$, $\gamma (0) \in \text{Critv}(\pi )$, $\gamma (1) = *$, and $\gamma ^{-1}(\text{Critv}(\pi ) \, ) = \{ 0 \}$. We call such a map $\gamma$ a {\it vanishing path}.

Let $p$ be the unique critical point of $\pi$ in $E_{\gamma (0)}$.
We define
$\Delta _{\gamma} \coloneqq \{ p \} \cup \{ y \, | \, y \in E_{\gamma (s)} \, (0 < s \leq 1), \, \lim _{t \to 0} \tilde \gamma {}_s^t(y) = p \}$ 
and we call it a {\it Lefschetz thimble}.
We say $L_{\gamma } \coloneqq \partial \Delta _{\gamma} \subset M$ a {\it vanishing cycle of $\gamma$}.

\end{defn}

\begin{figure}[hbt]
\centering
\includegraphics[width=8cm]{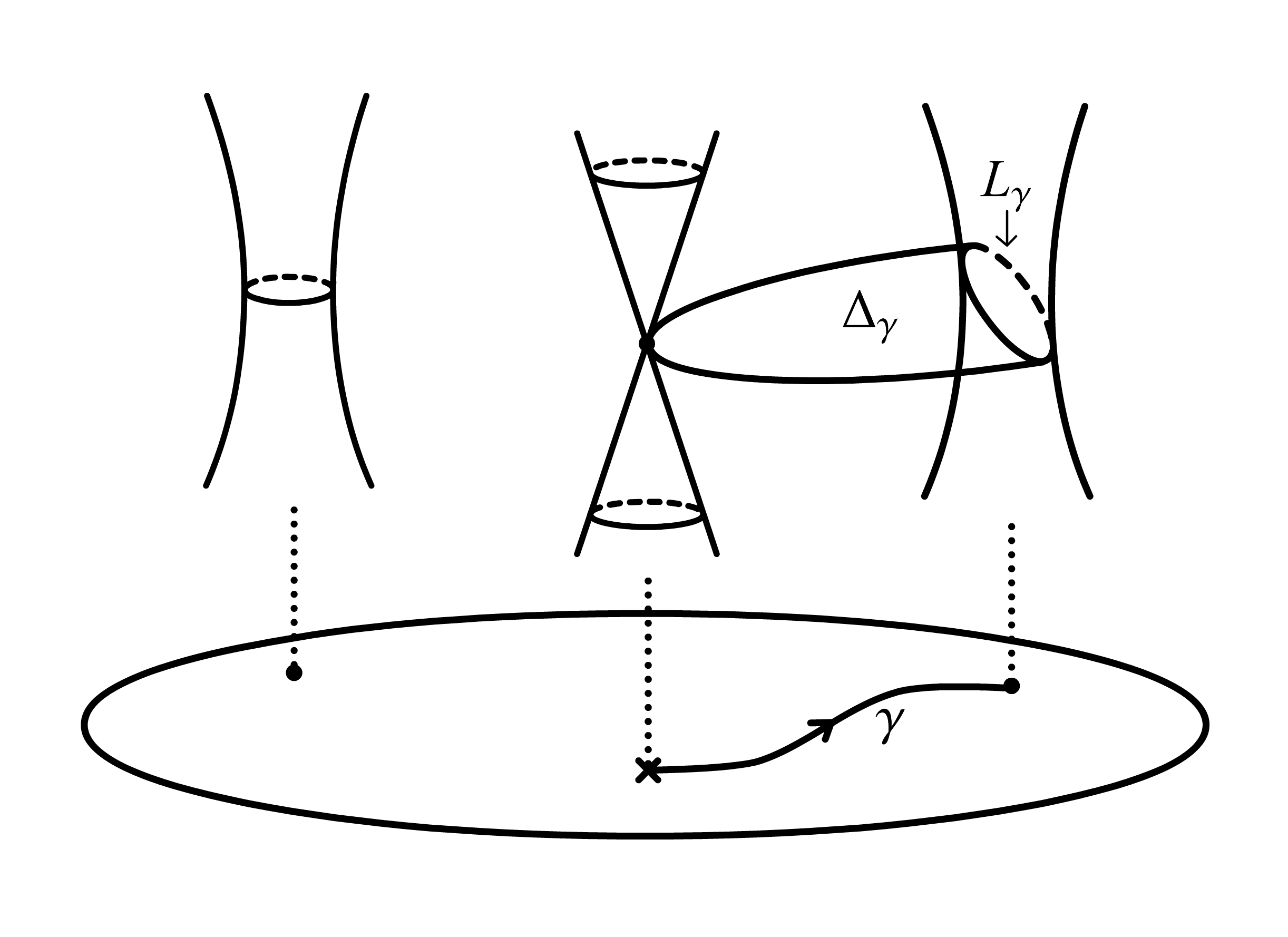}
\caption{rough picture of vanishing cycles}\label{fig:LefschetzFIbr}
\end{figure}

Thanks to the symplectic connection, we can define the concept of vanishing cycles as submanifolds of a fixed regular fibre, not as homological objects.

The limit in the definition of the Lefschetz thimble always converges.
This follows from the fact that the connection $HE$ can be definded on $E \setminus \text{Crit}(\pi )$ and 
the explicit formula of $\pi$ around the $\text{Crit}(\pi )$.
From the same reason, one can show that $\Delta _{\gamma}$ is a two-dimensional disc embedded in $E$ and $L_{\gamma}$ is an embedded $S^1$ in $M$.
For the proofs, we refer the reader to (16b) ``Vanishing cycles." of \cite{Se08}.

It is remarkable that the Lefschetz thimbles and vanishing cycles do not carry natural orientations. Hence, these two kinds of objects are the exception of the rule of this paper, ``all manifolds have fixed orientations", stated in the introduction.

\begin{rem}

We can define the vanishing cycles without the symplectic structure.
In the definition above, we defines the vanishing cycles by using symplectic form $\omega$, but what we really need is the connection on $\pi ^{\text{reg}}$ that behaves ``good" around the $\text{Crit}(\pi )$ so that $\Delta_\gamma$ and $L_\gamma$ are embedded disc and $S^1$.
Hence we can define the vanishing cycles if we have such a connection. For example take a metric $g$ on $E$ that coincides the natural metric induced by the complex coordinate around $\text{Crit}(\pi )$, and set $H_p E^{\text{reg}} \coloneqq \left( \ker \, (\pi _*)_y \right) ^{\perp g}$.
When we change the connection in a continuous manner, the  vanishing cycle moves continuously.
Moreover, since the space of connections on each vanishing path is contractible,
the free homotopy class of a vanishing cycle is independent of the choice
of the connection.
So, we use vanishing cycles for LFs to specify the singularities in the singular fibres.

As a matter of course, vanishing cycles which are defined without symplectic structure do not enjoy the symplectic properties, like Lemma \ref{lem:VCAndTheta}.

\end{rem}

\begin{lem}\label{lem:VCAndTheta}

A vanishing cycle $L_{\gamma}$ satisfies $\int _{L_{\gamma}} \theta = 0$.

\end{lem}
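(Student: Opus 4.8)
The plan is to apply Stokes' theorem to the Lefschetz thimble $\Delta_\gamma$. Since $\Delta_\gamma$ is a smooth embedded disc with $\partial\Delta_\gamma = L_\gamma$ and $d\theta = \omega$, we immediately obtain
\[
\int_{L_\gamma}\theta = \int_{\partial\Delta_\gamma}\theta = \int_{\Delta_\gamma}d\theta = \int_{\Delta_\gamma}\omega ,
\]
so it suffices to prove that $\omega$ restricts to zero on $\Delta_\gamma$, i.e. that the thimble is a Lagrangian (equivalently, isotropic) submanifold of $E$. Although $L_\gamma$ carries no natural orientation, the assertion $\int_{L_\gamma}\theta = 0$ is orientation-independent, so fixing any orientation on $\Delta_\gamma$ and the induced boundary orientation on $L_\gamma$ causes no trouble.

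To see that $\omega|_{\Delta_\gamma}=0$, I would analyse the tangent space $T_y\Delta_\gamma$ at a non-critical point $y\in\Delta_\gamma\cap E_{\gamma(s)}$ with $0<s\le 1$. First, the fibrewise slice $\Delta_\gamma\cap E_{\gamma(s)}$ is exactly the vanishing cycle of the truncated path $\gamma|_{[0,s]}$, a circle inside the fibre, so its tangent line is vertical and lies in $\ker\,(\pi_*)_y$. Second, by the functoriality of parallel transport, $\lim_{u\to 0}\tilde\gamma{}_t^u\bigl(\tilde\gamma{}_s^t(y)\bigr)=\lim_{u\to 0}\tilde\gamma{}_s^u(y)=p$, so the symplectic parallel transport along $\gamma$ preserves the thimble; hence the horizontal lift of $\gamma'(s)$, which spans $H_yE^{\text{reg}}$, is also tangent to $\Delta_\gamma$. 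These two directions are linearly independent and span the two-dimensional space $T_y\Delta_\gamma$. Finally, by the very definition $H_yE^{\text{reg}}=\left(\ker\,(\pi_*)_y\right)^{\perp \omega}$, the pairing of the vertical and horizontal generators under $\omega$ vanishes, so $\omega|_{T_y\Delta_\gamma}=0$. By continuity this extends over the critical point $p$, and in any case $\{p\}$ has measure zero in $\Delta_\gamma$.

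The main obstacle is this second step: verifying rigorously that the horizontal lift is genuinely tangent to $\Delta_\gamma$ and that the fibrewise slices are precisely the vanishing cycles of truncated paths, so that the claimed decomposition $T_y\Delta_\gamma = T_y\bigl(\Delta_\gamma\cap E_{\gamma(s)}\bigr)\oplus H_yE^{\text{reg}}$ really holds. This rests on the smoothness and embeddedness of $\Delta_\gamma$ and on the normal form $\pi(z_0,z_1)=z_0^2+z_1^2$ near $p$, both of which are recorded (with the reference to (16b) of \cite{Se08}) in the discussion preceding the lemma. Once these structural facts are in hand, the Lagrangian property, and hence the vanishing of the integral, follows as above.
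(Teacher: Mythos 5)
Your proof is correct and follows essentially the same route as the paper: both arguments reduce the claim to the fact that the Lefschetz thimble $\Delta_\gamma$ is a Lagrangian disc with boundary $L_\gamma$ (the paper cites (16b) of Seidel for this and then invokes the Poincar\'e lemma, which is just your Stokes computation in disguise), and your tangent-space sketch of the Lagrangian property is the standard one. One small wording slip: the horizontal lift of $\gamma'(s)$ spans only a line \emph{inside} the two-dimensional space $H_yE^{\text{reg}}$, so the decomposition should read $T_y\Delta_\gamma = T_y\bigl(\Delta_\gamma\cap E_{\gamma(s)}\bigr)\oplus \bigl(H_yE^{\text{reg}}\cap T_y\Delta_\gamma\bigr)$ rather than involving all of $H_yE^{\text{reg}}$; this does not affect the argument, since $\omega$-orthogonality of the vertical and horizontal directions is all that is used.
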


\begin{rem}\label{rem:VCDontHaveOri}

As we see before, there is no canonical orientation of a Lefschetz thimble
and a vanishing cycle. So, we have to give it some orientation before stating
the above lemma.
However, the orientation does not change the value of integration in this case, we didn't specify the orientation of the vanishing cycle.

\end{rem}

\begin{proof}

First, we can prove that $\Delta _{\gamma}$ is a Lagrangian submanifold
in $E$ as in (16b) ``Vanishing cycles." of \cite{Se08}.
So, we have that 
$d(\theta |_{\Delta _{\gamma}}) = \omega |_{\Delta _{\gamma}} = 0$.
By Poincar$\acute{\text{e}}$'s lemma, there exists $f \in C^{\infty}(\Delta _{\gamma})$ such that $df = \theta |_{\Delta _{\gamma}}$, hence we have $\int _{L_{\gamma}} \theta = 0$.

\hfill $\Box$

\end{proof}

We call such a Lagrangian submanifold an \textit{exact Lagrangian submanifold}.

A singular fibre of a Lefschetz fibration has a shape that the corresponding vanishing cycle is collapsed into a single point as in Figure 2.
Since $D$ is contractible, the singular fibres have whole information of a Lefschetz fibration.
Next, let us gather the information to one regular fibre $M$ to study the Lefschetz fibration.

\begin{defn}\label{df:VCs}

Fix a point $* \in \partial D \subset D^{\text{reg}}$ and set $M \coloneqq E_*$.
For all critical values, we take vanishing paths, and call them $\gamma _1 , \dots , \gamma _N$,
where $N$ is the number of the critical points, $N = \# \text{Crit}(\pi )$.
We impose the following conditions to the vanishing paths: $\gamma _i '(0) \notin T_* (\partial D)$, if $i \neq j$ then $\gamma _i([0, 1]) \cap \gamma _j ([0, 1]) = \{ * \}$, and $\gamma _i '(0)$ and $\gamma _j'(0)$ are not parallel, and $\gamma _i'(0)$'s form clockwise order. 
For such $\gamma$'s we set $\boldsymbol{\gamma} \coloneqq (\gamma _1 , \dots , \gamma _N )$ and we call it a {\it distinguished basis of vanishing paths}.
Set $L_i \coloneqq L_{\gamma _i} \subset M$, and $\boldsymbol{L}_{\boldsymbol{\gamma}} \coloneqq (L_1, \dots , L_N)$. We call $\boldsymbol{L}_{\boldsymbol{\gamma}}$ an {\it associated distinguished basis of vanishing cycles}.

\end{defn}

\begin{rem}\label{rem:AmbiguityOfVC}

Suppose that we deform the $\boldsymbol{\gamma}$ under keeping the condition that they form a distinguished basis of vanishing paths and obtain $\boldsymbol{\gamma }'$.
We write the vanishing cycles associated to $\boldsymbol{\gamma}$ by $L'_i \coloneqq L_{\gamma '_i}$.
By lemma \ref{lem:HamMove}, there exists $\phi_i \in \text{Ham}(M, \partial M)$ such that 
$\phi _i (L'_i ) = L_i$.

\end{rem}

\begin{rem}

When we change the ``isotopy class of distinguished basis of vanishing paths", the vanishing cycles is changed in terms of the Hurwicz moves, which are defined by using Dehn twists.
This is a very important consequence to prove the well-definedness of the derived Fukaya-Seidel categories. For the detail, please refer section 16 of \cite{Se08}.
\end{rem}

\subsection{Fukaya-Seidel categories and its derived categories}\label{subsec:FSCat}

We will define the Fukaya-Seidel categories step by step.
For an exact Lefschetz Fibration $\pi \colon E \to D$, to define the Fukaya-Seidel category $\mathcal{F}(\pi)^{\to}$, since it is an $A_{\infty}$-category, we have to define the set of objects, the vector spaces of morphisms, their $\mathbb{Z}$-gradings, and $\mu$'s.
We discuss them one by one.

As in the last subsection, we write a regular fibre over $* \in \partial D$ by $M = E_*$.

\subsubsection{Objects}\label{subsubsec:Obj}

Firstly,　 the objects are almost the vanishing cycles $L_1, \dots \, , L_n \subset M$ associated with some distinguished basis of vanishing cycles in Definition \ref{df:VCs}.
However, this is not a concrete definition. We need some modifications related with gradings and to obtain Lagrangian branes $L_1^{\#}, \dots , L_N^{\#}$.
The set of objects is $Ob(\mathcal{F}(\pi)^{\to}) = \{ L_1^{\#}, \dots , L_N^{\#} \}$.
In fact, we define $\mathcal{F}(\pi)^{\to}$ to be isomorphic to $Fuk(M)(L_1^{\#}, \dots , L_N^{\#})^{\to}$ with the notation in section \ref{sec:AlgPre1}.
Here $Fuk(M)$ is the Fukaya category of an exact symplectic manifold, whose definition
can be found in \cite{Se08}.
We will present the concrete definition in the following arguments.

\subsubsection{Morphisms}\label{subsubsec:Mor}

In this subsubsection, our goal is to define $hom_{\mathcal{F}(\pi)^{\to}}(L_i^{\#}, L_j^{\#})$ as a vector space (not as a graded vector space).

First, we restrict the configuration of exact Lagrangian $S^1$'s.
Let $M$ be a two dimensional exact symplectic manifold and $L_1, \dots L_N
\subset M$ be exact Lagrangian submanifolds. 
We say $\boldsymbol{L} \coloneqq (L_1, \dots , L_N )$ is in {\it general
position} if $L_i$'s are contained in $\mathring{M}$, the intersections
of $L_i$'s are all transitive, and there is no triple points. 
From now on, 
we always assume that any collection of exact Lagrangian submanifolds $\boldsymbol{L}$
in \(M\) is in general position.

\begin{defn}\label{df:PrCF}

For a collection of exact Lagrangian submanifolds $\boldsymbol{L} = (L_1,
\dots , L_N )$ we define the Floer cochain complex $CF(L_i , L_j)$ by \(\displaystyle \bigoplus _{p \in L_i \cap L_j} k \cdot [p]\), and the hom  space by
\begin{eqnarray*}
hom_{\mathcal{F}(\boldsymbol{L})^{\to}}(L_i ^{\#}, L_j^{\#})  \coloneqq
\begin{cases}
CF(L_i , L_j) & ( i < j ) \\
k \cdot e_i & ( i = j ) \\
0 & ( i > j )
\end{cases}
\end{eqnarray*}

as vector spaces, where $[p]$ and $e_i$ are formal symbols.

\end{defn}

In the sequel, we merely write $p$ for $[p]$ when no confusions can occur.

\subsubsection{Gradings}\label{subsubsec:Grading}

In this subsubsection, we give $\mathbb{Z}$-gradings to the hom spaces.

First, we prepare some concepts to define the grading.
The regular fibre $M$ of $\pi$ is diffeomorphic
to $\Sigma _{g, k}$ with some $g \geq 0$ and $k \geq 1$. We see its tangent bundle $TM$ as a complex line bundle over $M$, so
we can show that it is trivial since $H^2 (M) = 0$.
Now we fix a trivialization.
When we fix a complex structure of \(M\), there is the canonical one-to-one
correspondence between trivializations of $TM$ and non-vanishing vector fields
$X$ on $M$. 
So we now fix a complex structure of \(M\) and a non-vanishing vector field
\(X\) as a trivialization of \(TM\).

For an oriented submanifold $L \subset M$
diffeomorphic to $S^1$, we define the {\it wrihte} $w(L)$ as follows. 
Choose a map $k : S^1 \to M$ whose image coincides with $L$. 
Now we see $S^1$ as $\mathbb{R} / \mathbb{Z}$.
Then, there exists a real number $a \in \mathbb{R}$ satisfying $\mathbb{R}_{>0} \, k'(0) = exp(2 \pi ia)\mathbb{R}_{>0} \, X_{k(0)} \subset T_{K(0)}M$, and extend it to a
function $a : [0,1] \to \mathbb{R}$ satisfying that $a(0) = a$ and $\mathbb{R}_{>0}
\, k'(t) = \mathbb{R}_{>0} exp(2 \pi ia(t)) \, X_{k(t)} \subset T_{k(t)}M$.
Finally, we set $w(L) \coloneqq a(1) - a(0)$.

\begin{defn}\label{df:UnobstructedLagrangianSubmfd}

A (not necessarily oriented) submanifold $L \subset M$ diffeomorphic to $S^1$ is called {\it unobstructed with respect to a trivialization $X$}
if $w(L) = 0$ for some (hence both) orientation(s) of $L$.

\end{defn}

Now, the next lemma holds.

\begin{lem}\label{lem:ExistGoodTriv}

If the two-fold first Chern class vanishes, i.e. $2c_1(E) = 0 \in H^2(E; \mathbb{Z})$, then there exists a trivialization $X$ of $TM$ such that all vanishing cycles are unobstructed.

\end{lem}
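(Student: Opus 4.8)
The plan is to fix one reference trivialization and reduce the statement to a single cohomological condition. Since $M\cong\Sigma_{g,k}$ with $k\geq 1$ is homotopy equivalent to a wedge of circles, $H^2(M;\mathbb{Z})=0$ and $H_1(M;\mathbb{Z})$ is free abelian; in particular $TM$ is trivial and, by universal coefficients, $H^1(M;\mathbb{Z})\cong\mathrm{Hom}(H_1(M;\mathbb{Z}),\mathbb{Z})$. First I would pick an arbitrary non-vanishing vector field $X_0$ and record the writhes $w_0(L_i)$ (after choosing auxiliary orientations of the $L_i$, which by Remark \ref{rem:VCDontHaveOri} is harmless). Homotopy classes of trivializations form a torsor over $H^1(M;\mathbb{Z})$, and replacing $X_0$ by the trivialization twisted by a class $c$ changes the writhe by $w(L_i)=w_0(L_i)-\langle c,[L_i]\rangle$. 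Hence it suffices to produce $c\in\mathrm{Hom}(H_1(M;\mathbb{Z}),\mathbb{Z})$ with $\langle c,[L_i]\rangle=w_0(L_i)$ for every $i$; the vector field induced by $c$ then makes every vanishing cycle unobstructed in the sense of Definition \ref{df:UnobstructedLagrangianSubmfd}.

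Because $H_1(M;\mathbb{Z})$ is free, such a $c$ exists once the assignment $[L_i]\mapsto w_0(L_i)$ is a homomorphism on the subgroup the $[L_i]$ generate; the heart of the matter is therefore relation-invariance: for every integral relation $\sum_i n_i[L_i]=0$ one must have $\sum_i n_i w_0(L_i)=0$. This is where $2c_1(E)=0$ enters. Given such a relation I would pick a $2$-chain $S\subset M$ with $\partial S=\sum_i n_i L_i$ and cap each vanishing cycle with its Lefschetz thimble from Definition \ref{df:VC}, so that $Z:=\sum_i n_i\Delta_{\gamma_i}-S$ is a closed $2$-cycle in $E$. Using the splitting $TE|_M=\ker\pi_*\oplus HE$ over the fibre (trivialized by $X_0$ on the vertical factor and, on the horizontal factor, by the horizontal lift of a fixed vector in $T_*D$), the relative first Chern number of $TE$ over $S$ vanishes, since this frame extends over all of $M\supset S$, while over each thimble it equals $\pm w_0(L_i)$: the tangent plane of $\Delta_{\gamma_i}$ along $L_i$ is spanned by $TL_i$ together with an essentially horizontal inward direction, so comparing the thimble framing with $(X_0,\text{horizontal})$ records precisely the winding of $k_i'(t)$ against $X_0$. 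Thus $\sum_i n_i w_0(L_i)=\pm\langle c_1(E),[Z]\rangle$, and since $2c_1(E)=0$ forces $c_1(E)$ to be torsion, it pairs trivially with the integral class $[Z]$ and the right-hand side vanishes.

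With relation-invariance established, $[L_i]\mapsto w_0(L_i)$ descends to a homomorphism on $G:=\langle[L_1],\dots,[L_N]\rangle\leq H_1(M;\mathbb{Z})$, which I would then extend to a class $c\in H^1(M;\mathbb{Z})$; here the embeddedness of vanishing cycles is useful, since a simple closed curve in an orientable surface is either null-homologous or primitive, and this controls the divisibilities one must verify when extending across the saturation of $G$. In the separating case $[L_i]=0$ the same capping argument applied to the relation $1\cdot[L_i]=0$ already forces $w_0(L_i)=0$, so those cycles impose no condition. I expect the main obstacle to be the geometric identification used above — that the relative Chern number of $TE$ over a thimble equals, up to sign, the writhe of its boundary — which requires a careful local model near $\partial\Delta_{\gamma_i}$ and near $\mathrm{Crit}(\pi)$ to show that the horizontal contribution does not wind while the vertical one reproduces exactly the writhe, with the orientation bookkeeping handled uniformly in $i$. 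The same local analysis, rephrased through Seidel's grading structures and the trivialization of $K_E^{\otimes 2}$ furnished by $2c_1(E)=0$, yields an equivalent but more conceptual route: the thimbles, being discs, are canonically graded, and restricting the grading structure to $M$ grades each $L_i$, which is exactly unobstructedness.
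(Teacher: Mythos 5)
Your reduction is set up correctly until the point where you must realize the prescribed writhes by a single class $c\in H^1(M;\mathbb{Z})$, but the step ``such a $c$ exists once $[L_i]\mapsto w_0(L_i)$ is a homomorphism on the subgroup $G$ the $[L_i]$ generate'' is where the argument breaks. A homomorphism $G\to\mathbb{Z}$ from a subgroup of a free abelian group need not extend to the ambient group; the obstruction lives in $\mathrm{Ext}^1\bigl(H_1(M;\mathbb{Z})/G,\mathbb{Z}\bigr)$, i.e.\ in the torsion of $H_1(E;\mathbb{Z})$, which is nonzero in general (the paper's own $\pi_3$ has $H_1(E_3)\cong\mathbb{Z}^3\oplus\mathbb{Z}/2$). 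Your relation-invariance argument via capping with thimbles only shows that the vector $(w_0(L_i))_i$ lies in the \emph{saturation} of the image of $H^1(M;\mathbb{Z})\to\mathbb{Z}^N$, $c\mapsto(\langle c,[L_i]\rangle)_i$: pairing the torsion class $c_1(E)$ against integral $2$-cycles can only ever detect the free part of the obstruction. Concretely, take $M=\Sigma_{1,1}$ with $H_1=\mathbb{Z}a\oplus\mathbb{Z}b$ and two vanishing cycles in the primitive classes $a$ and $a+3b$. There are no relations and $H_2(E)=0$, so your capping argument is vacuous; yet $H^2(E;\mathbb{Z})\cong\mathbb{Z}/3$ and the condition you need is the nontrivial congruence $w_0(L_2)\equiv w_0(L_1)\pmod 3$. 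The appeal to ``a simple closed curve is null-homologous or primitive'' does not repair this: primitivity of each $[L_i]$ makes each cyclic subgroup saturated, but not the subgroup they jointly generate.

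The fix is to keep the obstruction integral throughout: under the cellular identification $H^2(E;\mathbb{Z})\cong\mathbb{Z}^N/\mathrm{im}\bigl(H^1(M;\mathbb{Z})\to\mathbb{Z}^N\bigr)$ (valid because $E$ is homotopy equivalent to $M$ with $N$ two-cells attached along the $L_i$), one must show that the class of $(w_0(L_i))_i$ equals $\pm 2c_1(E)$ (equivalently, $c_1$ of Seidel's relative bundle $\mathcal{K}_{E/D}^2$) as an element of $H^2(E;\mathbb{Z})$, torsion included; then $2c_1(E)=0$ kills it on the nose. This is essentially what your closing sentence sketches, and it is also what the paper does: the paper gives no argument of its own for this lemma but defers to (12a) and (15c) of Seidel, where $2c_1(E)=0$ trivializes $\mathcal{K}_{E/D}^2$ globally, the thimbles are graded because they are discs, and restriction to $\partial\Delta_{\gamma_i}=L_i$ gives unobstructedness. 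If you want a self-contained proof, that route (or the integral obstruction-class computation above) is the one to write out; the pairing argument alone cannot close the torsion gap. A smaller point to watch in either version: the paper's $X$ is an honest vector field while Seidel's structure is a trivialization of $(T^*M)^{\otimes 2}$, so a final square-root/parity check is needed to pass between the two notions of writhe.
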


The above lemma is discussed in (12a) and (15c) of \cite{Se08}. 
Since only the existence is essential, we won't specify the trivialization $X$. 
We fix such a trivialization $X$ and we say $L$ is unobstructed when $L$ is unobstructed with respect to $X$.

\begin{defn}[Lagrangian brane]\label{df:LagrangeBrane}

Let $L$ be an unobstructed Lagrangian submanifold.
By the unobstructedness,  there exists a function $\alpha \colon L \to \mathbb{R}$ such that $T_y L = \exp (\pi i \alpha (y) \, ) (\mathbb{R} X_y) \subset T_y M$ holds for all $y \in L$.
We call this function $\alpha$ a {\it brane structure} or {\it a grading}
of $L$. We call a triple $L^{\#} \coloneqq (L, \alpha , p)$ {\it
Lagrangian brane}, where $p$ is an arbitrary point in $L$. We call the point $p$ a {\it switching point of local trivialization of Pin structure}, and we call
$L$  the {\it underlying space} of $L^{\#}$.

\end{defn}

\begin{rem}\label{rem:AmbiguityOfBraneStr}

There are few remarks about the brane structure.
(i) Let $\alpha$ be a brane structure of $L$. For an integer $n \in \mathbb{Z}$,
we set $\alpha [n] (y) \coloneqq \alpha (y) - n$. This $\alpha [n]$ again
defines a brane structure.
This corresponds to the shift in the Fukaya category. In this manner, the brane structure has the ambiguity of $\mathbb{Z}$.
(ii) In general, a vanishing cycle doesn't have an orientation. However,
when we give it a brane structure, we can define its orientation by $\exp (\pi
i \alpha ) X$.
This orientation is called an {\it orientation of the brane}.
(iii) In the original notion of the Lagrangian brane in (12a) ``Lagrangian branes"
of \cite{Se08}, a Lagrangian brane $L^{\#}$ is defined as a triple $(L, \alpha
^{\#}, P^{\#})$ where $P^{\#}$ is a Pin structure of $L$ (in our case, it must be the non-trivial one).
The Pin structure is used to define a real line bundle over $L$ and this real
line bundle is used to define the sign in the definition of $\mu$'s.
However, to define the Fukaya-Seidel categories, what we need is just the
real line bundle. In our case, the line bundle must be the non-trivial line bundle
over $S^1$, the M\"{o}bius' band.
Furthermore, to define the categories in a combinatorial way, what we need is just fixing
a local trivialization of the line bundle. Based on the above discussion,
the point $p$ indicates that the only point which is not contained in the
region that the bundle is trivialised, i.e. we consider the trivialization
on $L \setminus \{ p \} \subset L$. Equivalently, if we go through
the point $p$, then the orientation of the fibre of the real line bundle is reversed.

\end{rem}

Now we fix brane structures $\alpha _i$ and switching points $p_i$ for $L_i$
to obtain Lagrangian branes $L_i^{\#}$.
Thus we obtain a collection of Lagrangian branes $\boldsymbol{L}^{\#} \coloneqq (L^{\#}_1, L^{\#}_2, \dots , L^{\#}_N )$.
We say $\boldsymbol{L}^{\#}$ is in {\it general position} when the underlying spaces of Lagrangian branes are in general position and $p_i$ is not contained in the underlying space of other Lagrangian branes, namely, if $i \neq j$, then $p_i \notin L_j$.
We always assume that the collection Lagrangian branes are in general position.

\begin{defn}\label{df:CF}

Let $\boldsymbol{L}^{\#} \coloneqq (L^{\#}_1, L^{\#}_2, \dots , L^{\#}_N
)$ be a collection of Lagrangian branes.
We will define $CF^*(L^{\#}_i, L^{\#}_j)$ and $hom_{\mathcal{F}(\pi )^{\to}} (L^{\#}_i, L^{\#}_j)$ as follows.

First, we define the index of $p \in L_i \cap L_j$ by $i(p) \coloneqq [ \alpha
_j (p) - \alpha _i (p) ] + 1$.
Here $[x]$ for a real number $x$ is the largest integer that is smaller than or equal to $x$.
We set $\displaystyle CF^d(L^{\#}_i, L^{\#}_j) \coloneqq \bigoplus _{p \in
L_i \cap L_j, i(p) = d} k \cdot [p]$. Finally, we define 

\begin{eqnarray*}
hom^d_{\mathcal{F}(\boldsymbol{L}^{\#})^{\to}} (L^{\#}_i, L^{\#}_j) \coloneqq 
\begin{cases}
CF^d(L^{\#}_i, L^{\#}_j) &(i < j )\\
k \cdot e_i  & (i = j \, \text{and} \, k= 0 )\\
0 &(\text{otherwise}).
\end{cases}
\end{eqnarray*}

\end{defn}

\subsubsection{$\mu$'s}\label{subsubsec:mu-s}

Next, we will define the $A_{\infty}$-higher composition maps $\{ \mu ^d \} _{d
\geq 1}$.
Our first goal is to define $\mu ^1$ and show $(\mu ^1)^2 = 0$.

We first define some moduli spaces.
Let $\Delta ^2$ denote the upper half closed unit disc i.e. $\Delta ^2 = D_+ \coloneqq \{ z \in \mathbb{C} \mid |z| \leq 1 , \text{Im}(z) \geq 0\}$.
For $i < j$ and $p, q \in L_i \cap L_j$, we define a set $\widetilde{\mathcal{M}}^2 (p ; q)$ consists of orientation preserving immersions $u \colon \Delta ^2 \to M$ satisfying the following conditions:
\begin{enumerate}
\item $u(-1) = q , u(1) = p$,
\item $u( [-1, 1] ) \subset L_i , u(S^1 \cap D_+) \subset L_j$.
\end{enumerate}
Here, $S^1$ is considered as a subset of $\mathbb{C}$, $S^1 = \{ z \in \mathbb{C}
\mid |z| = 1 \}$.

This space $\widetilde{\mathcal{M}}^2 (p ; q)$ has the natural action of
the group of the diffeomorphism of $\Delta ^2$ that fixes the corner points
and the orientation.
The quotient space of this group action will be denoted by $\mathcal{M}^2 (p ; q)$.

The moduli space $\mathcal{M}^2 (p ; q)$ becomes a set of bigons in $M$ like in Figure \ref{fig:bigon}.
Hence, $\mathcal{M}^2 (p ; q)$ is a 0-dimensional space.
For the rigorous proof we refer the reader to (13b) ``Conbinatorial Floer
cohomology" of \cite{Se08}
(Seidel writes $\widetilde{\mathcal{M}}^2$ in this paper by $\text{Imm}^2$.)
In fact, $\mathcal{M}^2(p; q)$ is a compact space, hence a finite set.
The compactness of this space is a consequence of energy estimate in (8g)
``Energy" \cite{Se08}, this is proved by the argument of Gromov's compactness.

\begin{figure}[hbt]
\centering
\includegraphics[width=8cm]{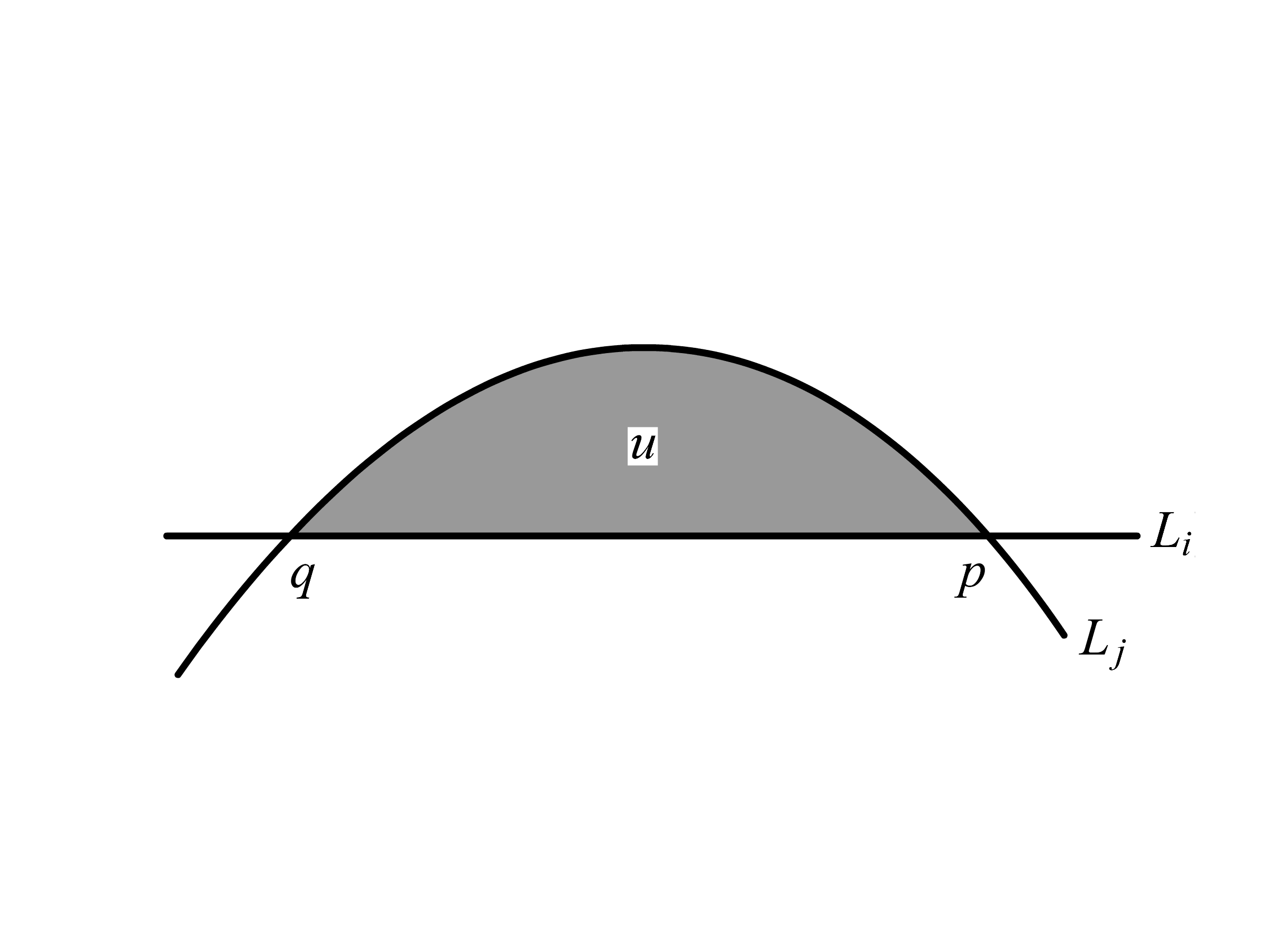}
\caption{$u \in \mathcal{M}^2(p; q)$}\label{fig:bigon}
\end{figure}

We equip a sign $(-1)^{s(u)} \in \{ \pm 1 \}$ for each $u \in \mathcal{M}^2(p; q)$ as follows.
First, we equip vertices of $u(\Delta ^2)$ with $\pm 1$. 
If the brane orientation of $L_j^{\#}$ coincides with the orientation of
that induced from $u(\partial \Delta ^2)$, then we equip them with $+1$.
If the above two orientations do not coincide, we equip $p$ with $(-1)^{i(p)}$
and $q$ with $(-1)^{i(q)}$.
Next, we equip edges with $\pm 1$. For each image of edge of $\Delta ^2$
by $u$, we equip it with $-1$ if the image contains $p_i$ or $p_j$, otherwise
we equip it with $+1$. 
Finally, we define $(-1)^{s(u)}$ by product of all the $\pm 1$ equipped to vertices
and edges.

\begin{defn}\label{df:Mu1Pr}

We define $\mu ^1 \colon CF^*(L_i^{\#}, L_j^{\#}) \to CF^*(L_i^{\#}, L_j^{\#})$ by the following formula for $i < j$.
For $p \in L_i  \cap L_j$, we set
\[\mu ^1 ([p]) \coloneqq \sum_{ \substack{q \in L_i \cap L_j \\ [u] \in \mathcal{M}^2(p; q)}} (-1)^{s(u)}  [q] .\]

\end{defn}

\begin{lem}\label{lem:mu11square}

$(\mu ^1)^2 = 0$.

\end{lem}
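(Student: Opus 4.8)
The plan is to establish $(\mu^1)^2 = 0$ by a combinatorial gluing-and-cancellation argument on the moduli spaces of bigons, mirroring the standard proof that Floer differentials square to zero, but carried out entirely in the two-dimensional combinatorial setting where ``bigons'' are honest immersed half-discs. The key object to study is, for fixed $p, r \in L_i \cap L_j$, the coefficient of $[r]$ in $(\mu^1)^2([p])$, which is the signed count
\[
\sum_{q \in L_i \cap L_j} \sum_{\substack{[u] \in \mathcal{M}^2(p;q) \\ [v] \in \mathcal{M}^2(q;r)}} (-1)^{s(u)+s(v)}.
\]
I would show this vanishes by exhibiting a fixed-point-free involution on the set of such composable pairs $(u,v)$ that reverses the total sign $(-1)^{s(u)+s(v)}$, so the contributions cancel in pairs.

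First I would identify the geometric source of the involution. A composable pair $(u,v)$ with $u$ a bigon from $q$ to $p$ and $v$ a bigon from $r$ to $q$ shares the common vertex $q$; concatenating them produces an immersed region in $M$ bounded by arcs of $L_i$ and $L_j$ with corners at $p, q, r$. The strategy is to reglue this region the ``other way'': the boundary of the combined figure can be decomposed into two bigons in a second way, producing a new composable pair $(u',v')$ passing through a different intermediate intersection point $q'$. This is exactly the one-dimensional boundary phenomenon—here the relevant moduli space of immersed strips from $p$ to $r$ is a one-manifold whose boundary points are precisely the broken configurations $(u,v)$, and the two ends of each such one-dimensional component pair up the broken configurations. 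I would make this precise by invoking the compactness and boundary structure of the combinatorial moduli spaces from (13b) and (8g) of \cite{Se08}, so that the pairing is genuinely a free involution.

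The second, and more delicate, step is the sign computation: I must verify that the involution reverses $(-1)^{s(u)+s(v)}$. This requires tracking how the vertex signs (depending on $i(p), i(q), i(r)$ and on agreement of brane orientations) and the edge signs (depending on whether edges pass through the switching points $p_i, p_j$) transform under the regluing. The switching-point data is the subtle part: when the decomposition of the combined region changes, an edge carrying a switching point may be split or recombined, and I would check that each $p_i$ (resp. $p_j$) is counted with a consistent parity across the two decompositions so that the $-1$ contributions reorganize correctly. The vertex at the old intermediate point $q$ and the new one at $q'$ contribute $(-1)^{i(q)}$-type factors whose difference accounts for the overall sign flip.

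\textbf{Main obstacle.} The hard part will be the sign bookkeeping, not the geometry. Showing the involution exists and is fixed-point-free is a clean consequence of the combinatorial structure of bigons on a surface, but verifying that it is \emph{sign-reversing} demands a careful case analysis over the relative brane orientations at $p, q, r$ and the placement of the switching points $p_i, p_j$ along the boundary arcs. I expect the cleanest route is to reduce the global computation to a local model near the intermediate vertex, matching the conventions fixed in the definition of $(-1)^{s(u)}$ above, and to confirm consistency with the sign conventions of \cite{Se08}; one must also separately dispose of the degenerate contributions where $q$ coincides with $p$ or $r$, using the directedness ($i < j$) and the unit morphisms $e_i$ to rule out spurious terms.
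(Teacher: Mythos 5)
Your proposal is correct and follows essentially the same route as the paper: the paper also pairs each composable pair of bigons through an intermediate point $q$ with the second decomposition of the concatenated region (a polygon with one obtuse angle) through another intersection point $q'$, whose existence follows from $L_j$ being embedded, and cancels the two contributions by a sign check. The only cosmetic difference is that the paper explicitly works with the obtuse-angled polygon in place of the one-dimensional moduli space language, and defers the sign verification to the later, more general proof of the $A_\infty$-relations, exactly as you anticipate in your ``main obstacle'' paragraph.
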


\begin{proof}

This statement immediately follows from the general theory of Fukaya categories of
exact symplectic manifolds in \cite{Se08}, but there is an elementary proof,
so we develop that one.
This proof is the same as that of Lemma 2.11 in \cite{Ab08} except for the sign.

In our case, we consider polygons with an obtuse angle instead of one-dimensional moduli spaces.
We take two points $p, r \in L_i \cap L_j$ and consider the coefficient of
$r$ of $\mu ^1 ( \mu ^1 (p) \, )$. 
If the coefficient is nonzero, then there must exist $q \in L_i \cap L_j$ and the
situation is, for example, like an upper-left figure of Figure \ref{fig:mu^2}.
Since $L_j$ is an embedded curve, there is a point $q' \in L_i \cap L_j$
of the upper-right figure of Figure \ref{fig:mu^2}.

\begin{figure}[hbt]
\centering
\includegraphics[width=8cm]{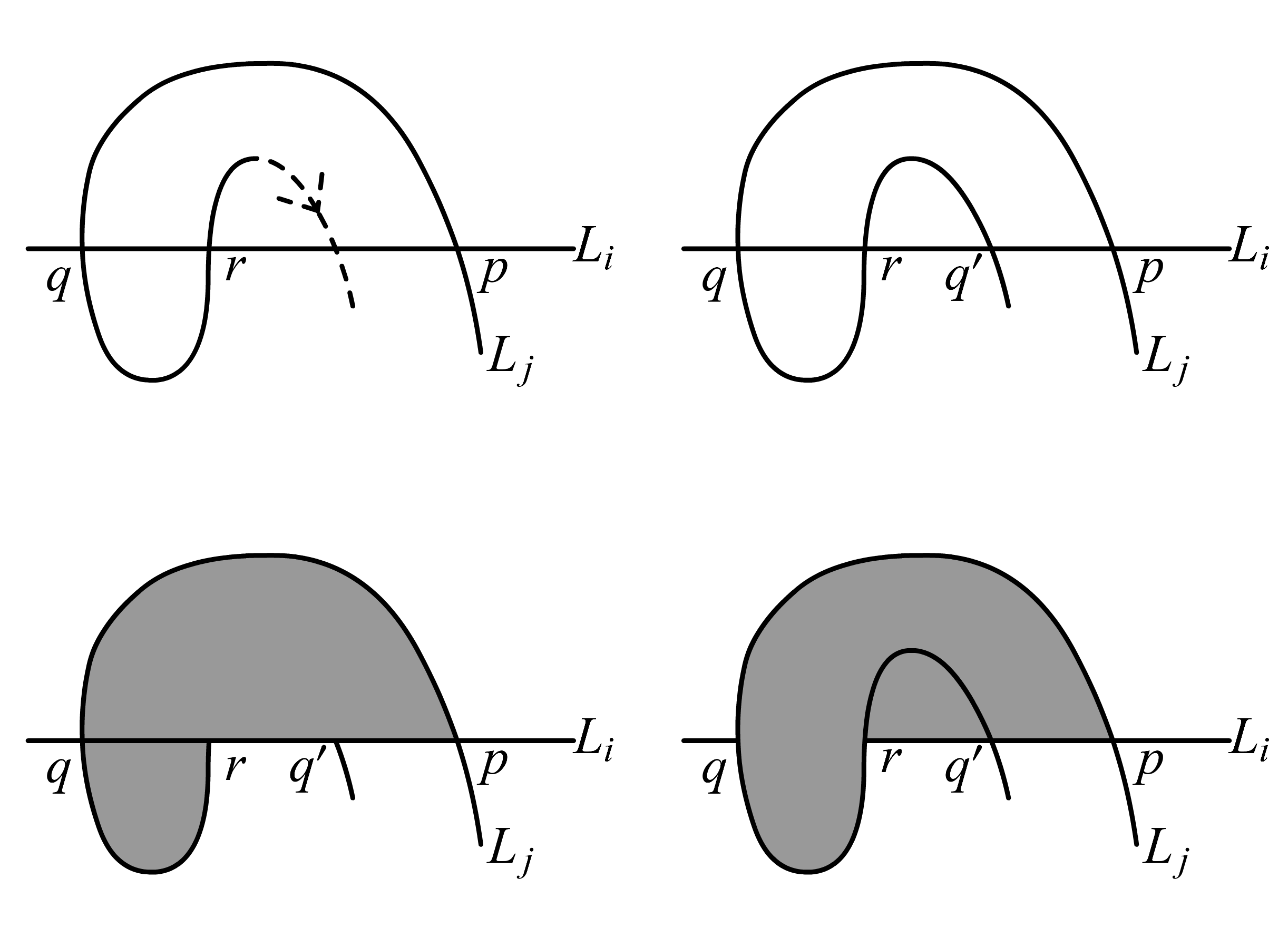}
\caption{}\label{fig:mu^2}
\end{figure}

Now, the two lower figures in Figure \ref{fig:mu^2} indicate that this region
contributes twice to the coefficient of $r$ of $\mu ^1 ( \mu ^1 (p) \, )$.
In fact, the sign of these two are different each other (we prove this later in more general settings), the contribution of Figure \ref{fig:mu^2} vanishes.
This is the case when $p, q, q', r$ are in other positions.
In this manner, immersions that contribute to the coefficient of $\mu ^1
( \mu  ^1 (p) \, )$ must occur in a pair and cancel each other, hence we obtain $(\mu ^1)^2 = 0$.
\hfill $\Box$

\end{proof}

Let us define all the other $\mu$'s.
Fix a degree $d \geq 2$. Let $i_0, i_1, \dots , i_d$ be integers satisfying $1 \leq i_0 < i_1 < \cdots
< i_d \leq N$ and 
$y_k$ a point in $L_{i_{k-1}} \cap L_{i_k}$ for $1 \leq k \leq d$, and $y_0
\in L_{i_0} \cap L_{i_d}$.
We define $\mathcal{M}^{d+1} (y_d, y_{d-1} , \dots , y_1; y_0)$ as
follows.
Let $\Delta ^{d+1}$ denote a $(d+1)$-gon. We name the vertices $v_0, v_1,
\dots , v_d$ counterclockwise, the edge connecting $v_{k-1}$ and $v_k$ by $[v_{k-1}, v_k]$ for $1 \leq k \leq d$, and the edge connecting
$v_0$ and $v_d$ by $[v_0, v_d]$.
We consider a set $\widetilde{\mathcal{M}}^{d+1} (y_d , \dots , y_1;
y_0)$ consists of orientation preserving immersions $u \colon \Delta ^{d+1}
\to M$ that satisfy the following conditions:
\begin{enumerate}
\item $u(v_k) = y_k$ for $0 \leq k \leq d$, \item $u( [v_{k-1}, v_k] ) \subset L_{i_{k-1}}$ for $1 \leq k \leq d$, and
$u( [v_0, v_d] ) \subset L_{i_d}$.
\end{enumerate}

This space $\widetilde{\mathcal{M}}^{d+1} (y_d, \dots , y_1 ; y_0)$ has the
natural action of the group of diffeomorphisms of $\Delta ^{d+1}$ that fixes
the vertices (the orientation is automatically fixed).
Let  $\mathcal{M}^{d+1} (y_d, \dots , y_1 ; y_0)$ denote the quotient space
of the group action.
Then, $\mathcal{M}^{d+1} (y_d, \dots , y_1 ; y_0)$ becomes a finite set of $(d+1)$-gons
in Figure \ref{fig:d+1gon} like the case of $\mathcal{M}^2$.

\begin{figure}[hbt]
\centering
\includegraphics[width=8cm]{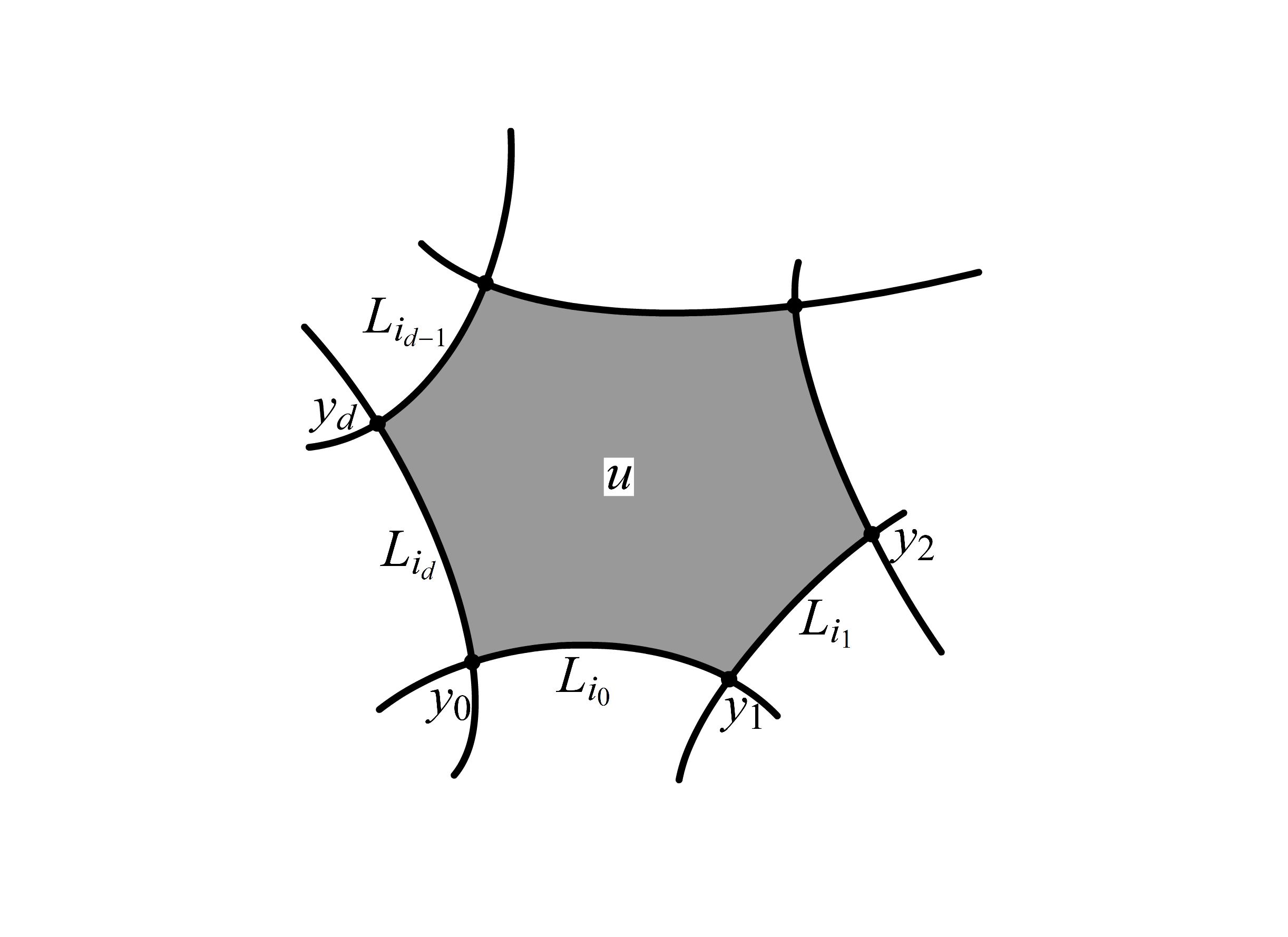}
\caption{}\label{fig:d+1gon}
\end{figure}

Next, we define the sign $(-1)^{s(u)} \in \{ \pm 1\}$ for $u \in \mathcal{M}^{d+1} (y_d, \dots , y_1 ; y_0)$.
First, we equip vertices with $\pm 1$. For a vertex $y_k$ with $1\leq k \leq
d$, we equip $y_k$ with $-1$ when the orientation of $L_{i_k}$ induced by
$\partial u$ and the orientation of the brane $L^{\#}_{i_k}$ do not coincide
and $i(y_k)$ is odd, otherwise equip $y_k$ with $+1$.
For the case of $y_0$, we equip $y_0$ with $-1$ when the orientation of $L_{i_d}$
induced by $\partial u$ and the orientation of the brane $L^{\#}_{i_d}$ do
not coincide and $i(y_0)$ is odd, otherwise equip $y_0$ with $+1$.
Next, we equip edges with $\pm 1$. For each image of the edge of $\Delta ^{d+1}$
by $u$, we equip it with $-1$ if the image contains $p_{i_k}$, otherwise we equip
it with $+1$. 
Now, we define $(-1)^{s(u)}$ by product of all the $\pm 1$ equipped to vertices
and edges.

\begin{rem}\label{rem:DifferenceBetweenAbouzaid}

The sign $(-1)^{s(u)}$ in \cite{Ab08} is just a product of $\pm 1$ corresponding
to the vertices only.
This difference comes from the different choices of the Pin structures of Lagrangian
branes, namely Abouzaid uses the trivial Pin structure of $S^1$ while
we use the non-trivial one.
The reason why we use non-trivial one is to apply Theorem 17.16 in \cite{Se08}, ``equivalence" of algebraic twist and geometric (Dehn) twist, and to obtain the well-definedness
of the derived Fukaya-Seidel categories.
More details can be fond in section 17 of \cite{Se08}.

\end{rem}

\begin{defn}\label{df:MuD}

Let $i_0, i_1, \dots , i_d$ be integers satisfying $1 \leq i_0 < i_1 < \cdots
< i_d \leq N$.
We define the operators \[\mu ^d \colon hom_{\mathcal{F}(\boldsymbol{L}^{\#})} (L_{i_{d-1}}^{\#}, L_{i_d}^{\#}) \otimes
hom_{\mathcal{F}(\boldsymbol{L}^{\#})} (L_{i_{d-2}}^{\#}, L_{i_{d-1}}^{\#}) \otimes \cdots \otimes hom_{\mathcal{F}(\boldsymbol{L}^{\#})} (L_{i_0}^{\#} , L_{i_1}^{\#})\to
hom_{\mathcal{F}(\boldsymbol{L}^{\#})} (L_{i_0}^{\#}, L_{i_d}^{\#})\]
as follows.
For $y_k \in L_{i_{k-1}} \cap L_{i_k} \, (1 \leq k \leq d)$, we set
\[\mu ^d ([y_d], [y_{d-1}], \dots , [y_1]) \coloneqq \sum_{ \substack{y_0
\in L_{i_0} \cap L_{i_d} \\ [u] \in \mathcal{M}^{d+1} (y_d, \dots , y_1 ;
y_0)}} (-1)^{s(u)}  [y_0] \, . \]

\end{defn}

\begin{rem}\label{rem:DegOfMu}

We can show that $\mathcal{M}^{d+1} (y_d, \dots , y_1 ; y_0) \neq \varnothing
\Rightarrow i(y_0) = i(y_1) + i(y_2) + \dots + i(y_d) + (2 - d)$ by the following
argument, so we can identify $\deg (\mu ^d ) = 2-d$
(this is also the case when $d = 1$).

First we consider the case when $i(y_1) = i(y_2) = \dots i(y_d)  = 1$, then
we can conclude that $\pi < \alpha _{i_d}(y_0) - \alpha _{i_0} (y_0) < 2\pi$,
hence $i(y_0) = 2$.
(To show this, one should consider the case when the image of $u \in \mathcal{M}^{d+1} (y_d, \dots , y_1 ; y_0)$ is a regular ($d+1$)-gon in $M$ and $X$ is a constant vector field around $u$ in some coordinate system. The general case follows from robustness of the indices under suitable homotopy.)
Next we can check that the value $- i(y_0) + i(y_1) + i(y_2) + \dots + i(y_d)$
is independent of the choces of the brane structures (recall that the order of subtraction
of $i(y_0)$ is different from the others).
In the case which we first consider, we have $- i(y_0) + i(y_1) + i(y_2) + \dots + i(y_d) = -2+d$, hence we can conclude that $i(y_0) = i(y_1) + i(y_2) + \dots + i(y_d)
+ (2 - d)$ holds in any case.
A more general equation is proved in lemma \ref{lem:IndexFormula}.

\end{rem}

Now, we extend the domain of $\mu$'s so that $e$'s become the identities in
our $A_{\infty}$-category.

\begin{defn}\label{df:Mu}

For $e_i \in CF^0(L^{\#}_i, L^{\#}_i)$, we define 
$\mu ^1(e_i) = 0$, 
$\mu ^2(a, e_i) = a, \mu ^2(e_i , b) = (-1) ^{|b|}b$, 
$\mu ^d ( \dots , e_i , \dots ) = 0$ for $d \geq 3$. Here $|b| = \deg (b)$ is the degree of $b$.

Furthermore, we extend the domain of $\mu$'s in the trivial way for the case
that there exists $k$ such that $i_{k-1} > i_k$.

\end{defn}

\begin{defn}\label{df:FSCat}

For a collections of Lagrangian branes $\boldsymbol{L}^{\#}$, we define 
$\mathcal{F}(\boldsymbol{L}^{\#})^{\to}$ as follows:
we set $Ob(\mathcal{F}(\boldsymbol{L}^{\#})^{\to}) \coloneqq \{ L^{\#}_1, L^{\#}_2 , \dots , L^{\#}_N \}$,
\begin{align*}hom^d_{\mathcal{F}(\boldsymbol{L}^{\#})^{\to}} (L^{\#}_i, L^{\#}_j) \coloneqq
\begin{cases}
CF^d(L^{\#}_i, L^{\#}_j) &(i < j )\\
ke_i  & (i = j \, \text{and} \, k= 0 )\\
0 &(\text{otherwise}).
\end{cases}
\end{align*}
for $L^{\#}_i, L^{\#}_j \in Ob(\mathcal{F}(\boldsymbol{L}^{\#})^{\to})$ as in Definiton \ref{df:CF}, and
\item \begin{align*}
\mu ^d_{\mathcal{F}(\boldsymbol{L}^{\#})^{\to}} \coloneqq \mu ^d \colon hom_{\mathcal{F}(\boldsymbol{L}^{\#})^{\to}}(L^{\#}_{i_{d-1}},
L^{\#}_{i_d}) \otimes hom_{\mathcal{F}(\boldsymbol{L}^{\#})^{\to}}(L^{\#}_{i_{d-2}}
, L^{\#}_{i_{d-1}}) \otimes \cdots \otimes hom_{\mathcal{F}(\boldsymbol{L}^{\#})^{\to}}(L^{\#}_{i_0},
L^{\#}_{i_1}) \\
\to hom_{\mathcal{F}(\boldsymbol{L}^{\#})^{\to}}(L^{\#}_{i_0} , L^{\#}_{i_d})
\end{align*}
as in Definition \ref{df:MuD}, \ref{df:Mu}.

\end{defn}

\begin{thm}

The above $\mathcal{F}(\boldsymbol{L}^{\#})^{\to}$ is a directed $A_{\infty}$-category.

\end{thm}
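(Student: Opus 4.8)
The plan is to verify the two defining conditions of a directed $A_{\infty}$-category (Definition \ref{def:DirAInfCat}), namely finiteness of the object set together with the existence of a total order making the hom spaces vanish below the diagonal and equal $k \cdot e_i$ on the diagonal, and then to confirm that the data $(\mu^d)$ of Definition \ref{df:MuD} and \ref{df:Mu} genuinely assemble into an $A_{\infty}$-category, i.e. satisfy the $A_{\infty}$-associativity relations. The directedness conditions are essentially built into the construction: the object set $Ob(\mathcal{F}(\boldsymbol{L}^{\#})^{\to}) = \{L_1^{\#}, \dots, L_N^{\#}\}$ is finite, and the total order is the given ordering $L_1^{\#} < \cdots < L_N^{\#}$. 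By the case distinction in Definition \ref{df:CF} (reproduced in Definition \ref{df:FSCat}), we have $hom_{\mathcal{F}(\boldsymbol{L}^{\#})^{\to}}(L_i^{\#}, L_j^{\#}) = 0$ for $i > j$ and $hom_{\mathcal{F}(\boldsymbol{L}^{\#})^{\to}}(L_i^{\#}, L_i^{\#}) = k \cdot e_i$, so condition (2) of Definition \ref{def:DirAInfCat} holds immediately.

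\emph{Gradings and well-definedness of $\mu^d$.} First I would record that the $\mu^d$ are well-defined operators of the correct degree. Well-definedness requires that each moduli space $\mathcal{M}^{d+1}(y_d, \dots, y_1; y_0)$ is a finite set, which was asserted in the construction (via Gromov compactness / the combinatorial description following \cite{Se08}), and that the sign $(-1)^{s(u)}$ is well-defined, which follows since it is a product of signs attached to the finitely many vertices and edges of the polygon $u$. That $\deg(\mu^d) = 2-d$ is exactly the index formula $i(y_0) = i(y_1) + \cdots + i(y_d) + (2-d)$ discussed in Remark \ref{rem:DegOfMu} and to be proved in Lemma \ref{lem:IndexFormula}; I would cite that here. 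The directed truncation of the hom spaces guarantees that $\mu^d$ is only nonzero when $i_0 < i_1 < \cdots < i_d$, together with the unital extension of Definition \ref{df:Mu}, so no compatibility issues arise at the level of which compositions are defined.

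\emph{The $A_{\infty}$-relations.} The substantive content is the verification of the associativity relations, and here the cleanest route is to invoke that $\mathcal{F}(\boldsymbol{L}^{\#})^{\to}$ is, by construction, the directed subcategory $Fuk(M)(L_1^{\#}, \dots, L_N^{\#})^{\to}$ of the Fukaya category $Fuk(M)$ of the exact symplectic surface $M$, as stated in Subsection \ref{subsubsec:Obj}. Since $Fuk(M)$ is a genuine $A_{\infty}$-category (its $A_{\infty}$-relations being established in \cite{Se08}), the induced structure on the directed subcategory $\mathcal{A}^{\to}(\boldsymbol{Y})$ of Definition \ref{def:DirSubCat} automatically inherits the $A_{\infty}$-relations: the higher compositions are induced in the canonical way from $\{\mu_{\mathcal{A}}^d\}$, truncating to zero outside the directed range, and this truncation is consistent with the relations because the directed condition is order-compatible. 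Thus the combinatorially defined $(\mu^d)$ agree with the Floer-theoretic ones and therefore satisfy the $A_{\infty}$-relation of Definition \ref{def:AInfCat}. The only point requiring genuine checking is that the \emph{combinatorial} signs $(-1)^{s(u)}$ coincide (up to the conventions fixed in Subsection \ref{subsubsec:Grading}) with the Floer-theoretic signs, so that the combinatorial $\mu^d$ really do reproduce the Fukaya $A_{\infty}$-structure.

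\emph{The main obstacle.} The hard part will be the sign verification: confirming that the edge-and-vertex sign rule of the construction produces precisely the signs appearing in the $A_{\infty}$-relation, so that contributions cancel in pairs as illustrated in the proof of Lemma \ref{lem:mu11square}. This is exactly where the choice of the non-trivial Pin structure enters (Remark \ref{rem:DifferenceBetweenAbouzaid}), and where the present construction differs from \cite{Ab08}; the promised general cancellation statement referred to in the proof of Lemma \ref{lem:mu11square} (``we prove this later in more general settings'') is what must be established to close the argument rigorously. Concretely, one degenerates a one-parameter family of immersed $(d+2)$-gons and checks that the two boundary degenerations contribute the two terms $\mu^k(\dots, \mu^j(\dots), \dots)$ of the relation with opposite signs; verifying that the combinatorial signs match the Koszul signs $(-1)^{\bigstar_i}$ of Definition \ref{def:AInfCat} is the delicate bookkeeping that carries the whole proof. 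Everything else is either definitional or a direct appeal to the corresponding results in \cite{Se08}.
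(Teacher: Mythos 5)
Your proposal is correct in substance but takes a genuinely different route from the paper. You reduce the $A_\infty$-relations to the general theory: $\mathcal{F}(\boldsymbol{L}^{\#})^{\to}$ is by construction the directed subcategory $Fuk(M)^{\to}(\boldsymbol{L}^{\#})$ of Seidel's Fukaya category, so the relations are inherited once one checks that the combinatorial signs agree with the Floer-theoretic ones. The paper explicitly acknowledges that this route is available (``this statement automatically follows from the general theory \dots in \cite{Se08}'') but deliberately declines it in favour of an elementary, self-contained combinatorial proof modelled on Lemma 3.6 of \cite{Ab08}: the only configurations contributing to the $A_\infty$-relation are immersed polygons $w$ with a single obtuse angle; each such $w$ admits exactly two subdivisions $u \cup v$ and $u' \cup v'$ into a pair of convex polygons, producing the two terms $\mu^l(\dots,\mu^k(\dots),\dots)$; and a direct parity count (interior edges counted twice or not at all, boundary edge contributions identical in both subdivisions, vertex contributions differing only at the three distinguished corners, combined with the index formula of Remark \ref{rem:DegOfMu}) shows the two terms carry opposite signs and cancel. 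What each approach buys: yours outsources the analytic content to \cite{Se08}, but the ``only point requiring genuine checking'' that you flag --- matching the edge-and-vertex sign rule against Seidel's Pin-structure signs --- is essentially the entire difficulty, and is arguably no lighter than the direct cancellation argument; the paper's approach avoids that comparison altogether and, moreover, validates the combinatorial sign conventions that are used verbatim in the explicit computations of Sections \ref{sec:ExAndProb} and \ref{sec:PrfOfCompEx}. Your sketch of the elementary alternative (degenerating a one-parameter family of $(d+2)$-gons) is the right picture, but as stated it remains a plan rather than a proof; the paper's subdivision-pairing argument is the concrete realisation of it.
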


\begin{proof}

The directedness is clear from the definition, so it is sufficient to show that $\mu$'s satisfy the $A_{\infty}$-relation:
$\sum (-1)^{\bigstar _j} \mu ^l (y_d, \dots , y_{j+k+1}, \mu ^k (y_{j+k}
, \dots , y_{j+1} ) , y_j , \dots y_1) = 0$, 
where $\bigstar _j \coloneqq \sum _{1 \leq h \leq j} (i(y_h) - 1 )$.

This statement automatically follows from the general theory of the Fukaya-Seidel
categories of exact Lefschetz fibrations in \cite{Se08} like Lemma \ref{lem:mu11square}.
However, there again exists an elementary proof so we will pursue it. The following
proof is almost the same as in Lemma 3.6. in \cite{Ab08} except for  signs.

We prove the above claim only for the case $i_0 < i_1 < \cdots < i_k$ in the sense of definition \ref{df:MuD} since the other part is straight forward.
First, we note that only the polygon that has one obtuse angle like in Figure
\ref{fig:Ainfinity} contributes to the $A_{\infty}$-relation. We write it
by $w$.
This $w$ has two subdivisions $u \cup v$ and $u' \cup v'$ as in the Figure
\ref{fig:Ainfinity}.

\begin{figure}[hbt]
\centering
\includegraphics[width=8cm]{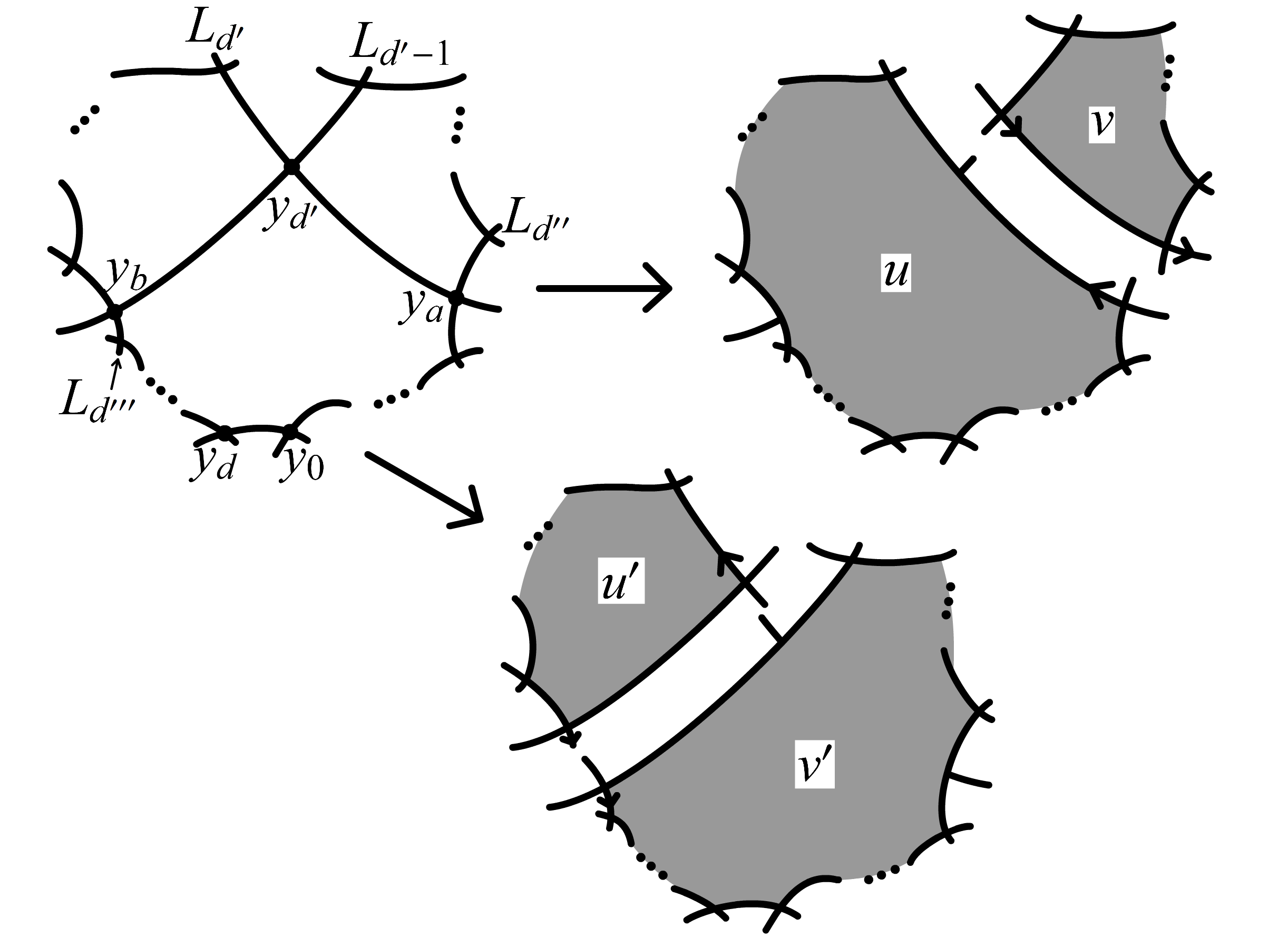}
\caption{}\label{fig:Ainfinity}
\end{figure}

Now it is sufficient to show that $ \sum _{1 \leq i \leq d''-1} (i(y_i) -
1) + s(u) + s(v) + \sum _{1 \leq i \leq d'-1} (i(y_i) - 1) + s(u') + s(v')
= 1\,  \text{mod} \, 2$, 
since the equation above shows that the terms come from $w = u \cup v$
and $w = u' \cup v'$ in the $A_{\infty}$-relation cancel each other and thus
the $A_{\infty}$-relation holds.

First, we consider how each edge contributes to $s(u) + s(v)$ and $s(u')
+ s(v')$. In both subdivision, the edges in the interior of $w$ are counted
twice or are not counted, so the interior edges don't affect the parity of
$s(u) + s(v)$ and $s(u') + s(v')$.
The contribution of edges on the boundary of $w$ is the same in both subdivisions, hence
the contribution of edges to $s(u) + s(v) + s(u') + s(v')$ is $0$ in mod
$2$.

Second, the contributions of vertices to $s(u) + s(v)$ and $s(u') + s(v')$
are the same except for $y_a, y_{d'}, y_b$.
In the case of the Figure \ref{fig:Ainfinity}, the contribution of $y_a$ and $y_{d'}$ for $s(u) + s(v)$ and for $s(u') + s(v')$ are different, so we get 
$\left( \sum _{1 \leq i \leq d''-1} (i(y_i) - 1) + s(u) + s(v)  \right) +
\left( \sum _{1 \leq i \leq d'-1} (i(y_i) - 1) + s(u') + s(v') \right) 
= \sum _{1 \leq i \leq d''-1} (i(y_i) - 1) + \sum _{1 \leq i \leq d'-1} (i(y_i)
- 1) + i(y_a) + i(y_{d'})$.

From Remark \ref{rem:DegOfMu}, we know that 
$i(y_{d'}) = i(y_a) - \sum_{d'' \leq i \leq d'-1} i(y_i) - 2 + (d' - d +
1)$ 
hence we can deduce that $\sum _{1 \leq i \leq d''-1} (i(y_i) - 1) + \sum
_{1 \leq i \leq d'-1} (i(y_i) - 1) + i(y_a) + i(y_{d'}) = 1 \, \text{mod}
\, 2$.

Even when the Figure \ref{fig:Ainfinity} is not the case, we can check in the same manner.
\hfill $\Box$

\end{proof}

\begin{defn}[Fukaya-Seidel category]\label{df:FSCatOfExactSymplecticLefschetzFibr}

For an exact Lefschetz fibration $\pi \colon E \to D$, we choose a distinguished
basis of vanishing paths $\boldsymbol{\gamma}$ and construct a distinguished basis
of vanishing cycles $\boldsymbol{L}$.
If $\boldsymbol{L}$ is not in general position, we perturb $L_i$ by some
elements in $\text{Ham} (M, \partial M)$ to make them in general position.
We also write the collection of perturbed vanishing cycles by $\boldsymbol{L}$.
Next, we give brane structures to each vanishing cycle to obtain a distinguished
basis of vanishing branes $\boldsymbol{L}^{\#} = ( L^{\#}_1 , L^{\#}_2, \dots
, L^{\#}_N )$.

Finally, we define $\mathcal{F}(\pi )^{\to} \coloneqq \mathcal{F}(\boldsymbol{L}^{\#})^{\to}$.
We call it the {\it Fukaya-Seidel category} of $\pi$.

\end{defn}

This is just a repetition but worth repeating that the definition of $\mathcal{F}(\pi)^{\to}$ above is nothing but $Fuk(M)^{\to}(\boldsymbol{L}^{\#})$ with the notation in section \ref{sec:AlgPre1}.
Here $Fuk(M)$ is the Fukaya category of an exact symplectic manifold. The definition can be found in \cite{Se08}.

\begin{thm}[(very special case of) Theorem 18.24 in \cite{Se08}]\label{thm:Invariance}

Let $\pi \colon E \to D$ be an exact Lefschetz fibration with regular fibre
$\Sigma _{g, k}$.
Then the equivalence class as triangulated $A_{\infty}$-category of the
category of the twisted complexes of the Fukaya-Seidel category $Tw \mathcal{F}(\pi
)^{\to}$ is an invariant of the exact Lefschetz fibration, i.e. it is independent
of all additional choices to define $Tw \mathcal{F}(\pi )^{\to}$.

\end{thm}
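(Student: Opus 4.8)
The plan is to exploit the identification $\mathcal{F}(\pi)^\to = Fuk(M)^\to(\boldsymbol{L}^\#)$ recorded after Definition \ref{df:FSCatOfExactSymplecticLefschetzFibr} and to reduce the invariance to statements about directed subcategories of the \emph{fixed} ambient Fukaya category $Fuk(M)$, together with the interplay between Dehn twists and algebraic twists. I would sort the data entering the construction into two groups: (i) the auxiliary data that leaves the isotopy class of the distinguished basis of vanishing paths $\boldsymbol{\gamma}$ unchanged --- namely the Hamiltonian perturbations used to achieve general position, the trivialization $X$ of $TM$ together with the complex structure on $M$, and the brane structures $\alpha_i$ and switching points $p_i$; and (ii) the choice of the isotopy class of $\boldsymbol{\gamma}$ itself.

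For group (i), the key observation is that all of these changes replace each Lagrangian brane $L_i^\#$ by an object of $Fuk(M)$ isomorphic to the old one in $H^0(Fuk(M))$. Perturbing $\boldsymbol{L}$ within its Hamiltonian isotopy class yields isomorphic objects; by Remark \ref{rem:AmbiguityOfVC} deforming $\boldsymbol{\gamma}$ within its isotopy class moves the vanishing cycles by elements of $\text{Ham}(M, \partial M)$; and altering the brane structure within a fixed homotopy class or moving the switching point $p_i$ likewise only changes $L_i^\#$ to an isomorphic object. Lemma \ref{lem:ExistGoodTriv} guarantees that an unobstructing trivialization exists, and any two such differ only by a relabelling of the gradings. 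Applying Lemma \ref{lem:equivOfDirSubCat}(1) then shows that the directed subcategories obtained from any two choices in group (i) are quasi-isomorphic, and Remark \ref{rem:DerEquiv} upgrades this to an equivalence of the triangulated $A_\infty$-categories $Tw\mathcal{F}(\pi)^\to$.

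The remaining and hardest ingredient is independence from the isotopy class of $\boldsymbol{\gamma}$. Any two distinguished bases of vanishing paths are connected by a finite sequence of Hurwitz moves together with isotopies already handled in group (i), and under a single Hurwitz move the associated basis of vanishing cycles is modified by a Dehn twist, as recalled after Remark \ref{rem:AmbiguityOfVC}. The essential point is that this geometric Dehn twist is matched, on the level of $Fuk(M)$, by the algebraic twist of Theorem 17.16 in \cite{Se08} --- precisely the reason the non-trivial Pin structure was fixed in Remark \ref{rem:DifferenceBetweenAbouzaid}. Consequently a Hurwitz move replaces the directed subcategory $Fuk(M)^\to(\boldsymbol{L}^\#)$ by the mutation of the directed subcategory associated with the new basis along the twisted object, and this mutation becomes an equivalence after passing to twisted complexes. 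I expect this step --- establishing that a single Hurwitz move induces an equivalence of $Tw\mathcal{F}(\pi)^\to$ via the twist/Dehn-twist correspondence --- to be the main obstacle, since it is where the full Floer-theoretic input of \cite{Se08} enters and cannot be reduced to the elementary combinatorial arguments used for the $\mu$'s.

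Assembling the two groups, any two collections of choices are connected by a finite chain of the equivalences produced above, so $Tw\mathcal{F}(\pi)^\to$ is well-defined up to equivalence of triangulated $A_\infty$-categories. Since the construction here is a transcription of that in \cite{Se08} and, by the sign-level comparison carried out earlier, changes none of the relevant quasi-isomorphism classes, the statement is also a direct specialisation of Theorem 18.24 of \cite{Se08}, from which the invariance may alternatively be inherited verbatim.
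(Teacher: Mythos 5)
Your proposal is consistent with the paper, but the comparison is a little unusual here: the paper offers \emph{no} proof of this statement at all. The theorem is imported wholesale as a special case of Theorem 18.24 in \cite{Se08}; the text following it merely enumerates the additional choices and then draws corollaries. What you have written is, in effect, a faithful outline of how Seidel's proof goes, and it identifies the right skeleton: the ``soft'' choices (Hamiltonian perturbations, deformations of $\boldsymbol{\gamma}$ within its isotopy class, switching points) replace each $L_i^{\#}$ by a quasi-isomorphic object of $Fuk(M)$, so Lemma \ref{lem:equivOfDirSubCat}(1) and Remark \ref{rem:DerEquiv} dispose of them; and the change of isotopy class of $\boldsymbol{\gamma}$ is reduced to Hurwitz moves, which act on $Tw$ by mutations precisely because the geometric Dehn twist agrees with the algebraic twist (Theorem 17.16 of \cite{Se08}), which is indeed the reason the non-trivial Pin structure is fixed in Remark \ref{rem:DifferenceBetweenAbouzaid}. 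So your route buys an actual argument where the paper buys only a citation, at the cost of having to invoke the Floer-theoretic identification $\mathcal{F}(\pi)^{\to} = Fuk(M)^{\to}(\boldsymbol{L}^{\#})$ anyway, which is the same non-combinatorial input the citation hides.

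Two of your group-(i) reductions are stated too quickly to go through by Lemma \ref{lem:equivOfDirSubCat}(1) alone. First, by Remark \ref{rem:AmbiguityOfBraneStr} two brane structures on the same $L_i$ differ by an integer shift, and $L_i^{\#}$ and $L_i^{\#}[n]$ are \emph{not} isomorphic in $H^{0}(Fuk(M))$ for $n \neq 0$; the invariance of $Tw$ of the directed subcategory under shifting a single object is a separate (if standard) lemma in \cite{Se08}, not an instance of quasi-isomorphic objects. Second, two unobstructing trivializations $X$, $X'$ of $TM$ differ by a class in $H^{1}(M;\mathbb{Z})$ that need not be a global ``relabelling of the gradings''; this again changes the grading data of every object and requires Seidel's separate argument rather than Lemma \ref{lem:equivOfDirSubCat}(1). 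Neither point is fatal --- both are handled in \cite{Se08} --- but as written your group (i) silently assumes them. You also omit the reference point $* \in \partial D$ from both groups; moving it is absorbed into parallel transport along $\partial D$ and an isotopy of $\boldsymbol{\gamma}$, so it should be folded into group (ii) explicitly.
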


For the convenience, we recall what the additional choices are.
The additional choises are:
\begin{itemize}
\item a reference point $* \in \partial D$,
\item a distinguished basis of vanishing paths $\boldsymbol{\gamma} = (\gamma _1, \dots , \gamma _N)$,
\item Hamiltonial diffeomorphisms used to move vanishing cycles into general position,
\item trivialization $X$ of $TM$,
\item brane structures $\alpha _i$ of $L_i$,
and\item switching points of trivializations of Pin structures $p_i \in L_i$.
\end{itemize}

In this paper, we don't define the notion of the category of twisted complexes of $A_{\infty}$-categories.
For definitions and basic properties, please refer section 3 of \cite{Se08}.

Now we introduce two useful corollaries:

\begin{cor}\label{cor:InvOfDerCat}

For exact Lefschetz fibration $\pi$ as in the above theorem, the derived category of the Fukaya-Seidel category $D\mathcal{F}(\pi)^{\to} \coloneqq H^0(Tw\mathcal{F}(\pi)^{\to})$ is an invariant of $\pi$.

\end{cor}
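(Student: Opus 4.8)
The plan is to obtain this as a purely formal consequence of Theorem~\ref{thm:Invariance}, by transporting the invariance already established at the level of twisted complexes along the functor $H^0$. Theorem~\ref{thm:Invariance} asserts that $Tw\mathcal{F}(\pi)^{\to}$, together with its triangulated $A_\infty$-structure, is independent of the additional choices up to quasi-equivalence of triangulated $A_\infty$-categories; all that remains is to pass to cohomology.

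Concretely, I would proceed as follows. Fix $\pi$ and suppose we are given two complete sets of the additional data listed after Theorem~\ref{thm:Invariance} (reference point, distinguished basis of vanishing paths, Hamiltonian perturbations, trivialization $X$, brane structures, and switching points), yielding collections of Lagrangian branes $\boldsymbol{L}^{\#}$ and $\boldsymbol{L}'^{\#}$ and hence directed $A_\infty$-categories $\mathcal{F}(\pi)^{\to}$ and $\mathcal{F}(\pi)'^{\to}$. By Theorem~\ref{thm:Invariance} there is a quasi-equivalence of triangulated $A_\infty$-categories $\mathcal{G}\colon Tw\mathcal{F}(\pi)^{\to} \to Tw\mathcal{F}(\pi)'^{\to}$. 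By the same principle recorded in Remark~\ref{rem:DerEquiv}, applying $H^0$ to $\mathcal{G}$ yields an equivalence of triangulated categories $D\mathcal{F}(\pi)^{\to} = H^0(Tw\mathcal{F}(\pi)^{\to}) \to H^0(Tw\mathcal{F}(\pi)'^{\to}) = D\mathcal{F}(\pi)'^{\to}$. Since the two sets of choices were arbitrary, the equivalence class of $D\mathcal{F}(\pi)^{\to}$ depends only on $\pi$, which is exactly the assertion.

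The only point demanding care --- and the nearest thing to an obstacle --- is the passage from a quasi-equivalence of triangulated $A_\infty$-categories to an equivalence of the underlying triangulated categories $H^0$. I would not reprove this: it is precisely the content of Remark~\ref{rem:DerEquiv}, which in turn rests on the construction of $Tw(-)$ and on the fact that a quasi-equivalence of $A_\infty$-categories induces an equivalence on cohomology categories, both developed in Section~3 of \cite{Se08}. One also uses that $H^0(Tw\mathcal{A})$ is a genuine triangulated category, so that the induced equivalence is automatically exact. Granting these standard facts, no further symplectic or combinatorial input is required, and the corollary follows immediately from Theorem~\ref{thm:Invariance}.
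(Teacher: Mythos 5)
Your argument is correct and is exactly the intended one: the paper states this as an immediate consequence of Theorem~\ref{thm:Invariance}, with the passage from a quasi-equivalence of $Tw\mathcal{F}(\pi)^{\to}$ to an equivalence of $H^0(Tw\mathcal{F}(\pi)^{\to})$ supplied by Remark~\ref{rem:DerEquiv}. Nothing further is needed.
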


\begin{thm}[\cite{Sh15} Theorem 4.2, Corollary 4.10]

Let $\mathcal{C}$, and $\mathcal{D}$ be c-unital $A_{\infty}$-categories.
If $Tw\mathcal{C}$ and $Tw\mathcal{D}$ are quasi equivalent, then the Hochschild cohomology  $HH^*(\mathcal{C})$ and $HH^*(\mathcal{D})$ are isomorphic as graded Lie algebras.

\end{thm}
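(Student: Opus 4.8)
The plan is to break the claim into two reductions and then carry out the delicate part at the chain level. First I would reduce the statement about $Tw\mathcal{C}$ and $Tw\mathcal{D}$ to one about $\mathcal{C}$ and $\mathcal{D}$ directly, using that Hochschild cohomology, together with its Gerstenhaber (hence graded Lie) structure, is unchanged under passage to twisted complexes. Concretely, the Yoneda-type inclusion $\mathcal{C} \hookrightarrow Tw\mathcal{C}$ is cohomologically fully faithful and its image split-generates $Tw\mathcal{C}$, so it is a Morita equivalence. The crucial observation is that restriction to a \emph{full} subcategory, $CC^*(Tw\mathcal{C}) \to CC^*(\mathcal{C})$, is a \emph{strict} morphism of Hochschild cochain DGLAs: it commutes with the insertion operations defining the Gerstenhaber bracket, so it preserves the bracket on the nose. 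Thus for this step the only thing to check is that this restriction is a quasi-isomorphism, which follows from the usual split-generation form of Morita invariance; the Lie structure then matches automatically. The same applies to $\mathcal{D}$, giving $HH^*(\mathcal{C}) \cong HH^*(Tw\mathcal{C})$ and $HH^*(\mathcal{D}) \cong HH^*(Tw\mathcal{D})$ as graded Lie algebras.

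It then remains to prove the key assertion: a quasi-equivalence $\mathcal{F} \colon \mathcal{A} \to \mathcal{B}$ of c-unital $A_\infty$-categories induces an isomorphism $HH^*(\mathcal{A}) \cong HH^*(\mathcal{B})$ of graded Lie algebras. Granting this, I would chain the isomorphisms $HH^*(\mathcal{C}) \cong HH^*(Tw\mathcal{C}) \cong HH^*(Tw\mathcal{D}) \cong HH^*(\mathcal{D})$, the middle one produced by applying the key assertion to the given quasi-equivalence between $Tw\mathcal{C}$ and $Tw\mathcal{D}$.

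For the underlying vector space and the cup product, the conceptual engine is the identification of Hochschild cohomology with derived endomorphisms of the diagonal bimodule, $HH^*(\mathcal{A}) = \mathrm{Ext}^*_{\mathcal{A}\text{-}\mathcal{A}}(\mathcal{A}_\Delta, \mathcal{A}_\Delta)$. By Remark \ref{rem:DerEquiv} a quasi-equivalence induces an equivalence of the relevant derived categories, and pullback of bimodules along $\mathcal{F}$ carries $\mathcal{B}_\Delta$ to a bimodule quasi-isomorphic to $\mathcal{A}_\Delta$; matching the diagonals gives an isomorphism of graded rings (Yoneda product $=$ cup product). This part is essentially formal once the diagonal bimodules are identified.

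The Gerstenhaber bracket is the main obstacle, since the graded Lie structure is invisible from the bimodule-$\mathrm{Ext}$ picture alone, and — unlike the inclusion $\mathcal{C}\hookrightarrow Tw\mathcal{C}$ — a general $A_\infty$-functor admits no strict restriction of cochains. To recover the bracket I would work at the chain level and transport Hochschild cochains using the \emph{full} $A_\infty$-functor data $\{\mathcal{F}^d\}$, not merely $\mathcal{F}^1$, thereby building an explicit comparison map between the Hochschild cochain DGLAs. This transport preserves the bracket only up to a hierarchy of higher homotopies assembled from the $\mathcal{F}^d$, so the correct statement is that quasi-equivalent $A_\infty$-categories have $L_\infty$-quasi-isomorphic Hochschild cochain complexes; the induced graded Lie isomorphism on cohomology is then the $H^0$-shadow of this $L_\infty$-quasi-isomorphism. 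Verifying that the comparison map is an honest $L_\infty$-morphism — that it intertwines the brackets through exactly the higher homotopies dictated by the functor's higher components — is the technical heart and the step I expect to absorb most of the work. This is precisely the content packaged in \cite{Sh15}, which I would invoke to complete the argument.
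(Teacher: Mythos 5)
The paper gives no proof of this statement: it is imported verbatim from \cite{Sh15} (Theorem 4.2 and Corollary 4.10), and your proposal likewise ends by invoking \cite{Sh15} for the technical heart (the $L_\infty$-compatibility of the bracket under transport along the full $A_\infty$-functor data), so the two are in essential agreement. Your preliminary reductions --- the strict DGLA restriction to the full split-generating subcategory $\mathcal{C}\hookrightarrow Tw\mathcal{C}$ and the chain of isomorphisms through $HH^*(Tw\mathcal{C})\cong HH^*(Tw\mathcal{D})$ --- are a sound and accurate reading of how the cited result is organised.
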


Thanks to the above theorem, we have another corollary:

\begin{cor}\label{cor:InvOfHochschild}

For exact Lefschetz fibration $\pi$ as in theorem \ref{thm:Invariance}, the Hochschild cohomology of the Fukaya-Seidel category $HH^*(\mathcal{F}(\pi)^{\to})$ is an invariant of $\pi$.

\end{cor}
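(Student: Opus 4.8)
The plan is to chain together the two invariance results stated above. Fix the exact Lefschetz fibration $\pi \colon E \to D$ with regular fibre $\Sigma_{g, k}$, and suppose we make two different collections of the additional choices listed after Theorem \ref{thm:Invariance} (a reference point, a distinguished basis of vanishing paths, the Hamiltonian perturbations, a trivialization $X$, brane structures $\alpha_i$, and switching points $p_i$). These produce two Fukaya-Seidel categories, which I denote $\mathcal{A} \coloneqq \mathcal{F}(\pi)^{\to}$ and $\mathcal{A}'$ respectively. By the standing convention of Section \ref{sec:AlgPre1}, both are c-unital $A_{\infty}$-categories (indeed directed, hence c-unital) with finite-dimensional hom spaces, so both lie in the scope of the cited Hochschild-invariance statement.

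First I would invoke Theorem \ref{thm:Invariance}, which asserts precisely that the equivalence class of $Tw\mathcal{F}(\pi)^{\to}$ as a triangulated $A_{\infty}$-category is independent of all of these choices. Applied to the two collections, this yields a quasi-equivalence $Tw\mathcal{A} \simeq Tw\mathcal{A}'$ of triangulated $A_{\infty}$-categories. Next I would feed this quasi-equivalence into the theorem of \cite{Sh15} recalled immediately before the statement: since $\mathcal{A}$ and $\mathcal{A}'$ are c-unital and their categories of twisted complexes are quasi-equivalent, the Hochschild cohomologies $HH^*(\mathcal{A})$ and $HH^*(\mathcal{A}')$ are isomorphic as graded Lie algebras. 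As the two collections of choices were arbitrary, $HH^*(\mathcal{F}(\pi)^{\to})$ is well-defined up to graded Lie algebra isomorphism, and is therefore an invariant of $\pi$.

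The only point requiring genuine care -- and the step I would single out as the main, if modest, obstacle -- is matching the two notions of equivalence: I must confirm that the ``equivalence as a triangulated $A_{\infty}$-category'' delivered by Theorem \ref{thm:Invariance} is exactly the quasi-equivalence of the $Tw$-categories hypothesized by the \cite{Sh15} theorem. This identification is the content of Remark \ref{rem:DerEquiv}, where a quasi-equivalence of c-unital $A_{\infty}$-categories and the resulting quasi-equivalence on twisted complexes are put on the same footing. Once that bookkeeping is in place the corollary is immediate; no further geometric input about $\pi$, the vanishing cycles, or the signs in the $\mu^d$ is needed beyond what Theorem \ref{thm:Invariance} already encapsulates.
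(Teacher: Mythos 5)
Your proposal is correct and is exactly the argument the paper intends: the corollary is stated as an immediate consequence of Theorem \ref{thm:Invariance} combined with the quoted theorem of \cite{Sh15}, which is precisely the chain you describe. The paper offers no further proof, so your write-up (including the remark about matching the notions of quasi-equivalence) matches and slightly elaborates the paper's own reasoning.
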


We use the Hochschild cohomology groups for distinguishing Lefschetz fibrations.
The definition of Hochschild cohomology is given in section \ref{sec:PrfOfCompEx}.

\section{Lefschetz fibrations and exact symplectic structures}\label{sec:LFAndExactStr}

In this chapter, we prove the following theorem:

\begin{thm}[Exsitence of an exact structure]\label{thm:EExactSymplecticStr}

Let $\pi \colon E^4 \to D$ be a Lefschetz fibration such that its regular fibre $M$ is diffeomorphic to $\Sigma _{g, k}$ with $k \geq 1$.
Then, the followings are equivalent:
\begin{enumerate}
\renewcommand{\labelenumi}{{\rm (\roman{enumi})}}
\item All vanishing cycles are homologically non-zero.
\item There exists $\omega , \theta , J$, and $j$ such that $\pi \colon (E^4, \omega
, \theta , J) \to (D, j)$ becomes an exact Lefschetz fibration.
\end{enumerate}

\end{thm}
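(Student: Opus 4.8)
The plan is to prove the two implications separately, disposing of (ii) $\Rightarrow$ (i) first as the softer direction and reserving the bulk of the work for the construction (i) $\Rightarrow$ (ii).

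For (ii) $\Rightarrow$ (i) I would argue by contraposition using Lemma \ref{lem:VCAndTheta}. Suppose $\pi$ carries an exact structure and that some vanishing cycle $L=L_\gamma\subset M$ is null-homologous, $[L]=0\in H_1(M;\mathbb{Z})$. An embedded null-homologous circle separates $M=\Sigma_{g,k}$ into two pieces $M'$ and $M''$; writing $I$ for the set of boundary circles of $M$ lying in $M'$, the relation $[L]=-\sum_{i\in I}[C_i]$, together with the fact that the only relation among the boundary classes $[C_i]$ in $H_1(M;\mathbb{Z})$ is $\sum_{i=1}^{k}[C_i]=0$, forces $I=\varnothing$ or $I=\{1,\dots,k\}$. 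Hence one of the two pieces, say $M'$, satisfies $\partial M'=L$ and contains no boundary component of $M$. Orienting $L$ as $\partial M'$ and applying Stokes gives $\int_L\theta=\int_{M'}d\theta=\int_{M'}\omega=\mathrm{Area}(M')>0$, contradicting Lemma \ref{lem:VCAndTheta}. Therefore every vanishing cycle is homologically non-zero. Read in the total space, the same computation explains why (i) is precisely the obstruction: a null-homologous vanishing cycle lets its thimble cap off a closed surface on which $\omega$ has positive area, so $[\omega]\neq 0\in H^2(E;\mathbb{R})$ and no primitive can exist.

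For (i) $\Rightarrow$ (ii) I would build the structure in two stages. Stage one is fibrewise: I equip the fibre $M$ with a Liouville structure $(\omega_M,\theta_M)$, $d\theta_M=\omega_M$, with the negative Liouville field pointing inwards along $\partial M$, and arrange that every vanishing cycle is an exact Lagrangian, $\int_{L_i}\theta_M=0$. Here hypothesis (i) enters: a homologically non-zero embedded circle can be isotoped, or $\theta_M$ adjusted within its affine space of primitives $\theta_M+Z^1(M)$, so that its period vanishes, whereas for a null-homologous one the area computation above forbids it. Stage two is the total-space construction following \cite{Se08}: near each point of $\mathrm{Crit}(\pi)$ I use the standard holomorphic Morse model $\pi(z_0,z_1)=z_0^2+z_1^2$ with its standard flat K\"ahler form, which is automatically exact and makes $J$ integrable and $\pi$ holomorphic there; near $\partial_h E\cong\partial M\times D$ I impose the canonical split form $pr_{12}^*\omega_{\partial M\times[0,\varepsilon)}+pr_3^*\omega_D$; and away from the critical points I propagate the fibrewise form by the symplectic connection and patch by a Thurston-type argument, adding a large multiple of $\pi^*\omega_D$ to keep the total form symplectic. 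Since each vanishing cycle is exact and each thimble is Lagrangian, every closed surface in $E$—assembled from thimbles $\Delta_i$ and a $2$-chain $S\subset M$ with $\partial S=-\sum n_i L_i$—has $\omega$-area $\sum n_i\int_{\Delta_i}\omega+\int_S\omega_M=\int_{\partial S}\theta_M=-\sum n_i\int_{L_i}\theta_M=0$, so $[\omega]=0\in H^2(E;\mathbb{R})$ and a global primitive $\theta$ exists, which can be chosen to agree with the local models.

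The main obstacle is Stage one carried out simultaneously for the whole distinguished collection $L_1,\dots,L_N$ while respecting the Liouville condition at $\partial M$: the periods $\int_{L_i}(\theta_M+\zeta)$ depend on $[\zeta]\in H^1(M;\mathbb{R})$ only through the pairing with $[L_i]$, so when the classes $[L_i]$ satisfy relations one cannot in general kill all periods by adjusting the primitive alone and must also use Lagrangian isotopies of the $L_i$ (which are harmless by Remark \ref{rem:AmbiguityOfVC}) and the freedom in choosing $\omega_M$. Once exact representatives are fixed, the remaining verifications—weak $J$-convexity of $\partial E$, the inward-pointing Liouville field, and compatibility of the patched $\omega$, $\theta$, $J$ with the local models near the corners of $E$—are routine and follow the template in \cite{Se08}, and I expect no conceptual difficulty there beyond careful bookkeeping.
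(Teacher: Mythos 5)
Your direction (ii) $\Rightarrow$ (i) is correct and is essentially the paper's own argument: the paper compresses your homological bookkeeping into the single sentence ``there exists a surface $S\subset M$ with $\partial S=L$,'' and your justification of why one component of $M\setminus L$ contains no boundary circle of $M$ is exactly the right way to fill that in. For (i) $\Rightarrow$ (ii) your architecture also matches the paper's: reduce everything to the fibrewise problem of representing each vanishing cycle by an exact Lagrangian, then invoke a Seidel-style total-space construction. The paper simply cites Lemma 16.9 of \cite{Se08} (Lemma \ref{lem:ConstructExactLFFromVC}) for your entire Stage two, so nothing essential is lost by your re-derivation sketch there.

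The genuine gap is that your Stage one, which you yourself flag as ``the main obstacle,'' is asserted rather than proved, and it is precisely the mathematical content of the paper's proof. Two points. First, the simultaneity worry dissolves: a Lefschetz fibration is determined up to isomorphism by the tuple of isotopy classes of its vanishing cycles, so it suffices to isotope each $L_i$ \emph{separately} within its own free homotopy class until $\int_{L_i}\theta_M=0$; no single adjustment of the primitive has to kill all periods at once. Second, the resulting single-cycle statement (the paper's Proposition \ref{prop:integratedValueOfThetaCanBeZero}) still requires proof, and the two cases behave differently. If $L$ is non-separating, the paper pushes a ``finger'' of $L$ across a transverse dual circle $N$ meeting $L$ in one point; the enclosed area shifts the period by an arbitrary prescribed amount, so $0$ is attainable (Lemma \ref{lem:RangeOfIntegratedValueOfThetaNONSEP}). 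If $L$ is separating but homologically non-zero, each component $M_j$ of $M\setminus L$ contains at least one boundary circle of $M$, and sliding $L$ in a collar shows the attainable periods form exactly the open interval $(-B_2,B_1)$ with $B_j=\int_{\partial M_j\setminus L}\theta$; here the Liouville condition is indispensable, since it is what forces $B_1,B_2>0$ and hence $0\in(-B_2,B_1)$ (Lemma \ref{lem:RangeOfIntegratedValueOfThetaSEP}). Your sketch never isolates the separating case, and without the inequality $B_j>0$ the argument would fail there; supplying these two lemmas is what is needed to close the proof.
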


If a Lefschetz fibration satisfies the condition (i) above, then it is called a PALF.

We can prove (ii)$\Rightarrow$(i) as follows.
Let $L \subset M$ be a vanishing cycle with $[L] = 0 \in H_1(M; \mathbb{Z})$, so there exists a surface $S \subset M$ such that $\partial S = L$.
Since $L$ is exact Lagrangian, we have $0 = \int _L \theta = \pm \int _S \omega \neq 0$. This is a contradiction. Hence, $L$ is homologically non-zero. 

From now, we prove (i)$\Rightarrow$(ii) in this section.
Let us fix a Lefschetz fibration $\pi \colon E^4 \to D$ such that its
regular fibre is diffeomorphic to $\Sigma _{g, k}$ and all of its vanishing
cycles are homologically non-zero.

To prove (i)$\Rightarrow$(ii), we use the following Seidel's criterion (the following statement is simplified from the original version): 

\begin{lem}[Lemma 16.9. in \cite{Se08}]\label{lem:ConstructExactLFFromVC}

Let $\boldsymbol{L} = (L_1, L_2, \dots , L_N)$ be a collection of exact Lagrangian submanifolds in $\Sigma _{g, k}$ with $k \geq 1$. Then there exists an exact Lefschetz fibration with regular fibre $\Sigma _{g, k}$, $N$ singular fibres, and distinguished basis of vanishing paths $\boldsymbol{\gamma}$ such that the corresponding distinguished basis of vanishing cycles is exactly $\boldsymbol{L}$.

\end{lem}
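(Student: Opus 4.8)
The plan is to build $E$ by hand as an exact symplectic manifold, gluing a standard local Lefschetz model over each critical value into an otherwise trivial fibration, and arranging the parallel transport so that the vanishing cycles are exactly the prescribed $L_i$. First I would fix the skeleton on the base: choose distinct critical values $c_1,\dots,c_N\in\mathring D$ and a distinguished basis of vanishing paths $\gamma_1,\dots,\gamma_N$ from the $c_i$ to a reference point $*\in\partial D$, pairwise disjoint except at $*$ and radially ordered as in Definition~\ref{df:VCs}. Over the reference fibre I take $M=\Sigma_{g,k}$ with a fixed Liouville structure $(\omega,\theta)$ and a compatible $J$ for which $\partial M$ is weakly $J$-convex; such data exist precisely because $k\geq1$, so that $M$ is a Weinstein surface and carries an exact symplectic form. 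The total space is then designed to coincide with the product exact fibration $(M\times D,\ \omega_M\oplus\omega_D,\ \theta_M\oplus\theta_D)$ away from small neighbourhoods of the thimbles, and to be modified only there.

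For each $i$ the local building block is the holomorphic $A_1$-fibration $q\colon(\mathbb C^2,\omega_{\mathrm{std}},\theta_{\mathrm{std}})\to\mathbb C$, $q(z_0,z_1)=z_0^2+z_1^2$, whose regular fibre is exact-symplectomorphic to a neighbourhood of the zero section in $T^*S^1$, the vanishing cycle being the zero section. Since $L_i$ is an exact Lagrangian circle we have $\int_{L_i}\theta=0$ (the same condition that appears in Lemma~\ref{lem:VCAndTheta}), and the Weinstein neighbourhood theorem therefore yields an exact symplectomorphism from a tubular neighbourhood $U_i$ of $L_i$ in $M$ onto such a standard $T^*S^1$-piece. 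I would then glue $q$ into $M\times D_i$ over a small disc $D_i\ni c_i$ meeting $\gamma_i$, using this identification on $U_i\times D_i$ and the product structure on $(M\setminus U_i)\times D_i$. By construction this creates one nondegenerate critical point over $c_i$ whose symplectic parallel transport along $\gamma_i$ collapses exactly $L_i$, so that $L_{\gamma_i}=L_i$.

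Over the remaining region $D\setminus\bigcup_iD_i$ I keep the product fibration and interpolate the gluings along the circles $\partial D_i$ using the symplectic connection of subsection~\ref{subsec:VanishingCycle}; because the $\gamma_i$ are pairwise disjoint the $N$ modifications are independent, so all the $L_i$ are realized simultaneously as the associated distinguished basis of vanishing cycles in $E_*$. It then remains to assemble one global datum $(\omega,\theta,J)$ and to check the axioms of Definition~\ref{df:ExactSymplecticLefschetzFibr}: $J$ is integrable and $\pi=z_0^2+z_1^2$ near $\mathrm{Crit}(\pi)$ by the holomorphic model, $\omega$ is the canonical product form near $\partial_hE$ because a neighbourhood of $\partial M$ is never touched, and $\pi$ is $(J,j)$-holomorphic after interpolating $J$ through the contractible space of compatible structures making $\pi$ a holomorphic submersion. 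The hard part will be this last assembly — patching $\theta_{\mathrm{std}}$ and $\theta_M\oplus\theta_D$ into a single primitive of $\omega$ while keeping the negative Liouville field $X_\theta$ strictly inward on all of $\partial E$, in particular along the corner where $\partial_hE$ meets $\partial_vE$. This is exactly where the exactness hypothesis is indispensable: $\int_{L_i}\theta=0$ is the obstruction to the thimble $\Delta_{\gamma_i}$ being an exact Lagrangian disc, hence to the local models matching the product region, so without it the interpolation of primitives could not be closed up.
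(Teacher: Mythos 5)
You should first note that the paper itself contains no proof of this statement: it is quoted as Lemma 16.9 of Seidel's book and used as a black box, so the only meaningful comparison is with Seidel's own argument. Your overall strategy is the right one and is essentially his: a local holomorphic $A_1$-model $q(z_0,z_1)=z_0^2+z_1^2$ glued into an otherwise trivial exact fibration $M\times D$, with the identification of the local fibre with a neighbourhood of $L_i$ supplied by the Weinstein neighbourhood theorem, and with the hypothesis that $L_i$ is exact entering exactly where you say it does --- $\int_{L_i}\theta=0$ is what makes the Weinstein chart an \emph{exact} symplectomorphism onto $\bigl(T^*_{\leq\lambda}S^1,\lambda_{\mathrm{can}}\bigr)$, so that the two primitives of $\omega$ differ by the differential of a compactly supported function and can be interpolated. (Your closing gloss that $\int_{L_i}\theta=0$ is "the obstruction to the thimble being an exact Lagrangian disc" is slightly off, since $\theta$ restricted to any Lagrangian disc is automatically exact; the substantive point, matching of primitives in the Weinstein chart, is the one you already state.)

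The genuine gap is in the assembly over the base. You install the modified fibration over small discs $D_i\ni c_i$ and keep the product fibration over $D\setminus\bigcup_iD_i$, gluing "along the circles $\partial D_i$". That gluing does not exist: the restriction of the modified piece to $\partial D_i$ is the mapping torus of the Dehn twist $\tau_{L_i}$ (the monodromy of a Lefschetz fibration around an $A_1$-critical value), while the restriction of the product piece to $\partial D_i$ is the trivial $M$-bundle, and these are isomorphic only if $\tau_{L_i}$ is isotopic to the identity --- which fails whenever $L_i$ is essential in $\Sigma_{g,k}$, in particular in every case this paper cares about. "Interpolating using the symplectic connection" cannot absorb a nontrivial monodromy. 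The repair, which is what Seidel actually does, is to make every gluing region simply connected: either thicken each $\gamma_i\cup D_i$ to a keyhole region $R_i$ touching $\partial D$ and glue the local model to the product along the arcs $\partial R_i\setminus\partial D$, or equivalently build one single-critical-point exact Lefschetz fibration per $L_i$ over its own disc with base point on the boundary and paste the $N$ fibrations along boundary arcs of their bases. With that correction, and with the care you already flag about extending $\theta$ and $J$ so that $X_\theta$ stays strictly inward along $\partial_v E$ and the corner, the argument closes up, and since parallel transport along $\gamma_i$ is the identity outside the modified region, the $i$-th vanishing cycle is exactly $L_i$.
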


Hence, to prove (i)$\Rightarrow$(ii), it is enough to show the following proposition: 

\begin{prop}\label{prop:integratedValueOfThetaCanBeZero}

Let $M = (\Sigma _{g, k}, \omega , \theta, J)$ with $k \geq 1$ be an exact symplectic manifold and $L \cong S^1 \hookrightarrow M$ be an (oriented) homologically non-zero Lagrangian submanifold.
Then, there exists $\widetilde L \cong S^1 \hookrightarrow M$ such that $\widetilde L$ and $L$ are free homotopic and $\int _{\widetilde L} \theta = 0$. 

\end{prop}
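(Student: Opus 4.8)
The plan is to study how the quantity $\int_{L'}\theta$ varies as $L'$ ranges over the embedded circles freely homotopic to $L$, and to force it to hit $0$ by an intermediate–value argument. Write $\mathcal{L}$ for the space of such $L'$; since freely homotopic simple closed curves on a surface are ambient isotopic, $\mathcal{L}$ is path–connected, and $\Phi(L')\coloneqq\int_{L'}\theta$ is a continuous function on it. The basic tool is Stokes' theorem: if $L_0,L_1\in\mathcal{L}$ are disjoint they cobound an embedded annulus $A\subset M$, and $\Phi(L_1)-\Phi(L_0)=\int_A d\theta=\pm\int_A\omega$, the signed swept area. In particular, pushing $L'$ slightly to either of its two sides (both available since $L'\subset\mathring{M}$ is two–sided in the orientable surface $M$) strictly raises $\Phi$ on one side and strictly lowers it on the other; hence $\Phi$ has no local extremum, so its image $\Phi(\mathcal{L})$ is an interval. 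It therefore suffices to produce one curve in $\mathcal{L}$ with $\Phi\le 0$ and one with $\Phi\ge 0$. Because reversing the orientation of $L$ only changes the sign of $\Phi$ and leaves the conclusion unaffected (Remark \ref{rem:VCDontHaveOri}), I may assume $c\coloneqq\Phi(L)\ge 0$; if $c=0$ then $\widetilde{L}=L$ works, so assume $c>0$.

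For the non–positive side I would first use the negative Liouville field. From $\omega(\cdot,X_\theta)=\theta$ one computes $\mathcal{L}_{X_\theta}\theta=\iota_{X_\theta}\omega=-\theta$, so the time–$t$ flow $\phi_t$ of $X_\theta$ satisfies $\phi_t^*\theta=e^{-t}\theta$; since $X_\theta$ points strictly inward, $\phi_t$ is defined for all $t\ge 0$ and is isotopic to the identity. Thus $\phi_t(L)\in\mathcal{L}$ and $\Phi(\phi_t(L))=e^{-t}c\to 0^+$, which already gives $\inf_{\mathcal{L}}\Phi\le 0$. To upgrade this to a strictly negative (hence attained) value I would push $L$ toward $\partial M$ and exploit the convexity forced by the inward Liouville field: in the model collar the condition that $X_\theta$ points inward gives $\int_{\beta}\theta>0$ for every boundary circle $\beta$ oriented as part of $\partial M$ (component–by–component, $\int_{\partial M}\theta=\int_M\omega>0$). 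If $L$ separates, then $[L]\ne 0$ means $L$ cobounds with a nonempty union $B_1$ of boundary circles a subsurface $M_1$, and Stokes gives $\Phi(L)=\int_{M_1}\omega-\int_{B_1}\theta$; pushing $L$ across $M_1$ to hug $B_1$ drives $\Phi$ down to $-\int_{B_1}\theta<0$, yielding the desired negative value. If $L$ is non–separating I would instead slide $L$ monotonically to one side through the handle containing it; over a full return the net swept area vanishes (as $H^2(M)=0$), so along this loop $\Phi$ attains a minimum of the form $c-(\text{swept area})$, which a convexity bound on $c$ makes negative.

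Once a curve with $\Phi\le 0$ and a curve with $\Phi\ge 0$ (namely $L$ itself) are in hand, the interval $\Phi(\mathcal{L})$ contains $0$, i.e.\ there is $\widetilde{L}\in\mathcal{L}$ with $\int_{\widetilde{L}}\theta=0$, as claimed. The routine parts are the Stokes computations and the connectedness/IVT packaging. The main obstacle is the strict negativity in the non–separating case: unlike the separating case, $L$ does not bound, so there is no clean Stokes region, and one must show that the area swept before the homotopy begins to cancel genuinely exceeds $c=\int_L\theta$. I expect the decisive input to be a quantitative form of boundary convexity—that the inward–pointing $X_\theta$ forces $\int_L\theta$ to lie strictly inside the range of swept areas available in $M$—so that the minimum of $\Phi$ along the sliding loop is strictly below $0$. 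Making this estimate precise, uniformly in $g$ and $k$, is where I would concentrate the effort.
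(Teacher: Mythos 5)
Your general framework (path-connectedness of the isotopy class, continuity of $\Phi(L')=\int_{L'}\theta$, and the intermediate value theorem) is sound, and your treatment of the separating case is essentially the paper's Lemma \ref{lem:RangeOfIntegratedValueOfThetaSEP}: push $L$ across the side $M_1$ it cobounds with a nonempty collection of boundary circles, so that $\Phi$ tends to $-\int_{B_1}\theta<0$, and conclude by IVT. The Liouville-flow computation $\phi_t^*\theta=e^{-t}\theta$ is correct but, as you note, only shows $\inf\Phi\le 0$ and closes nothing. The genuine gap is the non-separating case, and the estimate you are hoping for there is not just unproved but false. You want ``a quantitative form of boundary convexity'' forcing $c=\int_L\theta$ to lie inside the range of areas swept by a loop of curves returning to $L$. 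But for a non-separating $L$ one has $[L]\neq 0$ in $H_1(M;\mathbb{R})$, so replacing $\theta$ by $\theta+\lambda$ for a closed $1$-form $\lambda$ compactly supported in $\mathring{M}$ (hence not altering $\omega$, the Liouville field near $\partial M$, or any swept area) changes $c$ by $\langle[\lambda],[L]\rangle$, which can be made arbitrarily large. No bound on $c$ in terms of $\omega$ and boundary convexity can therefore exist, and a monotone slide ``through the handle'' sweeps at most a fixed amount of area before it must stop (cutting $M$ along $L$ does not produce a product, so $L$ does not sweep out $M$). Your argument as written cannot produce a curve with $\Phi\le 0$ in this case.

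The missing idea --- the content of the paper's Lemma \ref{lem:RangeOfIntegratedValueOfThetaNONSEP} --- is that for non-separating $L$ the available swept area is \emph{unbounded}, because one can wind around a dual circle. Choose an embedded circle $N$ meeting $L$ transversally in one point and a tubular neighbourhood $\iota\colon S^1\times(-\varepsilon,\varepsilon)\hookrightarrow M$ of $N$ with $\iota^*\omega=d\varphi\wedge dx$ and $\iota^{-1}(L)$ a single arc $\{pt\}\times(-\varepsilon,\varepsilon)$. Replacing that arc by the (projection of the) graph of $x\mapsto a\,h(x)$ in the universal cover $\mathbb{R}\times(-\varepsilon,\varepsilon)$, where $h\ge 0$ is compactly supported with $\int h=1$, yields an embedded curve $L_a$ isotopic to $L$ with $\int_{L_a}\theta=\int_L\theta-a$ for \emph{every} $a\in\mathbb{R}$: the region between the two arcs lives in the infinite-area universal cover of the annulus, so the Stokes contribution is exactly $a$ rather than being capped by the area of $M$. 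Taking $a=c$ finishes the non-separating case directly, with no IVT needed. Without this (or an equivalent) mechanism your proof does not go through.
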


To prove Proposition \ref{prop:integratedValueOfThetaCanBeZero}, we show the following two lemmas:

\begin{lem}\label{lem:RangeOfIntegratedValueOfThetaNONSEP}

If $L \subset M$ is non-separating, i.e. $M \setminus L$ is connected, then for any $a \in \mathbb{R}$, there exists $L_a$ such that $L_a$ is free homotopic to $L$ and $\int _{L_a} \theta = \int _L \theta - a$.

\end{lem}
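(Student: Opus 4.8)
The plan is to realise the prescribed change of $\int_L\theta$ as the $\omega$-area swept out by a homotopy of $L$, and to force this area to take any real value by exploiting the connectedness of $M\setminus L$. The governing identity is Stokes' theorem: for any homotopy $F\colon S^1\times[0,1]\to M$ with $F_0=L$ and $F_1=L'$, the relation $d\theta=\omega$ gives
\[
\int_{L'}\theta-\int_L\theta=\int_{S^1\times[0,1]}F^*\omega,
\]
so the problem reduces to producing, for each $a\in\mathbb{R}$, a homotopy through free-homotopic circles whose total signed swept area equals $a$. Note that the same identity already underlies Lemma \ref{lem:VCAndTheta}.

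First I would construct an elementary \emph{finger} move that changes $\int\theta$ by a small, continuously adjustable amount. Picking a point $p\in L$ and a small embedded disc $B$ around $p$, and pushing a short sub-arc of $L$ across part of $B$, sweeps a region of $\omega$-area $\varepsilon$ and yields a new circle $L_\varepsilon$, freely homotopic to $L$ (the finger is null-homotopic rel endpoints), with $\int_{L_\varepsilon}\theta=\int_L\theta-\varepsilon$. Since $\varepsilon$ varies continuously with the size of the finger, the set of attainable values of $\int\theta$ in the free-homotopy class is the continuous image of a connected family, hence an interval; it therefore suffices to show that this interval is unbounded in both directions.

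This is where the non-separating hypothesis enters. Because $M\setminus L$ is connected, I can choose an embedded arc $c$ meeting $L$ transversally exactly once, joining the two local sides of $L$ through $M\setminus L$. Dragging the finger once along $c$ (rather than across a small disc) lets it sweep across the entire connected complement, and, crucially, the transverse loop determined by $c$ can be traversed repeatedly: winding the finger $n$ times adds $n$ copies of a fixed nonzero area, so together with the continuous adjustment above the swept area ranges over arbitrarily large positive and negative values. The intermediate value theorem then realises every value in between, giving a representative with $\int\theta=\int_L\theta-a$; a final perturbation by an element of $\mathrm{Ham}(M,\partial M)$ restores general position if needed.

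The main obstacle is precisely this last construction: guaranteeing that winding along $c$ produces arbitrarily large swept area while keeping the result a genuine circle in the free-homotopy class of $L$. One must arrange successive passes so that their contributions do not cancel through opposite boundary orientations, and verify that the homotopy class is preserved under the winding. This is exactly the point at which connectedness of $M\setminus L$ is indispensable, in contrast to the separating case, where the swept region is trapped on one side and the attainable interval is bounded. I expect the orientation bookkeeping of the swept region, and the control of the winding representatives, to be the technical heart of the argument.
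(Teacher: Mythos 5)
Your proposal is correct and is essentially the paper's argument: the non-separating hypothesis yields a dual circle $N$ meeting $L$ transversally in a single point, and pushing $L$ around $N$ sweeps a prescribed amount of $\omega$-area, which by Stokes changes $\int_L\theta$ by exactly that amount. The obstacle you flag at the end --- keeping the winding embedded, in the right free homotopy class, and free of cancellation between successive passes --- is resolved in the paper by choosing a tubular neighbourhood $\iota\colon S^1\times(-\varepsilon,\varepsilon)\hookrightarrow M$ of $N$ with $\iota^*\omega=d\varphi\wedge dx$ and $\iota^{-1}(L)=\{pt.\}\times(-\varepsilon,\varepsilon)$, and replacing that arc of $L$ by the image of the graph of $a\,h$ (where $h\ge 0$ is compactly supported with $\int h=1$) under the universal covering $\mathbb{R}\times(-\varepsilon,\varepsilon)\to S^1\times(-\varepsilon,\varepsilon)$: the graph projects injectively, the deformation through graphs of $ta\,h$ is a free homotopy, and the region between the zero section and the graph has signed area exactly $a$, so the value $\int_L\theta-a$ is hit on the nose and no intermediate value argument is required.
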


\begin{proof}

Since $L$ is non-separating, there exists $N \cong S^1 \hookrightarrow M$ such that $N \pitchfork L$ and $L \cap N = \{ pt. \}$.
We choose a tublar neighbourhood $\iota \colon S^1 \times (-\varepsilon, \varepsilon) \hookrightarrow M$ of $N$ such that $\iota ^{-1} (L) = \{ pt. \} \times  (-\varepsilon, \varepsilon)$ and $\iota ^* \omega = d\varphi \wedge dx$, where $\varphi$ and $x$ is the canonical coordinate of $S^1$ and $(-\varepsilon,
\varepsilon)$ respectively.
We set the composition $\tilde \iota \colon \mathbb{R} \times (-\varepsilon,
\varepsilon) \to S^1 \times (-\varepsilon, \varepsilon) \hookrightarrow M$, where the first map is the universal cover $\mathbb{R} \times (-\varepsilon,
\varepsilon) \to \mathbb{R}/\mathbb{Z} \times (-\varepsilon, \varepsilon) = S^1 \times (-\varepsilon, \varepsilon).$

Let us fix a compact supported function  $h \colon (-\varepsilon, \varepsilon) \to \mathbb{R}_{\geq 0}$ on $(-\varepsilon , \varepsilon)$ such that $\int _{-\varepsilon}^{\varepsilon} h(x) \, dx = 1$.
Next, we define $L_a \hookrightarrow M$ for $a \in \mathbb{R}$ by $L_a = L \setminus \iota(\iota^{-1}(L)) \cup \tilde \iota(\text{Graph}(ah))$, where $\text{Graph}(ah) = \{ (y, x) \in \mathbb{R} \times (-\varepsilon, \varepsilon) \, | \, y = ah(x)\}$.
Then $L_a$ is well-defined as an embedded Lagrangian submanifold, and free homotopic to $L$.
Moreover, for $a \geq 0$, we have $\int _{L_a} \theta = \int _L \theta - \int _{\iota(\iota ^{-1}(L))} \theta + \int _{\tilde \iota (\text{Graph}(ah))} \theta
 = \int _L \theta - \left( \int _{\iota(\iota ^{-1}(L))}
\theta - \int _{\tilde \iota (\text{Graph}(ah))} \theta \right)
 = \int _L \theta - \int _{\{ (y, x) \in \mathbb{R} \times (-\varepsilon, \varepsilon) \, | \, 0 \leq y \leq ah(x) \}} \tilde \iota ^* \omega
 = \int _L \theta - a$. We can show that the case when $a < 0$ by almost the same argument, hence this completes the proof.
\hfill $\Box$

\end{proof}

Next, we consider the case when $L \subset M$ be separating, i.e. $M \setminus L$ is not connected.
We call the connected components of $M \setminus L$ by $M'_i$ for $i = 1,
2$, where $M'_1$ is the right part of the connected components with respect
to the orientation of $L$. We set $M_i \coloneqq M'_i \cup L \subset
M$, and $B_i \coloneqq \int _{\partial M_i \setminus L} \theta$

\begin{lem}\label{lem:RangeOfIntegratedValueOfThetaSEP}

When $L \subset M$ be separating,  for any $-B_2 < a < B_1$, there exists $L_a$ satisfying that $L_a$ is free homotopic to $L$ and $\int _{L_a} \theta = a$.

\end{lem}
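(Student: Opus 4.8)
The plan is to read off the endpoints $B_1,B_2$ from Stokes' theorem and then to realise every intermediate value by sweeping $L$ across $M_1$ or across $M_2$ while controlling the enclosed area.

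First I would set $A_i\coloneqq\int_{M_i}\omega$; as $\omega$ is an area form and $M_i$ carries the orientation induced from $M$, we have $A_i>0$. Applying Stokes' theorem to $M_i$ (recall $d\theta=\omega$) and using the boundary-orientation convention fixed by ``$M_1$ is the right-hand part'' — so that $L$ appears in $\partial M_1$ with the reversed orientation and in $\partial M_2$ with its own — yields
\[
B_1=\int_L\theta+A_1,\qquad -B_2=\int_L\theta-A_2 .
\]
Thus $(-B_2,B_1)=\bigl(\int_L\theta-A_2,\ \int_L\theta+A_1\bigr)$ is an open interval containing the current value $\int_L\theta$. I would also record that the homological non-triviality of $L$ forces $C_i\coloneqq\partial M_i\setminus L\neq\varnothing$ for \emph{both} $i$: from $\partial M_i=L\cup C_i$ one gets $[L]=\mp\sum_{c\subset C_i}[c]$ in $H_1(M)$, and since the only relation among the boundary classes of $\Sigma_{g,k}$ is $\sum_{\text{all}}[c]=0$, a nonzero $[L]$ means each $M_i$ contains at least one, but not all, boundary circles of $M$; this is also why the interval is nondegenerate.

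Next I would reduce the proposition to the geometric claim that, for every $A\in[0,A_1)$, there is an embedded circle $L'\subset\mathring M_1$ that is isotopic to $L$ inside $M_1$ and cobounds with $L$ a compact region $R$ of area $\int_R\omega=A$, together with the mirror statement inside $M_2$. Granting this, Stokes on the annulus $R$ (with $\partial R=L'-L$ on the $M_1$ side) gives $\int_{L'}\theta-\int_L\theta=\int_R\omega=A$, so $\int_{L'}\theta$ ranges over $[\int_L\theta,B_1)$ as $A$ runs through $[0,A_1)$; the construction inside $M_2$ analogously covers $(-B_2,\int_L\theta]$. Setting $L_a$ to be the corresponding $L'$ (and $L_a=L$ for $a=\int_L\theta$) then realises every $a\in(-B_2,B_1)$, each $L_a$ being free homotopic to $L$ by construction.

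The remaining, and main, point is the geometric claim. Since $C_1\neq\varnothing$, the surface $M_1$ deformation retracts onto a spine $K\supset C_1$; taking a decreasing family of regular neighbourhoods $N_\delta(K)$, each $N_\delta(K)$ is diffeomorphic to $M_1$ with boundary $C_1\sqcup L'_\delta$, where $L'_\delta$ is a \emph{single} circle isotopic to $L$ (the boundary count forces exactly one extra component) and the complement $R_\delta=M_1\setminus\mathring N_\delta(K)$ is an annulus. As $\delta\downarrow 0$ the area $\int_{N_\delta(K)}\omega$ decreases continuously from $A_1$ toward $\int_K\omega=0$, so $\int_{R_\delta}\omega$ sweeps $[0,A_1)$ and the intermediate value theorem supplies every target area. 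I expect this topological step — exhibiting a single embedded circle isotopic to $L$ that encloses area arbitrarily close to the full area $A_1$ in the presence of genus and several boundary circles — to be the delicate part, the rest being Stokes' theorem and continuity. Conceptually this explains the contrast with Lemma \ref{lem:RangeOfIntegratedValueOfThetaNONSEP}: there a circle meeting $L$ transversally once permits unbounded winding, whereas a separating $L$ admits no such transverse circle, so the attainable range is bounded by the areas $A_1,A_2$, and the endpoints $B_1,-B_2$ are excluded because a single circle cannot be isotoped onto the several circles constituting $C_i$.
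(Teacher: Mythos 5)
Your proof is correct, and its skeleton coincides with the paper's: both arguments first read off $B_1=\int_L\theta+\int_{M_1}\omega$ and $-B_2=\int_L\theta-\int_{M_2}\omega$ from Stokes' theorem, and both then realise each intermediate value $a$ by producing a circle isotopic to $L$, pushed into $M_1$ (or $M_2$), cobounding with $L$ an annulus of prescribed area $|a-\int_L\theta|$, after which Stokes on that annulus finishes the job. Where you genuinely differ is in how that circle is produced. The paper constructs, for any $0<b<\int_{M_1}\omega$, a collar embedding $\iota\colon S^1\times(-b-\varepsilon,0]\hookrightarrow M_1$ of $L$ with $\iota^*\omega=d\varphi\wedge dx$ (obtained, as the parenthetical remark indicates, by cutting $M_1$ open along arcs into a standard cylinder of total area $\int_{M_1}\omega$) and sets $L_a=\iota\bigl(S^1\times\{-b\}\bigr)$, so the prescribed area is hit exactly because of the symplectic normalisation of the collar. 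You instead take a spine $K\supset C_1$ of $M_1$, shrink regular neighbourhoods $N_\delta(K)$, note that the complementary annuli have areas sweeping $[0,\int_{M_1}\omega)$ by continuity, and invoke the intermediate value theorem. Your route is somewhat more elementary in that it needs no area-preserving coordinates, only continuity of $\delta\mapsto\int_{R_\delta}\omega$ plus the standard surface-topology facts you correctly identify (the frontier of $N_\delta(K)$ is a single circle, by the boundary count for $N_\delta(K)\cong M_1$, and $R_\delta$ is an annulus, e.g.\ by an Euler-characteristic and connectedness argument); the paper's route trades that continuity argument for an explicit formula once the cylinder coordinates are in place. Your side remark that homological nontriviality of $L$ forces $C_1,C_2\neq\varnothing$, hence $B_1,B_2>0$, is also the reason the paper can take $\widetilde L=L_0$ in the proof of Proposition \ref{prop:integratedValueOfThetaCanBeZero}. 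Both arguments are sound.
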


\begin{proof}

Since $\int _{\partial M_1 \setminus L} \theta  - \int _L \theta = \int _{M_1} \omega > 0$, we have $\int _L \theta < B_1$.
Similarly, we have $-B_2 < \int _L \theta$.

We only prove the case that $\int _l \theta < a < B_1$ since the proof of the rest part is almost the same.
Since $0 < b \coloneqq a - \int _L \theta < B_1 - \int _L \theta$, for small $\varepsilon > 0$, there exists a collar neighbourhood $\iota \colon S^1 \times (-b - \varepsilon, 0] \hookrightarrow M_1$ such that $\iota ^* \omega = d\varphi \wedge dx$ where $\varphi$ is a natural coordinate of $S^1 = \mathbb{R} / \mathbb{Z}$ and $x$ is that of $(-b - \varepsilon, 0]$.
(To construct the collar neighbourhood, one should cut and open along some embedded intervals in $M_1$ to make $M_1$ to a cylinder $S^1 \times \left[- B_1 + \int _L \theta, 0\right]$.)
Let us set $L_a \coloneqq \iota \big(S^1 \times \{ -b\} \big)$.
We can easily check that this $L_a$ has desired properties.
\hfill $\Box$

\end{proof}

Let us prove Proposition \ref{prop:integratedValueOfThetaCanBeZero}. From the above lemmas, there exists $\widetilde L$ satisfying $\int _{\widetilde L} \theta = 0$: when $L$ is non-separating, we can find that $L_{\int _L \theta}$ can be chosen as $\widetilde L$; when $L$ is separating, it is enough to set $L_0$ as $\widetilde L$. This is possible since we have $B_i > 0$ by the definition that the negative Liouville vector field strictly inwards.

\section{Invariance of the derived Fukaya-Seidel categories}\label{sec:InvOfFSCat}

Our goal in this section is to define (Definition \ref{df:FSCat}) and   prove the invariance (Theorem \ref{thm:InvOfFSCat}) of the derived Fukaya-Seidel categories of Lefschetz fibrations.

\begin{defn}\label{df:FSCat}

Let $\pi \colon E \to D$ be a PALF such that its regular fibre
is $\Sigma _{g, k} \, ( k \geq 1)$ and the two-fold first Chern class vanishes, i.e. $2c_1(E) = 0$.
Then, by theorem \ref{thm:EExactSymplecticStr}, $\pi$ has a structure of exact Lefschetz fibration, so we fix it and think $\pi$ as an exact Lefschetz fibration.
We define the Fukaya-Seidel category $\mathcal{F}(\pi)^{\to}$ of Lefschetz
fibration $\pi$ by that of exact Lefschetz fibration $\pi$.

\end{defn}

\begin{thm}\label{thm:InvOfFSCat}

Let $\pi \colon E \to D$ be a PALF as in Definition \ref{df:FSCat}. Then the equivalence class of $Tw \mathcal{F}(\pi)^{\to}$ as triangulated $A_{\infty}$-category is an invariant of the exact Lefschetz fibration, i.e. it is independent of additional exact symplectic structure to define $Tw \mathcal{F}(\pi )^{\to}$.

\end{thm}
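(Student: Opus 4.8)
The plan is to exploit the fact that the combinatorial description of $\mathcal{F}(\pi)^{\to}$ given in Section~\ref{sec:FSCat} never refers to $\omega$ or $\theta$ directly: the objects are decorated curves (a brane structure $\alpha_i$ and a switching point $p_i$ on each vanishing cycle), the morphism spaces are generated by intersection points with the $\mathbb{Z}$-grading read off from the trivialization $X$, and the maps $\mu^d$ count the immersed polygons of Definition~\ref{df:MuD}. All of these data are preserved by orientation-preserving diffeomorphisms of the fibre $M\cong\Sigma_{g,k}$. Hence $\mathcal{F}(\boldsymbol{L}^{\#})^{\to}$, and with it the quasi-equivalence class of $Tw\mathcal{F}(\pi)^{\to}$, is a \emph{smooth} invariant of the decorated configuration $(M,\boldsymbol{L},X,\{\alpha_i\},\{p_i\})$; the exact symplectic structure enters only insofar as it selects the vanishing cycles and guarantees, through exactness (Lemma~\ref{lem:VCAndTheta}) and general position, that the polygon counts are finite. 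First I would therefore fix, once and for all and independently of the symplectic data, a smooth identification of a regular fibre with $\Sigma_{g,k}$ and a distinguished basis of vanishing paths $\boldsymbol{\gamma}$ in the base $D$, so that the two structures $\mathfrak{s}_0=(\omega_0,\theta_0,J_0)$ and $\mathfrak{s}_1=(\omega_1,\theta_1,J_1)$ are compared through the \emph{same} underlying combinatorial apparatus.

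The problem then reduces to the following topological statement: the two collections of vanishing cycles $\boldsymbol{L}^{0}$ and $\boldsymbol{L}^{1}$ produced from $\mathfrak{s}_0$ and $\mathfrak{s}_1$ (with the fixed $\boldsymbol{\gamma}$) agree up to an ambient isotopy of $M$ together with the moves already covered by Theorem~\ref{thm:Invariance} (Hurwitz moves and changes of decoration). For a single vanishing cycle this is immediate, since the remark after Definition~\ref{df:VC} shows that its free homotopy class is independent of the connection used to define parallel transport, and essential simple closed curves on a surface are isotopic as soon as they are freely homotopic. For the whole \emph{ordered} configuration I would connect $\mathfrak{s}_0$ to $\mathfrak{s}_1$ by a path $\mathfrak{s}_t$ of exact Lefschetz fibration structures: fibrewise the symplectic forms are area forms, whose positive convex combinations are again area forms with convex-combination primitives, and the compatible almost complex structures form a contractible space, so such a path exists. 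Along $\mathfrak{s}_t$ the vanishing cycles trace out a continuous family $\boldsymbol{L}^{t}$ of embedded essential curves in the fixed classes $[L_i]$; away from the finitely many times at which general position fails this is an isotopy through admissible configurations, and at those times the combinatorial category changes only within its quasi-equivalence class because the curves remain Hamiltonian isotopic for the structure $\mathfrak{s}_t$, a change accounted for by Theorem~\ref{thm:Invariance} and Lemma~\ref{lem:equivOfDirSubCat}. Choosing the trivialization $X_t$ (which exists by Lemma~\ref{lem:ExistGoodTriv}, using $2c_1(E)=0$) and the brane data to vary continuously, the quasi-equivalence class of $Tw\mathcal{F}(\pi)^{\to}$ is constant in $t$, and Remark~\ref{rem:DerEquiv} promotes the resulting quasi-equivalence $\mathcal{F}(\boldsymbol{L}^{0,\#})^{\to}\simeq\mathcal{F}(\boldsymbol{L}^{1,\#})^{\to}$ to an equivalence of the triangulated $A_\infty$-categories $Tw\mathcal{F}$.

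The main obstacle is the step that passes from the \emph{pointwise} freedom in choosing vanishing cycles to a statement about the ordered configuration as a whole: producing the deformation $\boldsymbol{L}^{t}$ and controlling it across the bifurcation times where two cycles fail to meet transversely or a triple point appears. I expect the cleanest way to handle this to be to establish connectedness of the space of exact Lefschetz fibration structures on the fixed smooth PALF — the delicate point being non-degeneracy of the closed two-form on the four-dimensional total space and the convexity and canonical-boundary conditions of Definition~\ref{df:ExactSymplecticLefschetzFibr} along the path — and then to invoke the classical fact (\cite{Kas80}) that the vanishing-cycle data of a Lefschetz fibration is determined up to ambient isotopy and Hurwitz equivalence by the smooth fibration alone, so that the intermediate changes are exactly those absorbed by Theorem~\ref{thm:Invariance}. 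Once the path and its control are in place, the remaining verifications are routine applications of Lemma~\ref{lem:equivOfDirSubCat} and Remark~\ref{rem:DerEquiv}.
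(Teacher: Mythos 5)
Your reduction to a path/continuity argument misses the point on which the paper's entire proof turns. The quasi-equivalence class of $Tw\mathcal{F}(\boldsymbol{L}^{\#})^{\to}$ is \emph{not} an invariant of the smooth isotopy class of the ordered configuration of curves: the moves absorbed by Theorem \ref{thm:Invariance} and Lemma \ref{lem:equivOfDirSubCat} are \emph{Hamiltonian} isotopies with respect to one fixed exact structure, and by Lemma \ref{lem:HamMove} two isotopic exact curves are Hamiltonian isotopic only when their $\theta$-periods agree. The example of Remark \ref{rem:NeedPrevLemma} (two parallel circles on a cylinder at heights $1/2$ and $1/3$) shows that isotopic configurations can already yield non-quasi-isomorphic directed categories (disjoint curves give $hom=0$, Hamiltonian-isotopic ones do not), so your appeal to the fact that the vanishing-cycle data is determined up to ambient isotopy and Hurwitz moves cannot close the argument. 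Concretely, at a bifurcation time $t_0$ of your path $\mathfrak{s}_t$ the curves $L_i^{t_0\pm\epsilon}$ are exact for the \emph{different} primitives $\theta_{t_0\pm\epsilon}$; there is no single structure in which both are exact, hence no a priori Hamiltonian isotopy between them, and the step ``the curves remain Hamiltonian isotopic for the structure $\mathfrak{s}_t$'' is unjustified. A second, independent failure: when $\omega'=k\omega$ the enclosed areas merely rescale and no Hamiltonian correction can match the configurations, so any argument staying inside $\text{Ham}(M,\partial M)$ breaks down already on the linear path from $\omega$ to $k\omega$.

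The paper's proof confronts exactly these issues. Lemma \ref{lem:valueOfThetaOnBoundary} first normalizes the two structures so that the boundary periods agree and the forms coincide near $\partial M$; Proposition \ref{prop:VCMove} then produces, for each $i$, an intermediate curve with $\int\theta=\int\theta'=0$ simultaneously (via the tubular-neighbourhood deformation in the non-separating case and the ``blowing balloons'' estimate in the separating case), so that it is Hamiltonian isotopic to $L'_i$ for $\omega'$ and to $L_i$ for $\omega$; and the exceptional case $\omega'=k\omega$ is handled by inserting an auxiliary diffeomorphism $f\in\text{Diff}_0(M,\partial M)\setminus\text{Symp}_0(M,\partial M,\omega)$, which changes the combinatorial category only by an isomorphism. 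None of these ingredients appears in your proposal, and without them the bifurcation analysis in your path argument cannot be completed.
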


\begin{rem}

There are two remarkable facts. The first one is that the space of symplectic structures on $\Sigma _{g, k}$ is contractible. We can say that our proof of the above theorem is related to this fact.
The second fact is that in the case of closed Lefschetz fibrations, the space of symplectic structures suitably compatible with the fibration is contractible \cite{Go05}, where a closed Lefschetz fibration here is, roughly speaking, Lefschetz fibration over $S^2$ with closed regular fibre. If this is the case for our situation, then we can show that the space of the structures of Lefschetz fibrations is contractible.
However, even if this is the case, the above theorem will not become trivial immediately because the relative class of symplectic structure $[\omega ] \in H^2(E, \partial ^h E)$ varies and this causes different choice of the primitive $\theta$ of $\omega$.

\end{rem}

To prove the above main theorem, we use many $\varepsilon$'s in this chapter. We will always assume that all
$\varepsilon$'s are small enough, and if necessary, we replace them smaller
enough. We won't mention it to avoid unnecessary complexity.

We begin with the preparation to prove the above theorem.
We fix a PALF $\pi \colon E \to D$ in Definition \ref{df:FSCat}
and let $\omega , \theta , J$ and $\omega ', \theta ', J'$ be two different exact syplectic structures.
We write two exact Lefschetz fibrations $\pi \colon (E, \omega , \theta , J) \to D$ and $\pi \colon (E, \omega ', \theta ', J') \to D$ by $\pi$ and $\pi '$ respectively.
We will show the above theorem by proving that $Tw \mathcal{F}(\pi)^{\to}$ and $Tw \mathcal{F}(\pi ')^{\to}$ are equivalent.

Now, we fix a common based point $* \in D \setminus \text{Critv}(\pi )$ and a common distinguished basis of vanishing paths $\boldsymbol{\gamma } = ( \gamma _1 , \dots \gamma _N )$, where $N = \# \text{Crit}(\pi )$.
We set $M = M' \coloneqq E_*$ and we abbreviate the restriction of $\omega $ and $\theta $ on $M$, and $\omega '$ and $\theta '$ on $M'$ by $\omega , \theta , \omega '$, and $\theta '$.
The (almost) complex structure $J, J'$ on $M, M'$ is irrelevant to the definition of the Fukaya-Seidel categories in our situation, so we won't argue them.
We write the vanishing cycles constructed by using symplectic connection defined with $\omega , \omega '$ by $\boldsymbol{L} = (L_1, \dots L_N )$, $\boldsymbol{L}' = (L'_1, \dots L'_N)$ respectively.
Since we use the same distinguished basis of vanishing paths $\boldsymbol{\gamma }$, they are pairwise free homotopic, i.e. $[L_i] = [L'_i] \in [S^1, M]$ when we give $L_i$ and $L'_i$ appropriate orientations.

Let us write the connected component of the boundary of $M = M'$ by $C_1 = C' _1, \dots , C_k = C'_k$ respectively, and set $\displaystyle B_i = \int _{C_i} \theta , \, B'_i =
\int _{C'_i} \theta '$.
We have $B_i, B'_i > 0$ since the negative Liouville vector fields point strictly inward, but $B_i \neq B'_i$ in general.
These inequalities may cause fatal problems, so we change the identification of points in $M$ and $M'$ as follows.

\begin{lem}\label{lem:valueOfThetaOnBoundary}

To prove theorem \ref{thm:InvOfFSCat}, it is enough to prove the case when we have $B_i = B'_i$ and for some collar neighbourhoods of $C_i = C'_i$, we have $\omega = \omega '$ and $\theta = \theta '$.

\end{lem}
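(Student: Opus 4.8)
The plan is to show that the two normalising conditions can be arranged by replacing $(\omega',\theta',J')$ with an exact Lefschetz fibration lying in the same equivalence class of Fukaya--Seidel categories, so that the general case of Theorem \ref{thm:InvOfFSCat} reduces to the normalised one. Two operations manifestly preserve $Tw\mathcal{F}(\pi')^{\to}$ and will do the work: first, isomorphisms of exact Lefschetz fibrations, since these carry a distinguished basis of vanishing cycles to another such basis; and second, deformations of the exact symplectic structure along which every vanishing cycle stays exact and moves only by a Hamiltonian isotopy of the fibre, which is harmless by Theorem \ref{thm:Invariance} and Remark \ref{rem:AmbiguityOfVC}. Throughout, the reference point $*$, the base data $(D,j,\omega_D)$ and the common distinguished basis of vanishing paths $\boldsymbol{\gamma}$ are kept fixed, so the entire reduction takes place in the fibre $M=M'=E_*$ near $\partial M$.

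The first observation is that $B_i'=\int_{C_i}\theta'$ is invariant under any diffeomorphism of $M$ fixing the boundary circles up to isotopy, so the mismatch $B_i\neq B_i'$ cannot be removed by re-identifying points and must be absorbed by an honest deformation of the one-form. I would first Hamiltonian-isotope $\boldsymbol{L}'$ so that a collar neighbourhood $\mathcal{C}\cong\partial M\times[0,\delta)$ of $\partial M$ is disjoint from every $L_i'$; this is possible since each $L_i'$ is isotopic into $M\setminus\mathcal{C}$. I then deform $(\omega',\theta',J')$ through exact Lefschetz fibration structures whose change is supported in $\mathcal{C}$, interpolating the canonical boundary model of $\omega'$ into that of $\omega$ on each component, so that the boundary periods follow the straight-line path $B_i'(t)=(1-t)B_i'+tB_i$. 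This path stays positive because $B_i,B_i'>0$, so the negative Liouville field keeps pointing strictly inward; and since the deformation is supported in $\mathcal{C}$ while every $L_i'\subset M\setminus\mathcal{C}$, each vanishing cycle stays exact, $\int_{L_i'}\theta_t'=0$, and is only Hamiltonian-isotoped. At $t=1$ the periods agree, $B_i=B_i''$, and a fibrewise Moser argument near $\partial M$ (the same collar normal-form technology used in Lemmas \ref{lem:RangeOfIntegratedValueOfThetaNONSEP} and \ref{lem:RangeOfIntegratedValueOfThetaSEP}) together with the induced re-identifying isomorphism of exact Lefschetz fibrations makes $\omega''=\omega$ and $\theta''=\theta$ on a collar of $C_i=C_i'$, both being canonical area forms with equal boundary flux.

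Since each step is either an isomorphism of exact Lefschetz fibrations or a deformation moving the vanishing cycles by Hamiltonian isotopy through exact Lagrangians, we obtain $Tw\mathcal{F}(\pi')^{\to}\simeq Tw\mathcal{F}(\pi'')^{\to}$, so it suffices to prove Theorem \ref{thm:InvOfFSCat} for $\pi$ and $\pi''$, which satisfy the asserted conditions. The main obstacle I expect is the period-changing step: one must run the boundary-supported deformation while simultaneously keeping $\omega_t'$ a positive area form (equivalently the Liouville field inward), keeping it \emph{canonical} near $\partial_h E$, and keeping every cycle exact. Positivity in the interpolation region requires enough symplectic room inside $\mathcal{C}$, which I would secure by enlarging the collar after pushing the $L_i'$ inward; this is legitimate because the total change of $[\omega_t']$ is irrelevant to $\mathcal{F}(\boldsymbol{L}^{\#})^{\to}$, the latter being defined purely combinatorially from intersection points and immersed polygons. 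A secondary point to verify is that $(E,\omega_t',\theta_t',J_t)\to D$ remains a genuine exact Lefschetz fibration for all $t$, i.e. that $\omega_t'$ stays canonical near $\partial_h E$ and compatible with some $(J_t,j)$-holomorphic structure integrable near $\text{Crit}(\pi)$.
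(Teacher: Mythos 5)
Your reduction goes in a genuinely different direction from the paper's. The paper does not deform $\omega'$ on the fixed surface at all: it \emph{enlarges} both fibres, gluing a cylinder $(-\varepsilon_1,R]\times C_i$ onto $M$ and a cylinder of the different length $B_i+R-B'_i$ onto $M'$, extends $\theta$, $\theta'$ over the new collars so that both new boundary circles have period $B_i+R$, and then identifies $\widetilde M$ with $\widetilde M'$ by a diffeomorphism matching the outer collars canonically. Since $R$ can be taken as large as one likes, there is unlimited room to absorb any mismatch of periods, and since the vanishing cycles and all immersed polygons live in the original $M$, the combinatorial definition gives a canonical isomorphism between the Fukaya--Seidel categories computed in $M$ and in $\widetilde M$. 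Your approach instead keeps the surface fixed and interpolates the boundary periods through a deformation of $(\omega',\theta')$ supported in a collar $\mathcal{C}$ disjoint from the $L'_i$.

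The gap is exactly at the step you flag as the ``main obstacle,'' and your proposed remedy does not close it. To move the period of $C_i$ from $B'_i$ to $B_i$ by a deformation supported in the collar component $\mathcal{C}_i$, Stokes forces $\int_{\mathcal{C}_i}\omega''=\int_{\mathcal{C}_i}\omega'+(B_i-B'_i)$, so when $B'_i>B_i$ you need a collar with $\int_{\mathcal{C}_i}\omega'>B'_i-B_i$ that avoids every $L'_j$; whether such a collar exists is a quantitative question about how much $\omega'$-area sits between $C_i$ and the vanishing cycles, and ``pushing the $L'_i$ inward'' by Hamiltonian isotopies cannot create that area, since such isotopies preserve it. The estimate you need is in fact true --- because each $L'_j$ is $\theta'$-exact, the component of $M\setminus\bigcup_j L'_j$ adjacent to $C_i$ has $\omega'$-area at least $B'_i$ by Stokes, so a sub-annulus of area exceeding $B'_i-B_i$ exists --- but this is the actual content of the reduction in your approach and it is missing from your argument. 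Two smaller points: the invariance of the category under your deformation should be justified by the combinatorial definition together with the fact that the symplectic connection, hence each vanishing cycle, is literally unchanged outside the product collar (Theorem \ref{thm:Invariance} only covers the auxiliary choices for a \emph{fixed} exact structure, so citing it here is circular); and your opening claim that re-identification cannot help is true for diffeomorphisms of the fixed surface but is precisely what the paper circumvents by changing the surface.
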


\begin{proof}

Let $R$ be a sufficiently large number such that for all $i$, we have $B_i + R > B'_i$.
Fix a collar neighbourhood $(-\varepsilon _1, 0] \times C_i \hookrightarrow M$. We will identify $(-\varepsilon _1, 0] \times C_i$ and its image in $M$. Now we assume that $\displaystyle \overline{\big( (-\varepsilon _1, 0] \times C_i \big)} \cap \left( \bigcup _i L_i \right) = \varnothing$ and the symplectic form can be written as  $\omega = dr \wedge d\varphi$ on the collar neighbourhood, where $r$ is the natural coordinate of $(-\varepsilon _1, 0]$ and $\varphi$ is a coordinate of $C_i$ with $\displaystyle \int _{C_i} d\varphi = 1$. 
We define $\widetilde{M}$ by gluing of $M$ and $\displaystyle \bigsqcup _i (-\varepsilon _1, R] \times C_i$ on $\displaystyle \bigsqcup_i (-\varepsilon _1, 0] \times C_i$.

Next, we extend $\omega$ and $\theta$ to $\widetilde{M}$.
We can write  $\theta = (r+B_i)d\varphi + df_i$ on the collar neighbourhood, since $\int _{C_i} \theta = B_i
= \int _{C_i} (r+B_i)d\varphi$ holds.
Extend $f_i$ to $(-\varepsilon
_1, R]$ such that $f_i(r, \varphi) = 0$ for $r > \varepsilon _1$ and set $\theta
= (r+B_i)d\varphi + df_i$.
Thus, we obtain a symplectic form on $\widetilde{M}$ which is written as $\omega = d\theta = dr \wedge d\varphi$.
Then, the negative Liouville vector field on $\{ R\} \times C_i$ is $\displaystyle -(B_i + R)\frac{\partial }{\partial r}$ hence it points strictly inwards.

Now, we repeat almost the same procedure for $M'$ but $\widetilde{M}'$ is constructed by gluing $M'$ and $(-\varepsilon _1, B_i + R - B'_i] \times C'_i$.
We identify $\widetilde{M}$ and $\widetilde{M}'$ by the diffeomorphism that is an extension of the identity map  $1_M \colon M \to M' (=M)$ and identifies $(R - \varepsilon _2, R] \times C_i$ and $(B_i + R - B'_i - \varepsilon _2, B_i + R - B'_i] \times C'_i$ in the canonical way.
By the above construction, we have $\displaystyle \int _{\{ R\} \times C_i} \theta = B_i + R = \int _{\{ B_i + R - B'_i\} \times C'_i} \theta$. Moreover, the symplectic forms and their primitives coincide on a collar neighbourhood of $\widetilde{M}$ and $\widetilde{M}'$ via the above diffeomorphism.

By the way, since the definition of the Fukaya-Seidel category is combinatorial, the Fukaya-Seidel category defined by using $L_i \subset M$ and that by using $L_i \, \subset (M \hookrightarrow ) \widetilde{M}$ are canonically isomorphic (as directed $A_\infty$-categories). Thus we have the conclusion of this lemma.

\hfill $\Box$

\end{proof}

From now on, we will prove theorem \ref{thm:InvOfFSCat} in the case of lemma \ref{lem:valueOfThetaOnBoundary}.

\begin{prop}\label{prop:VCMove}

In the setting of lemma \ref{lem:valueOfThetaOnBoundary}, there exist $\phi _1 , \dots , \phi _N \in \text{Ham}(M , \partial M, \omega)$, and
$\phi '_1, \dots , \phi '_N \in \text{Ham}(M, \partial M, \omega ')$, and $f \in \text{Diff}_0 (M, \partial M)$ such that
for all $i = 1, 2, \dots , N$, $L_i = \phi _i \circ f \circ \phi '_i (L'_i)$ and $\{ \phi '_1(L_1) , \dots , \phi '_N(L'_N) \}$ are in general position.
Here $\text{Diff}_0 (M, \partial M)$ is the identity component of the group of the diffeomorphisms of $M$ supported in  $\mathring{M}$.

\end{prop}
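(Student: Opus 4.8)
The plan is to produce the three families of maps in the order $\phi_i'$, then $f$, then $\phi_i$, realising them respectively by a Hamiltonian perturbation into general position, by an \emph{exact} Moser interpolation between the two symplectic structures, and finally by Lemma~\ref{lem:HamMove}. Throughout I work in the normalised situation of Lemma~\ref{lem:valueOfThetaOnBoundary}, so that $B_i = B_i'$ and $\omega = \omega'$, $\theta = \theta'$ on a fixed collar of $\partial M$.

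First I would choose $C^\infty$-small Hamiltonian isotopies $\phi_i' \in \mathrm{Ham}(M,\partial M,\omega')$ to push the collection $\boldsymbol{L}'$ into general position, i.e. all intersections transverse and no triple points; this is a standard transversality perturbation. Since Hamiltonian isotopies preserve exactness, each $\phi_i'(L_i')$ still satisfies $\int_{\phi_i'(L_i')}\theta' = 0$ and, being isotopic to $L_i'$, remains free homotopic to $L_i$. This secures the general-position clause of the statement.

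The heart of the argument, and the step I expect to be the main obstacle, is the construction of $f$ as an \emph{exact} symplectomorphism rather than merely a symplectomorphism. By Stokes and the normalisation, $\int_M \omega = \sum_i B_i = \sum_i B_i' = \int_M \omega'$, so $\omega$ and $\omega'$ are positive area forms of equal total area agreeing near $\partial M$. I would run Moser's argument on $\omega_t = (1-t)\omega' + t\omega$ using the \emph{specific} primitive $\beta := \theta - \theta'$ of $\dot\omega_t = \omega - \omega'$: note $\beta$ is supported in $\mathring{M}$. Defining $Y_t$ by $\iota_{Y_t}\omega_t = -\beta$ and letting $f \in \mathrm{Diff}_0(M,\partial M)$ be the time-one flow gives $f^*\omega = \omega'$, with $f$ supported in $\mathring M$ because $\beta$ vanishes near the boundary. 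The crucial point is that this $f$ is exact: writing $\theta_t = (1-t)\theta' + t\theta$ one computes $\frac{d}{dt}\bigl(f_t^*\theta_t\bigr) = f_t^*\bigl(d\iota_{Y_t}\theta_t + \iota_{Y_t}\omega_t + \beta\bigr) = d\bigl(f_t^*(\iota_{Y_t}\theta_t)\bigr)$, so $f^*\theta - \theta'$ is exact. Hence $\int_C (f^*\theta - \theta') = 0$ for every closed curve $C$, and therefore each $\Lambda_i := f\circ\phi_i'(L_i')$ satisfies $\int_{\Lambda_i}\theta = \int_{\phi_i'(L_i')}\theta' = 0$; that is, $\Lambda_i$ is an exact Lagrangian for $(\omega,\theta)$, free homotopic to $L_i$, and $\{\Lambda_i\}$ is still in general position since $f$ is a diffeomorphism.

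Finally, for each $i$ both $\Lambda_i$ and $L_i$ are exact (the latter by Lemma~\ref{lem:VCAndTheta}) and free homotopic Lagrangian circles in $(M,\omega,\theta)$, so Lemma~\ref{lem:HamMove} furnishes $\phi_i \in \mathrm{Ham}(M,\partial M,\omega)$ with $\phi_i(\Lambda_i) = L_i$, giving exactly $L_i = \phi_i \circ f \circ \phi_i'(L_i')$. The two routine points I would verify are that $\omega_t$ stays nondegenerate, which is automatic on an oriented surface as a positive combination of positive area forms, and that $\beta \equiv 0$ near $\partial M$, which guarantees $f \in \mathrm{Diff}_0(M,\partial M)$ as required.
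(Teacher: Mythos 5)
Your proof is correct, but it takes a genuinely different route from the paper's. The paper keeps $f = 1_M$ whenever $\omega'$ is not a constant multiple of $\omega$ and instead puts all the work into the $\phi'_i$: it argues by induction on $i$, splits into the cases where $L'_i$ is non-separating (winding $L'_i$ inside an annulus around a dual curve where the two forms have unequal areas on the two sides) and separating (the ``blowing balloons'' deformation into regions where $(\omega-\omega')/\omega'$ has a definite sign), and uses the intermediate value theorem to force $\int_{\phi'_i(L'_i)}\theta = 0$ before invoking Lemma \ref{lem:HamMove}; the diffeomorphism $f$ is only inserted, as a non-symplectic map, to break the proportionality $\omega' = k\omega$ in the remaining case. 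You instead concentrate everything into a single exact Moser isotopy: since Lemma \ref{lem:valueOfThetaOnBoundary} makes $\beta = \theta - \theta'$ vanish near $\partial M$, the flow of $\iota_{Y_t}\omega_t = -\beta$ gives $f \in \mathrm{Diff}_0(M,\partial M)$ with $f^*\omega = \omega'$ \emph{and} $f^*\theta - \theta'$ exact, so $f$ carries $\theta'$-exact circles to $\theta$-exact circles and the $\phi'_i$ are needed only for general position. This eliminates the entire case analysis and is considerably shorter; what the paper's argument buys in exchange is that it stays at the level of individual curves and elementary area comparisons, never producing a global symplectomorphism between $(M,\omega')$ and $(M,\omega)$. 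One small point common to both arguments: Lemma \ref{lem:HamMove} is stated for \emph{isotopic} circles while the hypothesis delivers only free homotopy of $L_i$ and $L'_i$; this is harmless because freely homotopic, homologically essential embedded circles on a surface are ambient isotopic, but you (like the paper) are implicitly using that fact.
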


From the above proposition, we can derive the desired equivalence as follows.
First, we give $L_i$ and $L'_i$ brane structures those can be identified via the isotopy given by $\phi _i \circ f \circ \phi '_i$.
Then $L_i, L'_i$ have the brane orientations satisfying $[L_i] = [L'_i] \in [S^1, M]$.
Set $\phi '(\boldsymbol{L}'{}^{\# }) \coloneqq (\phi '_1 (L'_1{}^{\# }) , \dots , \phi '_N(L'_N{}^{\# }) \, )$ and 
$f \circ \phi '(\boldsymbol{L}'{}^{\# }) \coloneqq (f \circ \phi '_1 (L'_1{}^{\# }) , \dots , f \circ \phi '_N(L'_N{}^{\# }) \, )$. Then we can write 
$\boldsymbol{L}^{\# } = \phi \circ f \circ \phi '(\boldsymbol{L}'{}^{\# })$.
The following argument shows that $\boldsymbol{L}'{}^{\# }$ and $\phi ' (\boldsymbol{L}'{}^{\# })$ define the equivalent derived categories.
Since every Lagrangian brane of $\phi ' (\boldsymbol{L}'{}^{\# })$ can be moved into the corresponding Lagrangian brane of $\boldsymbol{L}'^{\#}$ by Hamiltonian diffeomorphism of $(M', \omega ', \theta ')$ respectively, they are pairwise quasi-isomorphic in $Fuk(M')$.
Recall that $\mathcal{F}(\boldsymbol{L}'^{\#})^\to = Fuk(M')^\to(\boldsymbol{L}'^{\#})$ and $\mathcal{F}(\phi'(\boldsymbol{L}'^{\#}))^\to = Fuk(M')^\to(\phi'(\boldsymbol{L}'^{\#}))$.
Hence, by lemma \ref{lem:equivOfDirSubCat}, $\mathcal{F}(\boldsymbol{L}'^{\#})^\to$ and $\mathcal{F}(\phi'(\boldsymbol{L}'^{\#}))^\to$ are quasi-isomorphic.
Similarly, $\mathcal{F}(\boldsymbol{L}^{\#})^{\to}$ and $\mathcal{F}(f \circ \phi ' (\boldsymbol{L}'{}^{\#}))^{\to}$ are quasi-isomorphic since all Lagrangian branes are  related by Hamiltonian diffeomorphism in $(M, \omega , \theta)$.
On the other hand, since the definition of Fukaya-Seidel category is combinatorial, the Fukaya-Seidel category defined by using $\phi ' (\boldsymbol{L}'{}^{\# })$ and that by using $f \circ \phi ' (\boldsymbol{L}'{}^{\# })$ are naturally isomorphic.
Hence, we have quasi-isomorphisms between four directed $A_{\infty}$-categories associated with collections of Lagrangian branes $\boldsymbol{L}^{\# }, \boldsymbol{L}'{}^{\# }, \phi ' (\boldsymbol{L}'{}^{\# }) $, and $f \circ \phi ' (\boldsymbol{L}'{}^{\# })$ thus we have the quasi-equivalences of their $Tw \mathcal{F}(-)^{\to}$ by Fact \ref{rem:DerEquiv}.

Before we start to prove Proposition \ref{prop:VCMove}, we introduce the following two lemmas:

\begin{lem}\label{lem:unbalancedWeinstein}

Suppose that our two symplectic forms $\omega $ and $\omega '$ on $M = M'$ satisfy the following condition: 
for any Lagrangian submanifold $N \simeq S^1 \hookrightarrow M$ and any of its tublar neighbourhood $\iota \colon S^1 \times (-\varepsilon _3, \varepsilon _3) \hookrightarrow M$ such that $\iota ^* \omega ' = d\varphi \wedge dx$, the equality $\displaystyle \int _{S^1 \times (-\varepsilon _3, 0)} \iota ^* \omega \, = \int _{S^1 \times (0, \varepsilon _3)} \iota ^* \omega$ holds. Here $\varphi$ is a coordinate
of $S^1$ and $x$ is a coordinate of $(-\varepsilon _3, \varepsilon _3)$.
Then, there exists $k>0$ such that $\omega ' = k \omega$.

\end{lem}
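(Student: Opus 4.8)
The plan is to reduce the statement to the claim that a single comparison function is constant, and then to exploit the large amount of freedom hidden in the phrase ``any tubular neighbourhood with $\iota^{*}\omega' = d\varphi\wedge dx$''. Since $M=M'=\Sigma_{g,k}$ is an oriented surface, both $\omega$ and $\omega'$ are nowhere-vanishing, orientation-compatible $2$-forms, so I may write $\omega = g\,\omega'$ for a unique smooth $g\colon M\to\mathbb{R}_{>0}$. Proving the lemma is then equivalent to proving that $g$ is constant, for if $g\equiv c$ then $\omega'=(1/c)\,\omega$ with $k:=1/c>0$.

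First I would extract an infinitesimal consequence of the balancing hypothesis. Fix an embedded circle $N$ (automatically Lagrangian on a surface, as a $2$-form restricts to $0$ on any curve) together with flux coordinates $(\varphi,x)$ on a tube, so that $\iota^{*}\omega' = d\varphi\wedge dx$, $N=\{x=0\}$, and hence $\iota^{*}\omega = g\,d\varphi\wedge dx$. Writing $G(x):=\int_{S^1} g(\varphi,x)\,d\varphi$, the hypothesis applied to all sub-tubes $S^1\times(-\varepsilon_3,\varepsilon_3)$ of a single admissible $\iota$ reads $\int_{-\varepsilon_3}^{0}G = \int_{0}^{\varepsilon_3}G$ for every small $\varepsilon_3$; differentiating in $\varepsilon_3$ gives $G(-\varepsilon_3)=G(\varepsilon_3)$, hence $G'(0)=0$, that is $\int_N (\partial_x g)\,d\varphi = 0$. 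Thus for \emph{every} admissible coordinate system I obtain $\int_N dg(\partial_x)\,d\varphi = 0$, where $\partial_x|_N$ is the normalized transverse field fixed by $\omega'(\partial_\varphi,\partial_x)=1$.

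The crucial step, and the main source of strength, is that $\partial_x|_N$ is far from unique: for any smooth $c\colon N\to\mathbb{R}$ the sheared field $\partial_x + c(\varphi)\,\partial_\varphi$ is still normalized, and it is realized by a genuine coordinate system. Indeed, given $N$, a parametrization $\varphi$, and such a transverse field $V$, I can extend $V$ to a collar $\Phi\colon S^1\times(-\delta,\delta)\to M$ with $\partial_x\Phi|_{x=0}=V$, pull back $\Phi^{*}\omega' = u\,d\varphi\wedge dx$ with $u|_{x=0}=1$, and replace $x$ by $\tilde x(\varphi,x):=\int_0^x u(\varphi,s)\,ds$; then $d\varphi\wedge d\tilde x = \Phi^{*}\omega'$, $N=\{\tilde x=0\}$, and $\partial_{\tilde x}|_N = V$. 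Feeding these coordinates into the infinitesimal consequence yields $\int_N dg(V)\,d\varphi = 0$ for every such $V$. Subtracting the $c=0$ case leaves $\int_N c(\varphi)\,(\partial_\varphi g)\,d\varphi = 0$ for all $c$, whence $\partial_\varphi g\equiv 0$ on $N$: the function $g$ is constant along every embedded circle.

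Finally I would let $N$ vary. If $g$ were non-constant there would be a point $p$ with $dg_p\neq 0$; choosing any embedded circle through $p$ whose tangent escapes $\ker dg_p$ gives $\partial_\varphi g(p)\neq 0$, contradicting the previous paragraph. Hence $dg\equiv 0$, and since $M$ is connected $g$ is a positive constant, giving $\omega'=k\omega$ with $k>0$. I expect the only delicate points to be technical: verifying that the sheared coordinates can genuinely be produced (handled by the explicit $\tilde x$ above) and that through each point and each direction there is an admissible embedded circle. The conceptual heart of the argument — and the thing to get right — is the realization that varying the flux normalization $\varphi$ promotes one scalar balancing identity into full pointwise control of $dg$.
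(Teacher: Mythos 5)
The paper explicitly omits its own proof of this lemma (it is declared ``elementary''), so there is nothing to compare your argument against; judged on its own, your proof is correct and complete. Writing $\omega = g\,\omega'$ and reducing to the constancy of $g>0$ is the natural move; the extraction of the infinitesimal identity $\int_N dg(\partial_x)\,d\varphi = 0$ is sound, since the restriction of an admissible tube to $S^1\times(-\varepsilon,\varepsilon)$ is again an admissible tube, so the balancing holds for all sub-tubes and $G$ is even near $0$; and the key observation --- that the normalization $\iota^*\omega'=d\varphi\wedge dx$ pins down the transverse field along $N$ only up to shears $\partial_x\mapsto\partial_x+c(\varphi)\partial_\varphi$, each realized by a genuine admissible chart via your reparametrization $\tilde x=\int_0^x u$ --- correctly upgrades the single integral identity to $\partial_\varphi g\equiv 0$ on every embedded circle, hence $dg\equiv 0$ on the connected surface $M$. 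Two small remarks. First, you should note (as you implicitly do) that on a surface every embedded circle is Lagrangian and, by your own $\tilde x$ construction applied to an arbitrary collar, admits at least one admissible tube, so the hypothesis is genuinely available for the small circles through an arbitrary point needed in your last step. Second, your argument proves exactly the stated ``for any $N$'' version; it does not by itself yield the ``stronger version'' (hypothesis for a single $N$) that the paper asserts without proof immediately afterwards and then invokes in the proof of Proposition \ref{prop:VCMove}, but that strengthening lies outside the statement you were asked to prove.
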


The proof of this lemma is elementary so we omit it.
In fact, the stronger version also holds, namely, the hypothesis of the lemma can be changed from ``for any $N$" to ``for one $N$".

\begin{lem}\label{lem:HamMove}

Let $M \coloneqq (\Sigma _{g, k} , \omega , \theta)$ be a two dimensional exact symplectic manifold and $L_1 ,
L_2 \subset \mathring{M}$ be its Lagrangian submanifolds diffeomorphic to $S^1$.
Suppose that $L_1$ and $L_2$ are isotopic each other, both homologically non-zero, and $\int _{L_1} \theta = \int _{L_2} \theta$.
Then, there exists $\phi \in \text{Ham}(M , \partial M , \omega)$ such that $\phi (L_2) = L_1$.

\end{lem}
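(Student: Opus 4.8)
The plan is to realize the desired $\phi$ as the time-one map of an ambient Hamiltonian flow supported in $\mathring M$, obtained from an isotopy between $L_2$ and $L_1$ whose \emph{flux} is forced to vanish by the hypothesis $\int_{L_1}\theta = \int_{L_2}\theta$. First I would note that, since $\dim_{\mathbb R}M = 2$, any embedded circle is automatically Lagrangian ($\omega$ restricts to zero on a $1$-manifold), so the word ``Lagrangian'' adds nothing and, crucially, \emph{any} smooth isotopy through embeddings is automatically a Lagrangian isotopy. As $L_1, L_2 \subset \mathring M$ are isotopic, I would fix a smooth family of embeddings $j_t \colon S^1 \to \mathring M$, $t \in [0,1]$, with $j_0(S^1) = L_2$ and $j_1(S^1) = L_1$, constant near $\partial M$, and set $L_t = j_t(S^1)$.

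Let $X_t$ be the vector field along $L_t$ generating the isotopy and put $\alpha_t \coloneqq \omega(X_t, \cdot)|_{L_t}$, a closed $1$-form on $L_t$ (closedness is exactly the Lagrangian condition). The key computation uses Cartan's formula together with $d\theta = \omega$: since $\mathcal L_{X_t}\theta = d(\theta(X_t)) + \omega(X_t, \cdot)$ and $L_t \cong S^1$ is closed, the exact part integrates to zero, giving
\[
\frac{d}{dt}\int_{L_t}\theta \;=\; \int_{L_t}\alpha_t \;=\; \big\langle [\alpha_t],\, [L_t] \big\rangle .
\]
Integrating over $t$ yields $\int_{L_1}\theta - \int_{L_2}\theta = \big\langle \int_0^1 [\alpha_t]\,dt,\ [S^1]\big\rangle$, and the right-hand pairing is precisely the flux of the isotopy evaluated on the fundamental class. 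Since evaluation $H^1(S^1;\mathbb R) \to \mathbb R$ is an isomorphism, the hypothesis $\int_{L_1}\theta = \int_{L_2}\theta$ forces the flux $\int_0^1 [\alpha_t]\,dt$ to vanish in $H^1(S^1;\mathbb R)$.

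Next I would upgrade this zero-flux Lagrangian isotopy to an ambient Hamiltonian isotopy. Using Weinstein's Lagrangian neighbourhood theorem I would identify a neighbourhood of $L_t$ with a neighbourhood of the zero section in $T^*S^1$, in which nearby Lagrangian circles are graphs of closed $1$-forms; working chart by chart along the isotopy and reparametrizing by linear interpolation of these forms (legitimate because the difference of the endpoint cohomology classes is the flux, which is zero) I arrange that the generating form $\alpha_t$ lies in the trivial class, hence $\alpha_t = dg_t$ is exact for every $t$. Extending $g_t$ to a smooth family $H_t \in C^\infty(M)$ compactly supported in $\mathring M$ with $dH_t|_{L_t} = \alpha_t$, the Hamiltonian flow $\phi_t$ of $H_t$ satisfies $\phi_t(L_2) = L_t$ by a standard Moser-type argument, and $\phi \coloneqq \phi_1 \in \text{Ham}(M,\partial M,\omega)$ is the required diffeomorphism.

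The main obstacle is exactly this last upgrade. The flux computation only gives that the \emph{integrated} class $\int_0^1 [\alpha_t]\,dt$ vanishes, whereas producing a genuine Hamiltonian requires the velocity form $\alpha_t$ to be exact at each individual time; reconciling the two is where the choice of a good parametrization (linear interpolation in Weinstein charts) and the globalization of the local Hamiltonians to functions compactly supported in $\mathring M$ must be handled carefully, checking in particular that the cut-offs near $\partial M$ reintroduce no flux. The flux identity of the second paragraph is short and is the conceptual heart, but everything delicate is concentrated in turning the vanishing of this single real number into a globally defined, boundary-trivial Hamiltonian flow; this is the Lagrangian analogue of Banyaga's characterization of Hamiltonian diffeomorphisms by vanishing flux and may alternatively be quoted from the symplectic-topology literature.
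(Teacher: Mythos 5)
The paper gives no proof of this lemma (it is dismissed as ``well-known''), so your argument must stand on its own, and it has a genuine gap at exactly the step you flag as delicate. The flux computation in your second paragraph is correct: $\int_{L_1}\theta-\int_{L_2}\theta=\int_0^1\bigl(\int_{L_t}\alpha_t\bigr)dt$, so the hypothesis kills the \emph{total} flux. But to generate the isotopy by an ambient Hamiltonian you need the instantaneous flux $c(t)=\int_{L_t}\alpha_t$ to vanish for \emph{every} $t$, equivalently $F(t)=\int_0^t c(s)\,ds\equiv 0$, and chart-by-chart linear interpolation in Weinstein neighbourhoods does not achieve this. Linear interpolation between $L_t$ and $L_{t+\delta}$ inside a Weinstein chart of $L_t$ produces a path whose flux is the difference of the cohomology classes of the two graphs, which (pairing with the fundamental class) equals the \emph{partial} flux $F(t+\delta)-F(t)$; this is not zero in general, and only the telescoping sum over all of $[0,1]$ vanishes. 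Reparametrizing time cannot help, since the class of the velocity form transforms by $c(\tau(s))\tau'(s)$. The one situation where your interpolation idea does work is when the entire isotopy stays inside a single Weinstein annulus of $L_1$ with every $L_t$ a graph --- but reducing to that situation by a Hamiltonian isotopy is precisely the hard content of the lemma, not something you may assume.

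The clearest symptom that something is missing is that your proof never uses the hypothesis that $L_1,L_2$ are homologically non-zero, and that hypothesis is exactly what is needed to repair the argument. To kill $F(t)$ pointwise one must deform the path of Lagrangians rel endpoints by a $t$-dependent normal displacement shifting $\int_{L_t}\theta$ by $-F(t)$, and the available range of such shifts depends on the topology of $L_t$: compare Lemmas \ref{lem:RangeOfIntegratedValueOfThetaNONSEP} and \ref{lem:RangeOfIntegratedValueOfThetaSEP} of the paper, where a non-separating curve admits shifts by arbitrary real numbers but a separating homologically non-trivial curve only within the bounded interval $(-B_2,B_1)$ (one then has to check $F(t)$ stays in range, which holds because $\int_{L_t}\theta$ and the target value both lie in that interval). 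Alternatively, replace the second half of your argument by the standard surface-topology route: make $L_1,L_2$ transverse, cancel bigons by Hamiltonian isotopies until $L_2$ sits in a Weinstein annulus around $L_1$ as the graph of a closed one-form, whose class is then $\int_{L_2}\theta-\int_{L_1}\theta=0$, and finish by interpolating the resulting exact form; essentialness of the curves is what makes the bigon/annulus combinatorics work. Your remaining points --- circles in a surface are automatically Lagrangian, and the extension of $g_t$ to a compactly supported Hamiltonian introduces no flux --- are fine.
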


This lemma is a well-known result, so we omit the proof too.

Now, we begin the proof of Proposition \ref{prop:VCMove}.
We set $E_i \coloneqq \int _{L'_i} \theta$. Recall that those values are not zero in general.
We show it by bearing it into two cases.

First, we consider the case that there does not exist $k>0$ such that $\omega ' = k \omega$.
We set $f$ to be the identity map $f = 1_M$.
We will show it by  induction on $i$, namely we will prove step by step that there exist $\phi _j \in \text{Ham}(M, \partial M, \omega)$ and $\phi '_j \in \text{Ham}(M, \partial M, \omega ')$ for $1 \leq j \leq i$ such that $L_j = \phi _j \circ \phi '_j (L'_j)$ and $\{ \phi '_1 (L'_1) , \dots \phi '_i(L'_i) , L'_{i+1} , \dots , L'_N\}$ are in general position.
Now, we construct such $\phi_i$ and $\phi'_i$ under the hypothesis that we already have such $\phi_j, \phi'_j$ for $j < i$. It is enough to show that there exists $\phi '_i \in \text{Ham}(M, \partial M, \omega ')$ satisfying the conditions of general position and $\int _{\phi '_i (L'_i)} \theta = 0$,
since we can find $\phi _i \in \text{Ham}(M, \partial M, \omega)$ such that $L_i = \phi _i \circ \phi '_i (L'_i)$ by lemma \ref{lem:HamMove}.

In the case when $E_i = 0$, it is enough to  set $\phi '_i = 1_M$.
In this case, the condition of general position automatically holds.
Now we consider the case $E_i \neq 0$.
We again bear it in the following two cases: the case that $L'_i$ is non-separating and the case that $L'_i$ is separating.

In the first case, choose a Lagrangian $S^1$, $N \hookrightarrow M$, such that $N \pitchfork L$ and $N \cap L = \{ pt. \}$.
From (the strong version of) Lemma \ref{lem:unbalancedWeinstein} we can find  a tublar neighbourhood $\iota \colon S^1 \times (-\varepsilon _4, \varepsilon _4) \to M$ of $N$ such that $\iota ^* \omega ' = d\varphi \wedge dx$, and $\int _{S^1 \times (-\varepsilon _4 , 0)} \iota ^* \omega \, \neq \int _{S^1 \times (0, \varepsilon _4)} \iota ^* \omega$.
We can find such a tublar neighbourhood among the neighbourhoods satisfying that $\iota ^{-1}(L) = \{ 0 \} \times (-\varepsilon_4, \varepsilon_4)$, so we assume this condition.
We set $A \coloneqq \int _{S^1 \times (0, \varepsilon _4)} \iota ^* \omega \, - \int _{S^1 \times (-\varepsilon _4, 0)} \iota ^* \omega \, \neq 0$.

Now, we take $0 < \varepsilon _5 \ll \varepsilon_4$ and a function $\overline{h} \colon (0 , \varepsilon _4) \to \mathbb{R}_{\geq 0}$ such that $\lim _{t \to +0} \overline{h}^{(k)}(t) = \lim _{t \to +0} \overline{h}^{(k)}(\varepsilon _{4}- t) = 0$ for any $k \geq 0$,
$\overline{h}|_{(\varepsilon _5, \varepsilon _4- \varepsilon _5)} = 1$, $\overline{h}'|_{(0, \varepsilon _5)} > 0$, and
$\overline{h}'|_{(\varepsilon _4- \varepsilon _5, \varepsilon _4)} < 0$.

Using this function, we define a smooth function $h \colon (-\varepsilon _4 , \varepsilon _4) \to \mathbb{R}$ by $h(x) = \text{sign}(x) h(|x|)$.
Here, $\text{sign}(x)$ is a sign function, that is defined to be $\pm 1$ corresponding to the sign of $x$ and  $0$ if $x = 0$.
Then, we consider the following Hamiltonian $H_i$.

\begin{eqnarray*}
H_i(p) = 
\begin{cases}
0 & p \notin \iota ( S^1 \times (-\varepsilon _4, \varepsilon _4) \,   \\
\int _{-\varepsilon _4}^x h(y) \, dy & p = \iota (s, x)
\end{cases}
\end{eqnarray*}

Now we write the Hamiltonian diffeomorphism generated by $H_i$ with respect to $\omega '$ by $\phi '_{H_i}$.
Then, we have $\int _{\phi '_{H_i}(L'_i)} \theta \, - \int _{L'_i} \theta \, 
= \int _{\{ (s, x) | s = h(x)\} \subset S^1 \times (-\varepsilon _4, \varepsilon _4)} \iota ^* \theta \, - \int _{\{ (s, x) | s = 0 \} \subset S^1 \times (-\varepsilon _4, \varepsilon _4)} \iota ^* \theta \, 
= - \int _{\{ (s, x) | 0 \leq s \leq h(x) , 0 \leq x \leq \varepsilon _4\}} \tilde \iota {}^* \omega \, +\int _{\{ (s, x) | h(x) \leq s \leq 0 , -\varepsilon _4 \leq x \leq 0 \}} \tilde \iota {}^* \omega$.
Here, $\tilde \iota$ is the composition $\mathbb{R} \times (-\varepsilon _4, \varepsilon _4) \to S^1 \times (-\varepsilon _4, \varepsilon _4) \xrightarrow{\iota} M$.
This value can be arbitrary close to $-A$ by getting $\varepsilon _5$ small.
Likewise, for the case of $\phi '_{H_i}{}^n \, ( = \phi '_{nH_i} )$, the value $\int _{\phi '_{H_i}{}^n(L'_i)} \theta \, - \int _{L'_i} \theta$ can be arbitrary close to $-nA$ by getting $\varepsilon _5$ small.

Now, for example, we consider the case $A, E_i >0$.
If $n$ is large enough and $\varepsilon _5$ is small enough, then $\int _{\phi '_{H_i}{}^n (L'_i)}\theta = \int _{\phi'_{nH_i} (L'_i)}\theta$ is negative.
Hence, by the intermediate value theorem, there exists $a_i \in (0, n)$ such that $\int _{\phi '_{a_i H_i}(L'_i)} \theta \, = 0$.

Finally, we only have to check the condition of general position.
When the above $\phi '_{a_i H_i}$ does not achieve the condition of general position, we can achieve the condition by modifying the tubular neighbourhood and the function $\overline{h}$.
In the case that the signs of $A$ and $E_i$ are different, we can prove in the same way.

Next, we consider the second case that $L'_i$ is separating.
If $E_i = 0$, it is enough to set $\phi '_i = 1_M$ as in the case that $L'_i$ is non separating, so we consider the case $E_i \neq 0$.
First, we name the connected components of $M \setminus L'_i$. The one which is located the right side of $L'_i$ with respect to the orientation of $L'_i$ and $M$ is denoted by $M_1$ and the other one by $M_2$.
Set $S_j \coloneqq \partial M \cap M_j$ for $j = 1, 2$.
When we set $A_j \coloneqq \int _{S_j} \theta \, \left( = \int _{S_j} \theta ' \right)$, we have $\int _{M_1} \omega ' = A_1, \int _{M_2} \omega ' = A_2, \int _{M_1} \omega = A_1 - E_i, \int _{M_2} \omega = A_2 + E_i$ by the Stokes' theorem.

In the following discussion, we only consider the case $E_i > 0$.
Define a function $g$ on $M$ by $\omega - \omega ' = g\omega '$.
Then we have $\int _{M_1} g\omega ' = -E_i, \int _{M_2} g\omega ' = E_i$. Set $U_1 \coloneqq \{ p \in M_1 | g(p) < 0\}$ and $U_2 \coloneqq \{ p \in M_2 | g(p) > 0\} \neq \varnothing$. Then we have $\int _{U_1} g\omega ' \leq - E_i, \int _{U_2} g\omega ' \geq E_i$.
Let us consider the region $K_1 \coloneqq \{ p \in M_1 | g(p) \leq -\varepsilon _6\}$, and $K_2 \coloneqq \{ p \in M_2
| g(p) \geq \varepsilon _6 \}$.
If we choose $\varepsilon _6$ to be so small and $\pm \varepsilon _6$ be the regular values of $f$, 
then we have $\int _{K_1} g\omega ' \leq - (1 - \varepsilon _7) E_i, \int
_{K_2} g\omega ' \geq (1 - \varepsilon _7) E_i$ for some small $\varepsilon_7 > 0$, and $K_j$'s are compact two dimensional manifolds with boundary with some connected components.
Before proceeding the discussion, we prove the following lemma.

\begin{lem}

The ratio $\varepsilon_7 / \varepsilon_6$ can arbitrarily be small.

\end{lem}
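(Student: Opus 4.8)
The plan is to make the admissible value of $\varepsilon_7$ explicit and then show that it is $o(\varepsilon_6)$. The guiding idea is that $K_j$ differs from $U_j$ only by the thin shell $\{\,0<|g|<\varepsilon_6\,\}\cap M_j$, on which the integrand $g\,\omega'$ has size at most $\varepsilon_6$; hence almost none of the mass $\int_{U_j}g\,\omega'$ is lost when we pass from $U_j$ to $K_j$, and the discrepancy is controlled by $\varepsilon_6$ times the $\omega'$-area of that shell, which itself shrinks to $0$.

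First I would quantify the loss. Since $K_1\subset U_1$ and $g<0$ on $U_1$, I would write $\int_{K_1}g\,\omega' = \int_{U_1}g\,\omega' + \int_{U_1\setminus K_1}(-g)\,\omega'$, and bound the last term by $\varepsilon_6\int_{U_1\setminus K_1}\omega'$ using that $0<-g<\varepsilon_6$ on $U_1\setminus K_1=\{-\varepsilon_6<g<0\}\cap M_1$. Together with $\int_{U_1}g\,\omega'\le -E_i$ this gives $\int_{K_1}g\,\omega'\le -E_i+\varepsilon_6\int_{U_1\setminus K_1}\omega'$, and symmetrically $\int_{K_2}g\,\omega'\ge E_i-\varepsilon_6\int_{U_2\setminus K_2}\omega'$. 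I would then simply set $\varepsilon_7 := \tfrac{\varepsilon_6}{E_i}\max\bigl\{\int_{U_1\setminus K_1}\omega',\,\int_{U_2\setminus K_2}\omega'\bigr\}$, so that the two required inequalities $\int_{K_1}g\,\omega'\le -(1-\varepsilon_7)E_i$ and $\int_{K_2}g\,\omega'\ge (1-\varepsilon_7)E_i$ hold by construction, while $\varepsilon_7/\varepsilon_6 = \tfrac{1}{E_i}\max\bigl\{\int_{U_1\setminus K_1}\omega',\,\int_{U_2\setminus K_2}\omega'\bigr\}$.

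It then remains to show that the shell areas $\int_{U_j\setminus K_j}\omega'$ tend to $0$ as $\varepsilon_6\to 0$. For this I would observe that the shells $\{-\varepsilon_6<g<0\}\cap M_j$ decrease as $\varepsilon_6\downarrow 0$ and that their intersection over all $\varepsilon_6>0$ is empty, since a point lying in every shell would satisfy both $g<0$ and $g\ge 0$. As $\omega'$ is a finite positive measure on the compact piece $M_j$, continuity of measure from above forces $\int_{U_j\setminus K_j}\omega'\to 0$, and therefore $\varepsilon_7/\varepsilon_6\to 0$, which is the assertion.

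The argument is essentially elementary, so I do not anticipate a serious obstacle; the only point requiring a little care is the final limit, where one must verify that the area form is genuinely a finite measure and that the nested shells shrink to a null (here empty) set rather than to some positive-measure level set of $g$. Note that the regular-value hypothesis on $\pm\varepsilon_6$ plays no part in this estimate — it is needed only to guarantee that each $K_j$ is a manifold with boundary — and may therefore be disregarded here.
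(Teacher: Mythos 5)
Your proposal is correct and follows essentially the same route as the paper: you bound $\int_{K_j}g\,\omega'-\int_{U_j}g\,\omega'$ by $\varepsilon_6$ times the $\omega'$-area of the shell $\{0<|g|<\varepsilon_6\}\cap M_j$ and take $\varepsilon_7$ proportional to $\varepsilon_6$ times the maximum of the two shell areas, exactly as in the text. The only difference is that you supply an explicit justification (continuity from above of the finite measure $\omega'$ on the nested shells, whose intersection is empty) for the final limit $\int_{U_j\setminus K_j}\omega'\to 0$, which the paper merely asserts.
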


\begin{proof}

By the definition of $\varepsilon_7$, it is enough to satisfy these inequalities $\int _{K_1} g\omega
' \, -(-E_i)\leq \varepsilon _7 E_i$ and $\int_{K_2} g\omega ' \, - E_i
\geq - \varepsilon _7 E_i$. Consider the following inequality 
\begin{align*}
\int _{K_1} g\omega' \, -(-E_i) &\leq \int_{K_1} g\omega' - \int_{U_1} g\omega'
\\
&= \int_{U_1 \setminus K_1} -g\omega' \\
&= \int_{-\varepsilon_6 < g(p) < 0, p \in M_1} -g\omega' \\
&\leq \varepsilon_6 \int_{-\varepsilon_6 < g(p) < 0, p \in M_1} \omega',
\end{align*}
and 
\begin{equation*}
\int_{K_2} g\omega ' \, - E_i \geq - \varepsilon_6 \int_{0 < g(p) < \varepsilon_6,
p \in M_2} \omega'.
\end{equation*}
So, $\varepsilon_7$ is enough to satisfy that $\displaystyle \varepsilon_6  \cdot
\text{Max} \left\{ \int_{-\varepsilon_6 < g(p) < 0, p \in M_1} \omega'  \,
, \int_{0 < g(p) < \varepsilon_6, p \in M_2} \omega' \right\} \leq \varepsilon_7
E_i$.
Now, the above two integrals in $\text{Max}$ converge to $0$ when
$\varepsilon_6 \to 0$. Thus the ratio $\varepsilon_7 / \varepsilon_6$ can arbitrarily be small.
\hfill $\Box$

\end{proof}

Set the connected component decomposition $K_j = \bigsqcup _{k} K_{jk}$.
There exist closed sets $F_{jk} \subset \mathring{K_{jk}}$ such that $F_{jk}$ is diffeomorphic to $D$ and $\left| \int _{F_{jk}} g\omega ' \, \right| \geq (1 - \varepsilon _7) \left| \int _{K_{jk}} g\omega
' \, \right|$ holds.
Then, we have $\sum _k \int _{F_{1k}} g \omega ' \, \leq -(1 - 2 \varepsilon _7) E_i$ and $\sum _k \int _{F_{2k}} g \omega ' \, \geq (1 - 2 \varepsilon _7) E_i$.

Consider the deformation of $L'_i$ presented as follows.
First, we choose a pairwise disjoint paths $\gamma _{jk} \colon [0, 1] \to M$ such that $\gamma _{jk}((0, 1]) \subset M_j, \, \gamma _{jk}^{-1} (L'_i) = \{ 0 \} , \, \dot \gamma _{jk} (0) \notin T_{\gamma _{jk}(0)}L'_i , \, \gamma _{jk} ^{-1} (F_{jk}) = \{ 1 \}, \, \dot \gamma _{jk} (1) \notin T_{\gamma _{jk}(1)}(\partial F_{jk})$, and $\gamma _{jk} ^{-1} (F_{j'k'}) = \varnothing$ if $(j', k') \neq (j, k)$ as in upper part of Figure \ref{fig:BlowingBalloon}. 
Such paths do exist. The reason is as follows.First,  $M \setminus \left( \bigsqcup F_{jk} \right)$ is connected. When we have already found a part of $\gamma$'s, the complement of $F$'s and $\gamma$'s $M \setminus \left( \bigsqcup F_{jk} \cup \bigsqcup (\text{image of already found } \gamma \text{'s})\right)$ is again connected. Thus we can construct them inductively.
Now, we deform $L'_i$ like ``blowing balloons" along the $\gamma$'s and $F$'s as in Figure \ref{fig:BlowingBalloon}.

\begin{figure}[hbt]
\centering
\includegraphics[width=8cm]{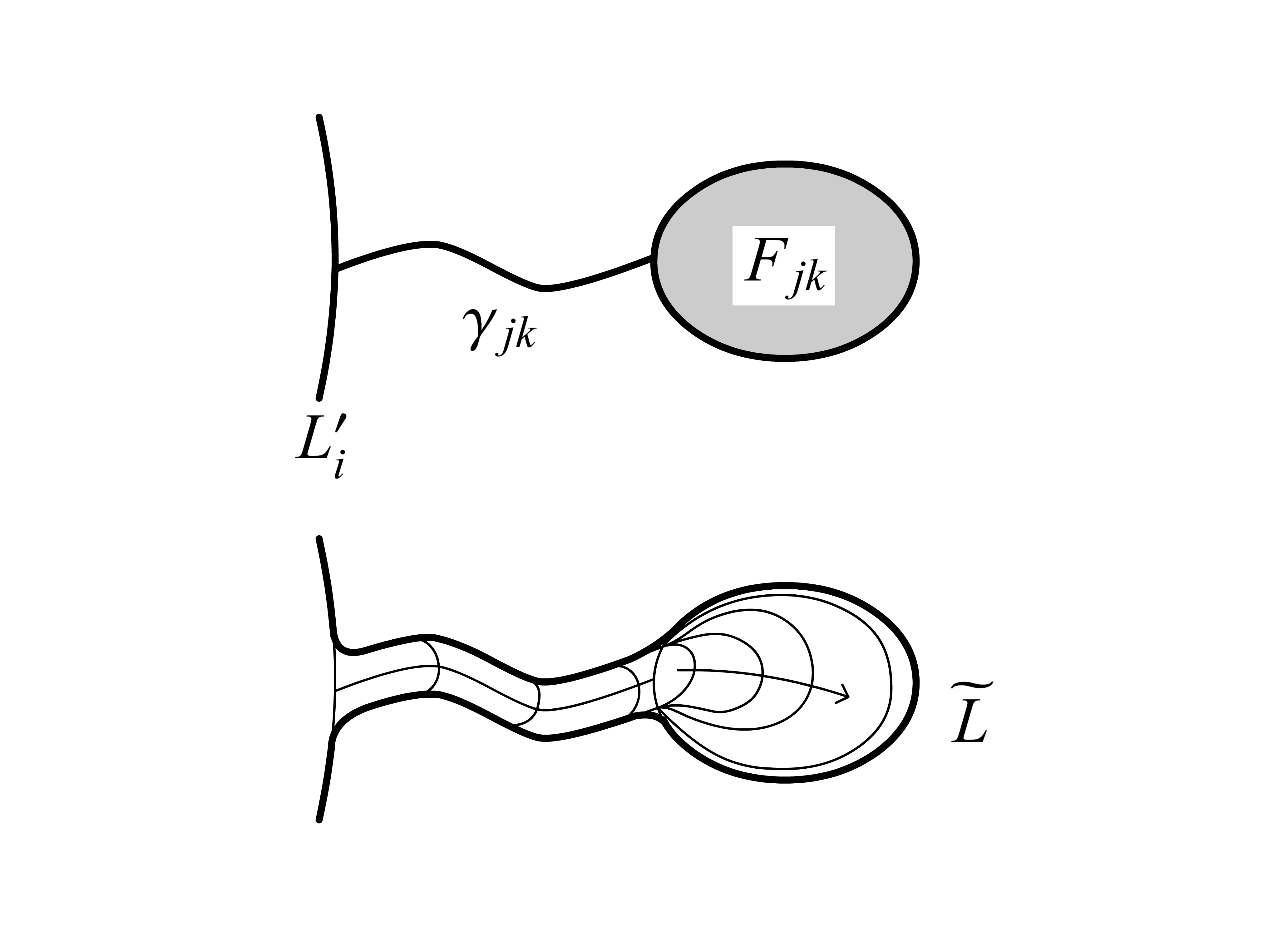}
\caption{}\label{fig:BlowingBalloon}
\end{figure}

In the process of the deformation, we impose a condition that the integrated value of $\theta '$ on deformed $L'_i$ must sustain zero.
If $\sum _k \int _{F_{1k}} \omega ' \, < \sum _k \int _{F_{2k}} \omega '$, then the balloons in $M_1$ become full before those in $M_2$ are not full yet. We stop the blowing at that time (we do the same when the inequality with the other direction holds).
We name the resulting curve $\widetilde L$. Let us estimate the value $\int _{\widetilde L} \theta$.
We define $D_{jk}$ by the region surrounded by $L'_i$ and $\widetilde L$ and intersects with $F_{jk}$.
Then we have 
\begin{align*}
\int _{\widetilde L} \theta
 &= \int _{L'_i} \theta + \sum _k \int _{D_{1k}} g\omega ' - \sum _k \int _{D_{2k}} g\omega ' \\
 &= E_i + \sum _k \left( \int _{F_{1k}} g\omega ' + \int _{D_{1k} \setminus F_{1k}} g\omega '\right) - \sum _k \left( \int _{F_{2k} \cap D_{2k}} g\omega ' + \int _{D_{2k} \setminus F_{2k}} g\omega ' \right) \\
 & \leq E_i - (1 - 2\varepsilon _7)E_i - \sum _k \int _{F_{2k} \cap D_{2k}} g\omega ' - \sum _{jk} (-1)^j \int _{D_{jk} \setminus F_{1k}} g\omega ' \\
 &\leq 2 \varepsilon _7 E_i - \varepsilon _6 \sum _k \int _{F_{2k} \cap D_{2k}} \omega ' - \sum _{jk} (-1)^j \int _{D_{jk} \setminus F_{1k}} g\omega '.
\end{align*}

We will show that the value of the last term of the above inequality can be negative.
If we make $D_{jk}$ around $\gamma _{jk}$    thin enough, then we have
$F_{2k} \cap D_{2k} \neq \varnothing$, so we assume this.
When the ratio $\varepsilon_7 / \varepsilon_6 \geq \left( \sum _k \int _{F_{2k} \cap D_{2k}} \omega ' \right) / 2E_i$, we choose $\varepsilon^{\text{new}}_6$ and $\varepsilon^{\text{new}}_7$ smaller than $\varepsilon_6$ and $\varepsilon_7$ respectively such that $\varepsilon^{\text{new}}_7 / \varepsilon^{\text{new}}_6 < \left( \sum _k \int _{F_{2k} \cap D_{2k}} \omega ' \right) / 2E_i$.
Since we change the value of $\varepsilon_6$ and $\varepsilon_7$, the subsets $F_{jk}$ and $D_{jk}$ must be changed into $F^{\text{new}}_{jk}$ and $D^{\text{new}}_{jk}$. We can impose that $\bigcup_{j,k} F_{jk} \subset \bigcup_{j,k'} F^{\text{new}}_{jk'}$ and $\bigcup_{j,k} D_{jk} \subset \bigcup_{j,k'} D^{\text{new}}_{jk'}$ by the very construction, so we assume these. Finally, we have 
\begin{align*}
\frac{\varepsilon^{\text{new}}_7}{\varepsilon^{\text{new}}_6} <  \frac{ \sum _k \int _{F_{2k} \cap D_{2k}} \omega '}{2E_i} \leq  \frac{ \sum _k \int _{F^{\text{new}}_{2k'} \cap D^{\text{new}}_{2k'}} \omega ' }{2E_i},
\end{align*}
hence we have $\displaystyle 2 \varepsilon^{\text{new}}_7 E_i - \varepsilon^{\text{new}}_6 \sum _k \int _{F^{\text{new}}_{2k} \cap D^{\text{new}}_{2k}} \omega ' < 0$.

From now, we write such  $\varepsilon^{\text{new}}_6, \varepsilon^{\text{new}}_7, F^{\text{new}}_{jk'}$, and $D^{\text{new}}_{jk'}$ by $\varepsilon_6, \varepsilon_7, F_{jk}$, and $D_{jk}$.
If we get $D_{jk}$ around $\gamma _{jk}$  thinner and thinner, the value $\left| \sum _{jk} \, (-1)^j \int _{D_{jk} \setminus F_{1k}} g\omega ' \right|$ is getting smaller and smaller,　 and the coefficient $\sum _k \int _{F_{2k} \cap D_{2k}} \omega '$ of $\varepsilon_6$ is getting bigger and bigger.
Hence we have $\int _{\widetilde L} \theta < 0$ when $D_{jk}$ around $\gamma _{jk}$ is thin enough.

When we use the ``blowing balloons" process with the above $\varepsilon_6, \varepsilon_7, F_{jk}$, and $D_{jk}$, there exists $L''_i$ in the middle of the process such that $\int _{L''_i} \theta = \int _{L''_i} \theta ' = 0$ from the intermediate value theorem.
Thus, by lemma \ref{lem:HamMove}, there exists $\phi '_i \in \text{Ham} (M, \partial M)$ such that $\phi '_i(L'_i ) = L''_i$.
If the transversality condition fails, we deform the shape of $D_{jk}$ and blowing balloon process to achieve it and do the same process.

In the case that there exists a constant $k$ such that $\omega ' = k \omega$ we choose $f$ from $\text{Diff}_0(M, \partial M) \setminus \text{Symp}_0(M, \partial M, \omega)$.
Then our task is to find $L''_i$ isotopic to $L'_i$ such that $\int _{L''_i} \theta ' = 0$ and $\int _{f(L''_i)} \theta = 0$.
This is equivalent to consider the case that $f = 1_M$ and $\omega, \theta$ is replaced by $f^* \omega, f^* \theta$.
Since $f$ does not preserves $\omega$, there does not exist $k>0$ such that $\omega '= k f^* \omega$.
This case is already proved.

\begin{rem}\label{rem:NeedPrevLemma}

Proposition \ref{prop:VCMove} cannot be proved by the same argument when we don't assume
the result in Lemma \ref{lem:valueOfThetaOnBoundary}.
For example, let $M = M'$ be $S^1 \times [0, 1]$, $\varphi$ and $r$ be the canonical coordinates for $S^1$ and $[0, 1]$ respectively. Set $\omega = \omega ' = d\varphi \wedge dr$, $\theta = -\left(r - \frac12 \right)d\varphi$,
$\theta ' = -\left(r - \frac13 \right)d\varphi$, $L = S^1 \times \left \{
\frac12 \right \} \subset M$, and $L' = S^1 \times \left \{ \frac13 \right \}
\subset M'$.
Then, $L$ divides $M$ into two components of the same area while $L'$ divides $M'$ into two component of the different areas. Since every Hamiltonian
diffeomorphism preserves the area, we have to pick
$f \in \text{Diff}_0 (M, \partial M)$ very specifically, while in the above proof,
$f$ is just used in order to break the relation $\omega' = k\omega$.

\end{rem}

\section{Examples and problems}\label{sec:ExAndProb}

\subsection{K-groups of the Fukaya-Seidel categories and the Milnor lattices}\label{subsec:FSCatAndMilnorLattice}

In this section, we present examples showing that the derived
Fukaya-Seidel categories have more information than the Milnor lattices.
Before we go to the calculation of the examples, we review fundamental
features of the Fukaya-Seidel categories.

First, we recall some definitions.
Let \(\pi\) be a Lefschetz fibration. 
We fix a regular fibre \(M\), and gather vanishing cycles to \(M\), namely
\(L_1, L_2, \dots , L_N\), where \(N = \# Crit(\pi)\).
Then, we define a free \(\mathbb{Z}\)-module by \(M_{\pi} := \mathbb{Z}L_1
\oplus \mathbb{Z}L_2 \oplus \cdots \oplus \mathbb{Z}L_N\) and consider a
pairing that is induced from the intersection pairing.
We call the pair, \(M_{\pi}\) and its pairing, the {\it Milnor lattice}.
This is the fundamental fact that the isomorphism class as free \(\mathbb{Z}\)-module
with the pairing of the Milnor lattice of a Lefschetz fibration $\pi$ is independent
of the choice of the additional geometric data, the choice of regular fibre, the connection of $\pi^{\text{reg}}$,  and the distinguished basis of vanishing paths.

In the case of PALFs with vanishing two-fold first Chern class, the Milnor lattice is reconstructed from the derived Fukaya-Seidel category $\mathcal{F}(\pi)^\to$ as follows. First, we prepare some definitions. For a triangulated category $\mathcal{T}$, we define its {\it K-group} $K(\mathcal{T})$
as quotient group $\bigoplus _{X \in Ob(\mathcal{T})} \mathbb{Z}X / \sim$.
Here $\sim$ is generated by $X -Y +\ Z ~\sim 0$ for exact triangle $X \to
Y \to Z$.
If the following hom space ${\rm Hom}_\mathcal{T}^*(X, Y) \coloneqq  \bigoplus _{i \in \mathbb{Z}} {\rm Hom} (X, T^i Y)$ is of finite dimension for any pair of objects, then we can define the {\it Euler pairing} on the K-group by
\[\bigg( [X], [Y] \bigg) := \sum_{d \in \mathbb{Z}} (-1)^d \left( \dim _{k}
{\rm Hom}(X, T^d Y) - \dim _{k} {\rm Hom}(T^d Y, X) \right),\]
for $[X], [Y] \in K(\mathcal{T})$.
The Euler pairing is well-defined, i.e. this is independent of the choice of the representative $X, Y$ in the RHS of the definition.

Finally, for an exact Lefschetz fibration with vanishing
two-fold first Chern class $\pi$, its Milnor lattice and the
K-group of derived Fukaya-Seidel category with the Euler pairing\(\) are naturally isomorphic
as free \(\mathbb{Z}\)-module with a pairing.

By this fact, one sometimes says that the notion of the Fukaya-Seidel categories is a categorification
of the Milnor lattices.
So we can expect that the Fukaya-Seidel categories have much information
than the Milnor lattices.
In the next subsection, we give examples which show that the Fukaya-Seidel categories do have much information than the Milnor lattices.

\subsection{Examples and problems}

In this subsection, we consider three Lefschetz fibrations $\pi _1,
\pi _2$, and $\pi _3$ with regular fibre \(\Sigma _{3, 1}\).
Those Lefschetz fibrations are defined by specifying their vanishing
cycles  \(L_1, L_2, \text{ and } L_3\) as in Figure \ref{fig:LFs}.

\begin{figure}[hbt]
\centering
\includegraphics[width=8cm]{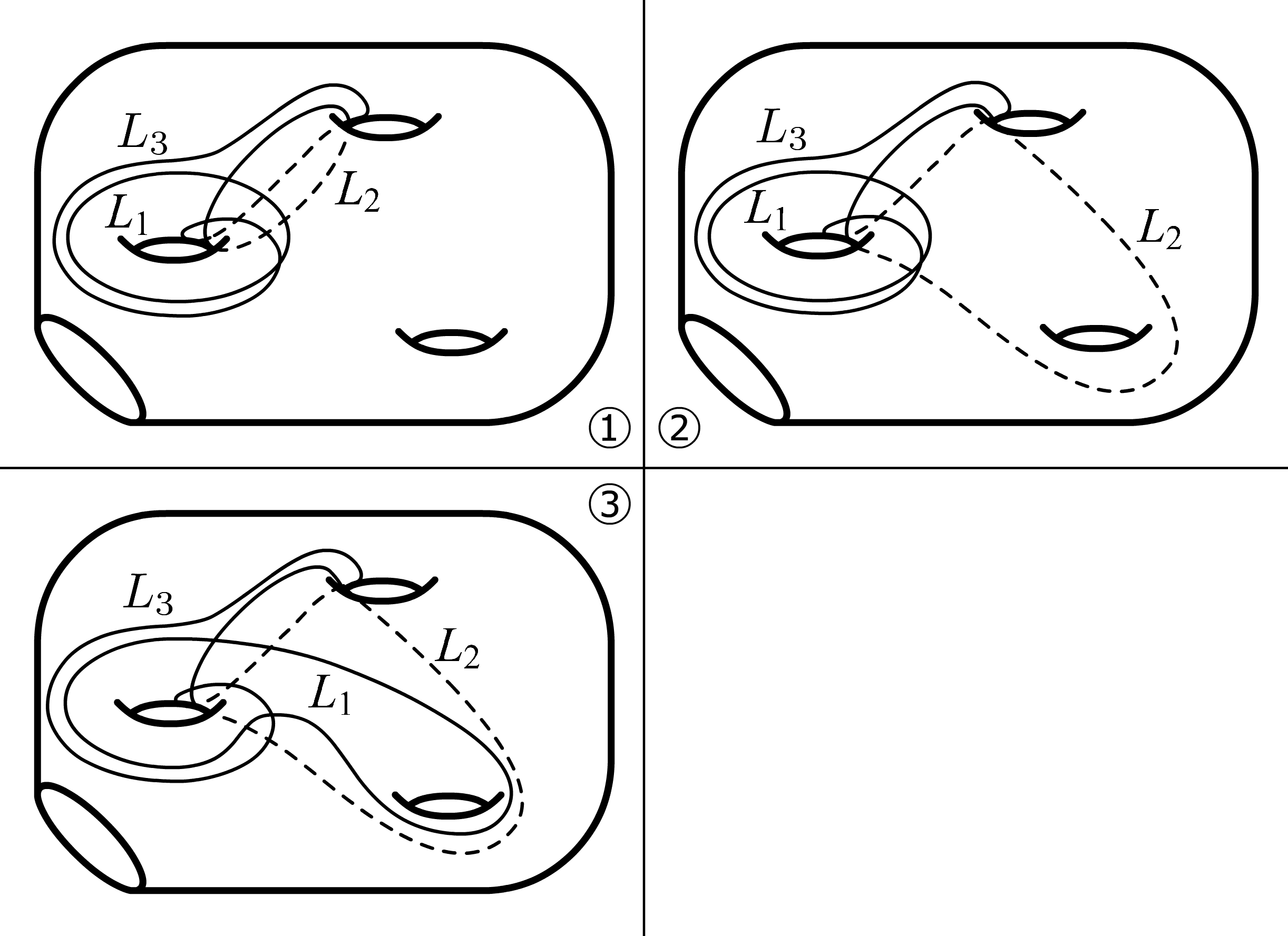}
\caption{Vanishing cycles of $\pi _i$}\label{fig:LFs}
\end{figure}

These three Lefschetz fibrations have isomorphic Milnor lattices, namely $M_{\pi_i} \cong \mathbb{Z}^3$, $([L_j],[L_j]) = 0$, and $([L_j], [L_k]) = \pm 1$ for $j \neq k$ (the sign depends on the orientations of the vanishing cycles which we have not defined yet).
The vanishing cycles of the first Lefschetz fibration $\pi_1$ enclose two triangles, one is (relatively) small and located on the ``front side" of the regular fibre $\Sigma_{3, 1}$, while the other may be hard to find, a grey shaded triangle in Figure \ref{fig:pi1}.
In the second Lefschetz fibration, a vanishing cycle $L_2$ is set to be different from that of $\pi_1$ so that the ``grey triangle" no longer appears, and in the third Lefschetz fibration, $L_1$ is also changed in order to break both of triangles. Hence, the $A_\infty$-structures of the Fukaya-Seidel categories of these three Lefschetz fibrations differ from each other, and in fact, their derived categories are also not equivalent.
This is what we will see in this and the next section.

\begin{lem}\label{lem:c1Vanish}

The above three PALFs $\pi _i \colon E_i \to D$ satisfy the condition of vanishing of the two-fold first Chern class, i.e. $2c_1(E_i) = 0$.

\end{lem}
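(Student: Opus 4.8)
The plan is to reduce the statement to a homological computation on the total space and then read the relevant homology off Figure~\ref{fig:LFs}. First I would recall that a PALF over $D$ with fibre $\Sigma_{3,1}$ and vanishing cycles $L_1,L_2,L_3$ has total space obtained from $\Sigma_{3,1}\times D$ by attaching one $2$-handle along each $L_j$ (sitting in a fibre over $\partial D$, with the Lefschetz framing). Consequently $E_i$ is homotopy equivalent to the two-dimensional CW complex $\Sigma_{3,1}\cup_{L_1}e^2\cup_{L_2}e^2\cup_{L_3}e^2$, whose cellular chain complex is $\mathbb{Z}^3 \xrightarrow{\partial_2}\mathbb{Z}^6 \xrightarrow{0}\mathbb{Z}$, where $\partial_2$ is the $6\times 3$ matrix whose columns are the classes $[L_1],[L_2],[L_3]\in H_1(\Sigma_{3,1};\mathbb{Z})\cong\mathbb{Z}^6$ and the last map vanishes because all $1$-cells are loops. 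Since the Milnor lattice has rank $3$ the three classes are linearly independent, so $\partial_2$ is injective and $H_2(E_i)=0$. By the universal coefficient theorem this gives $H^2(E_i;\mathbb{Z})\cong\mathrm{Ext}^1_{\mathbb{Z}}(H_1(E_i),\mathbb{Z})$, which is the (finite) torsion subgroup of $H_1(E_i)=\mathbb{Z}^6/\langle[L_1],[L_2],[L_3]\rangle$. In particular $c_1(E_i)\in H^2(E_i;\mathbb{Z})$ is automatically a torsion class.

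Next I would note that, because $H^2(E_i;\mathbb{Z})$ is already pure torsion, the equality $2c_1(E_i)=0$ follows as soon as this group has exponent dividing $2$; equivalently, as soon as $H_1(E_i;\mathbb{Z})$ carries no odd torsion. Thus the task becomes: bring the $3\times 6$ integer matrix of vanishing-cycle classes, read off Figure~\ref{fig:LFs}, into Smith normal form and check that every invariant factor is $1$ or $2$. The three fibrations share the same intersection pattern ($([L_j],[L_j])=0$, $([L_j],[L_k])=\pm1$ for $j\neq k$), and one checks from the figure that the modifications passing from $\pi_1$ to $\pi_2$ to $\pi_3$ may be taken to preserve the homology classes of the vanishing cycles, so a single Smith-form computation settles all three at once (in the worst case one simply repeats the short computation three times). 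When the three classes span a primitive rank-$3$ sublattice the cokernel is free, $H^2(E_i)=0$, and even $c_1(E_i)=0$; otherwise the only invariant factor exceeding $1$ is a $2$, so every element of $H^2(E_i)$ is killed by $2$ and in particular $2c_1(E_i)=0$.

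The main obstacle is precisely this explicit identification of the homology classes from the picture and the verification that no odd torsion is produced; note that once $H^2(E_i)$ is shown to be $2$-torsion there is no need to locate $c_1$ any more tightly, which is what keeps the bookkeeping tolerable. As an independent cross-check I would also compute $c_1(E_i)$ directly: over $\Sigma_{3,1}\times D$ one has $TE_i\cong pr^{*}T\Sigma_{3,1}\oplus\underline{\mathbb{C}}$ with $T\Sigma_{3,1}$ a trivial complex line bundle (fix the trivialization $X$ supplied by Lemma~\ref{lem:ExistGoodTriv}), so $c_1(E_i)$ is supported on the three handles, and its value on the $j$-th core disc is governed by the writhe $w(L_j)$ of the vanishing cycle relative to $X$ together with the fixed Lefschetz handle framing. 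Feeding these localized contributions into the cochain complex of the first paragraph recovers the same class in $H^2(E_i;\mathbb{Z})$ and confirms that it is annihilated by $2$.
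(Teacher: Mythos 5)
Your reduction to a homological computation works for $\pi_2$ and $\pi_3$ and matches the paper's argument there, but the step ``since the Milnor lattice has rank $3$ the three classes are linearly independent, so $\partial_2$ is injective and $H_2(E_i)=0$'' is a genuine error, and it breaks the proof exactly in the case $i=1$. The Milnor lattice is \emph{by definition} the free module $\mathbb{Z}L_1\oplus\mathbb{Z}L_2\oplus\mathbb{Z}L_3$ equipped with the intersection pairing, so its rank being $3$ carries no information about the linear independence of $[L_1],[L_2],[L_3]$ in $H_1(\Sigma_{3,1};\mathbb{Z})$. For $\pi_1$ these classes are in fact dependent: $L_3$ is isotopic to $\tau_{L_1}(L_2)$, so $[L_3]=[L_2]\pm[L_1]$, the image of $\partial_2$ has rank $2$, $H_2(E_1)\cong\mathbb{Z}$, and $H_1(E_1)\cong\mathbb{Z}^4$ (the paper records exactly these values later on). Hence $H^2(E_1;\mathbb{Z})$ contains a free summand $\mathrm{Hom}(H_2(E_1),\mathbb{Z})\cong\mathbb{Z}$ and is not pure torsion, so ``$2c_1$ is automatically zero'' does not follow; you would actually have to show that $c_1(E_1)$ pairs to zero against the generator of $H_2(E_1)$. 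The same confusion undermines your claim that a single Smith-form computation settles all three fibrations at once: the paper computes $H_1(E_1)\cong\mathbb{Z}^4$, $H_1(E_2)\cong\mathbb{Z}^3$ and $H_1(E_3)\cong\mathbb{Z}^3\oplus\mathbb{Z}/2\mathbb{Z}$, so the three matrices of vanishing-cycle classes are pairwise inequivalent and the modifications between the fibrations certainly do not preserve the homology classes.

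The paper closes the $\pi_1$ gap by a geometric argument, which is essentially the idea you relegate to an optional ``cross-check'': identify $\Sigma_{3,1}$ with a plumbing of six annuli so that $L_1$ and $L_2$ become core circles, choose the trivialization $X$ of $TM$ with $w(L_1)=w(L_2)=0$, use the fact that Dehn twists preserve unobstructed exact Lagrangians to conclude $w(L_3)=0$, and then invoke Seidel's construction of a relative quadratic volume form to trivialize $\wedge^{\mathrm{top}}(TE_1)^{\otimes(-2)}$ directly. For $\pi_1$ this is not a confirmation of an already-established fact but the only part of the argument that controls the non-torsion component of $c_1(E_1)$; to complete your proof you would need to carry out that computation (in particular, to verify the writhe of $L_3$) rather than assume $H^2(E_1;\mathbb{Z})$ is finite.
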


\begin{proof}

It is known that the total space of a Lefschetz fibration with $N$ critical
points are homotopy equivalent to a topological space which is obtained by attaching
$N$ discs to its regular fibre along its vanishing cycles \cite{Kas80}.
By using Mayer-Vietris' exact sequence, we can compute their homology groups:
$H_1(E_2; \mathbb{Z}) \cong \mathbb{Z}^3,
H_1(E_3; \mathbb{Z}) \cong \mathbb{Z}^3 \oplus \mathbb{Z} / 2\mathbb{Z}$, and $H_2(E_2; \mathbb{Z}) = H_2(E_3; \mathbb{Z}) = 0$.
Hence, we can conclude $H^2(E_2; \mathbb{Z}) = 0$ and $H^2(E_3; \mathbb{Z}) = 0 \text{ or } \mathbb{Z}/2$, thus $2c_1(E_1) = 0$, and $2c_1(E_3) = 0$.

In the case $i = 1$, two homology classes $[L_1], [L_2]$ are linearly independent and $L_3 \approx \tau _{L_1} (L_2)$.
Now, $M \cong \Sigma _{3, 1}$ is deffeomorphic to a plumbing of six $S^1 \times [-1, 1]$ and we can find a diffeomorphism that sends $L_1$, $L_2$ to two (distinct) $S^1 \times \{ 0 \} \subset S^1 \times [-1, 1]$ of six.
Hence we can find a trivialization $X$ such that $w(L_1) = w(L_2) = 0$.
Since Dehn twist preserves unobstructed exact Lagrangian submanifold \cite{Se00}, we have $w(L_3) = 0$.
Now, we can conclude that $2c_1(E_1) = 0$ as follows.
Our trivialization naturally gives a non-vanishing section $\eta _M^2$ of $(T^*M)^{\otimes 2}$ (where the tensor product is taken in the complex sense).
By the discussion in (16f) ``Grading issues" in \cite{Se08}, $E_1$ admits a relative quadraic complex volume form $\eta _{E_1/D}^2$, which is a non-vanishing section of complex line bundle $\mathcal{K}_{E_1/D}^2 \coloneqq (\pi _1)^* (TD)^{\otimes 2} \otimes \wedge ^{top} (TE_1)^{\otimes (-2)}$ (this complex line bundle is appeared in (15c) ``Relative quadratic volume form" of \cite{Se08}).
Since the complex line bundle $TD$ is trivial, we can conclude $\wedge ^{top} (TE_1)^{\otimes (-2)}$ is trivial. Hence we have $2c_1(E_1) = 0$.
\hfill $\Box$

\end{proof}

From the above lemma, we can define the Fukaya-Seidel category of $\pi_i$.
These three PALFs can be distinguished by Hochschild cohomology groups as follows.

\begin{thm}\label{thm:ComputationalEx}

The above three Lefschetz fibrations can be distinguished in terms of Fukaya-Seidel categories. Namely, their Hochschild cohomology groups are as follows: 

\begin{align*}
HH^0(\mathcal{F}(\pi _1)^{\to}) &\cong k, &
HH^1(\mathcal{F}(\pi _1)^{\to}) &\cong k, \\
HH^0(\mathcal{F}(\pi _2)^{\to}) &\cong k, &
HH^1(\mathcal{F}(\pi _2)^{\to}) &\cong 0, \\
HH^0(\mathcal{F}(\pi _3)^{\to}) &\cong k^2, &
HH^1(\mathcal{F}(\pi _3)^{\to}) &\cong k.
\end{align*}

\end{thm}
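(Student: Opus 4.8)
The plan is to compute each Fukaya--Seidel category $\mathcal{F}(\pi_i)^{\to}$ explicitly from the combinatorial recipe of Section~\ref{sec:FSCat} and then to feed the resulting three $A_\infty$-structures into the Hochschild complex and read off $HH^0$ and $HH^1$. By Lemma~\ref{lem:c1Vanish} the two-fold first Chern class vanishes, so all three categories are well defined; by directedness each has only the objects $L_1^{\#}, L_2^{\#}, L_3^{\#}$, with $hom(L_i^{\#},L_j^{\#})=0$ for $i>j$ and $k\cdot e_i$ for $i=j$. Hence all the content lives in the three complexes $hom(L_1^{\#},L_2^{\#})$, $hom(L_2^{\#},L_3^{\#})$, $hom(L_1^{\#},L_3^{\#})$ and in the products among them, and the computation is finite.

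First I would read off from Figure~\ref{fig:LFs}, for each fibration and each pair $i<j$, the set $L_i\cap L_j$ of transverse intersection points; these give bases of the Floer complexes $CF(L_i^{\#},L_j^{\#})$. After fixing the trivialization $X$ of Lemma~\ref{lem:ExistGoodTriv} together with brane structures $\alpha_i$ and switching points $p_i$, I would compute the index $i(p)$ of every generator and then the differential $\mu^1$ by enumerating the bigons of Definition~\ref{df:Mu1Pr} with the signs of Section~\ref{subsubsec:mu-s}; passing to cohomology yields the morphism spaces of $H(\mathcal{F}(\pi_i)^{\to})$. The decisive step is the triangle product $\mu^2\colon hom(L_2^{\#},L_3^{\#})\otimes hom(L_1^{\#},L_2^{\#})\to hom(L_1^{\#},L_3^{\#})$, computed by the signed count of immersed triangles of Definition~\ref{df:MuD}, since this is exactly where the three fibrations part ways: the vanishing cycles of $\pi_1$ bound two triangles, those of $\pi_2$ bound one, and those of $\pi_3$ bound none. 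I would also check the higher products $\mu^d$ for $d\ge 3$, as a non-formal contribution is precisely the kind of data that can separate $\pi_1$ from the rest. Already at this stage I expect, for $\pi_2$, the single triangle to identify $\mu^2$ with a generator of $hom(L_1^{\#},L_3^{\#})$, producing a formal hereditary structure whose Hochschild cohomology is concentrated in degree $0$.

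With the three $A_\infty$-structures in hand, I would write down the Hochschild cochain complex (recalled in Section~\ref{sec:PrfOfCompEx}) and compute its degree-$0$ and degree-$1$ cohomology directly. Because each category is directed with small, explicitly known morphism spaces, the relevant cochain groups are finite-dimensional and the Hochschild differential is evaluable by hand; keeping track of the internal $\mathbb{Z}$-gradings of the generators is essential here, since $HH^0$ and $HH^1$ collect contributions from Hochschild cochains of several lengths, and the naive center of the cohomology algebra is \emph{not} the answer once the generators carry nonzero Floer degree. I would then verify that the resulting pairs $(HH^0,HH^1)$ equal $(k,k)$, $(k,0)$, and $(k^2,k)$, which are pairwise distinct and hence distinguish $\pi_1,\pi_2,\pi_3$. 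Since Hochschild cohomology is invariant under quasi-isomorphism of $A_\infty$-categories (cf.\ \cite{Sh15} and the discussion preceding Corollary~\ref{cor:InvOfHochschild}), this computation may be carried out on any convenient minimal model without affecting the outcome.

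The hardest part will be the sign-careful enumeration of polygons and the guarantee that no higher product $\mu^d$ is overlooked. In particular, the difference in $HH^1$ between $\pi_1$ and $\pi_2$, and the difference in $HH^0$ between $\pi_2$ and $\pi_3$, are sensitive to whether a triangle contributes with a $+$ or $-$ sign (through the switching points $p_i$ and the brane orientations), to the exact coefficient with which $\mu^2$ hits $hom(L_1^{\#},L_3^{\#})$, and to the precise internal degrees $i(p)$ of the intersection points, which can even force $\mu^2$ to vanish for degree reasons. A robust way to control all of this is to place each configuration in a standard local model where $X$ is a constant vector field, as in Remark~\ref{rem:DegOfMu}, compute indices and signs there, and then invoke robustness of the indices under the homotopy back to the actual configuration; the remaining Hochschild bookkeeping is then a finite linear-algebra computation.
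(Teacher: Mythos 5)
Your overall strategy coincides with the paper's: compute each $\mathcal{F}(\pi_i)^{\to}$ explicitly as a finite directed $A_\infty$-category from the combinatorial polygon counts, then evaluate the Hochschild complex in degrees $0$ and $1$ by hand. The difficulty is that, as written, the proposal defers exactly the two computations whose outcomes \emph{are} the theorem, and in both cases the answer is not automatic. First, for $\pi_1$ you must actually show that the two triangles contribute with opposite signs, so that $\mu^2(p_{23},p_{12})=0$. If they contributed with equal signs you would instead get $\pm 2\,p_{13}$, which over a field of characteristic $\neq 2$ is isomorphic (after rescaling the generator) to the category computed for $\pi_2$, and the claimed distinction $HH^1(\mathcal{F}(\pi_1)^{\to})\cong k$ versus $HH^1(\mathcal{F}(\pi_2)^{\to})=0$ would disappear. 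The paper's argument is short but essential: $\partial u_1\cup\partial u_2=L_1\cup L_2\cup L_3$ and $\partial u_1\cap\partial u_2=\{p_{12},p_{13},p_{23}\}$, so the three switching points $p_1,p_2,p_3$ are distributed between the two triangle boundaries with odd total count, forcing $s(u_1)\not\equiv s(u_2)\bmod 2$ regardless of where the switching points are placed. You flag this sign question as ``sensitive'' but do not resolve it.

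Second, for $\pi_3$ the absence of triangles by itself would produce a category with the \emph{same} Hochschild cohomology as that of $\pi_1$, namely $(k,k)$ rather than the asserted $(k^2,k)$. What actually yields $HH^0(\mathcal{F}(\pi_3)^{\to})\cong k^2$ is that the generator of $hom(L_1^{\#},L_3^{\#})$ lies in internal degree $-2$; the paper establishes this via the writhe/index formula of Lemma \ref{lem:IndexFormula} combined with the constraints $w(L_i)=0$ coming from unobstructedness, and this degree shift is what moves the length-three Hochschild cochain supported on $(1,2,3)$ into $CC^0$ and contributes the extra class. You correctly observe that the internal degrees matter, but you never compute $i(p_{13})$, and without that value the stated answer for $\pi_3$ does not follow. (Two of your side concerns are easily dispatched: in a directed category with three objects all $\mu^d$ with $d\geq 3$ on pairwise distinct increasing objects vanish for lack of a fourth object, and for $\pi_1,\pi_2$ all generators sit in degree $0$ while $\deg\mu^d=2-d$, so only $\mu^2$ can be nonzero; and your expectation that $HH^*(\mathcal{F}(\pi_2)^{\to})$ is concentrated in degree $0$ is consistent with the paper's computation.)
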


The proof of this theorem is presented in the next section.

The Milnor lattices of the above three Lefschetz fibrations all agree, so
this is an example that the Fukaya-Seidel categories do have more information
than the Milnor lattices.
But in fact, the total space of $\pi _1, \pi _2, \pi_3$ are not homeomorphic each other, so there leave a lot to be desired. One can prove this by computing their first homology groups $H_1(E_1; \mathbb{Z}) \cong \mathbb{Z}^4, H_1(E_2; \mathbb{Z}) \cong \mathbb{Z}^3$, and $H_1(E_3; \mathbb{Z}) \cong \mathbb{Z}^3 \oplus \mathbb{Z} / 2\mathbb{Z}$ by the Mayer-Vietris' exact sequence.
So, there emerges a natural question:

\begin{prob}\label{prob:FSCatHaveMoreInfo}

Is there two PALFs \(\pi _1, \pi _2\) with vanishing of the two-fold first Chern class such that the total
spaces are homeomorphic (or diffeomorphic), the Milnor lattices are isomorphic,
but their category of twisted complexes of Fukaya-Seidel categories are not equivalent?

\end{prob}

The category of twisted complexes has more
information than the derived category of a given $A_{\infty}$-category.
Namely, Kajiura \cite{Kaj13} proposed two $A_{\infty}$-categories $\mathcal{C}_0$ and
$\mathcal{C}_1$ in such that $D \mathcal{C}_0 \cong D \mathcal{C}_1$
but $Tw \mathcal{C}_0 \ncong Tw \mathcal{C}_1$. So there emerges another question:

\begin{prob}\label{prob:GeometricExOfKaj13}

Is there a geometric example of this? This asks that whether there exist two Lefschetz fibrations \(\pi _1, \pi _2\) such that $D \mathcal{F}(\pi_1
)^{\to} \cong D \mathcal{F}(\pi_2 )^{\to}$
but $Tw \mathcal{F}(\pi_1 )^{\to} \ncong Tw \mathcal{F}(\pi_2 )^{\to}$.

\end{prob}

\section{Proof of theorem \ref{thm:ComputationalEx}}\label{sec:PrfOfCompEx}

\subsection{Computation of the derived Fukaya-Seidel categories}

Let us calculate the derived Fukaya-Seidel categories for $\pi _1, \pi _2$, and $\pi _3$ in section \ref{sec:ExAndProb}.
Now, we can choose $L_1, L_2$, and $L_3$ in Figure \ref{fig:LFs} as vanishing
cycles which means that there exists $\theta$ such that $\int_{L_i} \theta = 0$.
For $\pi_2$ and $\pi_3$, we can find such $\theta$ by adding (representatives of) some elements of $H^1_{\text{dR}}(E_i; \mathbb{R})$.
In the case of $\pi_1$, we should use a symplectic form $\omega$ such that two triangles have the same area and its primitive  $\theta$ satisfying $\int_{L_1} \theta =\ \int _{L_2} \theta = 0$.  Then, $\int_{L_3} \theta = 0$ automatically holds. 

We name the distinguished basis of vanishing cycles of $\pi _i$ by $\boldsymbol{L}_i = (L_1, L_2, L_3)$.
Here, $L_1$ out of $\boldsymbol{L}_1$ is the vanishing cycle of $\pi _1$ and $L_1$ out of $\boldsymbol{L}_2$ is the vanishing cycle of $\pi _2$ etc. We use the same symbol $L_1$ for the first vanishing cycle of $\pi _1, \pi_2$, and $\pi_3$.

We write the Lagrangian branes corresponding to $L_1, L_2$, and $L_3$ by $L_1^{\#} = (L_1, \alpha _1, p_1), L_2^{\#}$, and $L_3^{\#}$ and name the intersection points by $\{ p_{ij} \} = L_i \cap L_j$ for $i < j$.
Now we will compute their Fukaya-Seidel categories defined via the above Lagrangian branes $\mathcal{F}(\boldsymbol{L}_i^{\#})^{\to}$ one by one.

\subsubsection{Computation of $\mathcal{F}(\boldsymbol{L}_1^{\#})^{\to}$}\label{subsubsec:CompOfFL1}

\begin{figure}[hbt]
\centering
\includegraphics[width=5cm]{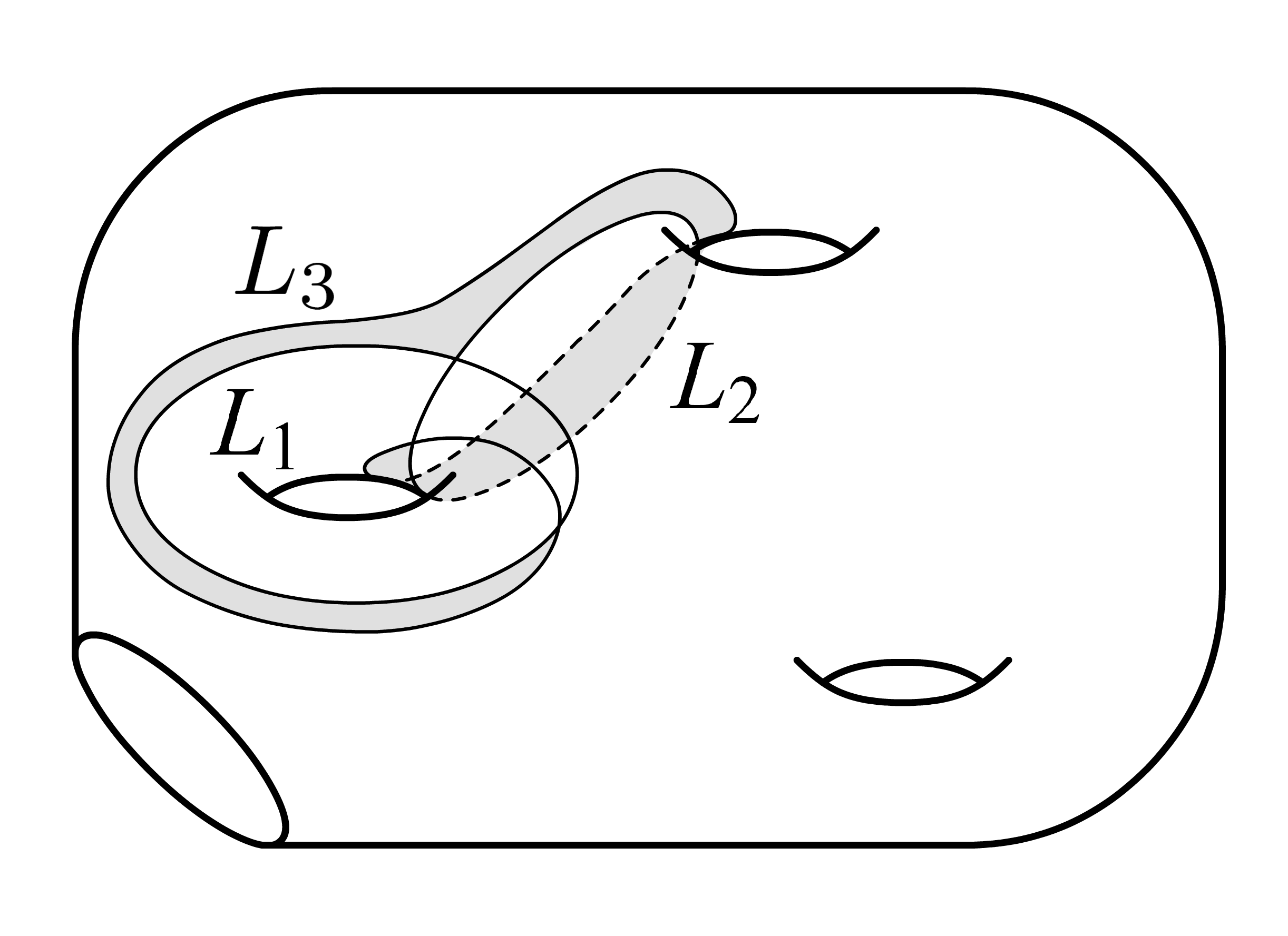}
\caption{Vanishing cycles of $\pi _1$}\label{fig:pi1}
\end{figure}

By Figure \ref{fig:pi1}, the dimension of the hom spaces $hom_{\mathcal{F}(\boldsymbol{L}_1)^{\to}} (L_1^{\#}, L_2^{\#})$,
$hom_{\mathcal{F}(\boldsymbol{L}_1)^{\to}} (L_1^{\#}, L_3^{\#})$, and 
$hom_{\mathcal{F}(\boldsymbol{L}_1)^{\to}} (L_2^{\#}, L_3^{\#})$ are all one. 
We can assume that $hom_{\mathcal{F}(\boldsymbol{L}_1)^{\to}}^0(L_1^{\#}, L_2^{\#})$ and $hom_{\mathcal{F}(\boldsymbol{L}_1)^{\to}}^0(L_2^{\#}, L_3^{\#})$ are non-trivial by shifting the gradings of Lagrangian branes.
Since we have  $\mathcal{M}^2 (p_{23}, p_{12}; p_{13}) \neq \varnothing$, we can conclude that $p_{13} \in hom_{\mathcal{F}(\boldsymbol{L}_1)^{\to}}^0(L_1^{\#},
L_3^{\#})$ by the same discussion in Remark \ref{rem:DegOfMu}.

Next, we compute the $\mu$'s. Since all the degrees of morphisms are zero, we have $\mu ^d = 0$ for $d \neq 2$. Thus the only nontrivial term we have to compute is $\mu ^2(p_{23}, p_{12})$.
In the case of $\pi _1$, our moduli space $\mathcal{M}^2(p_{23}, p_{12})$ has two elements, namely $u_1$ and $u_2$ (one is small and another is a big gray one in Figure \ref{fig:pi1}).
Let us consider the sign $(-1)^{s(u_i)}$.
All the morphisms have degree zero, hence the vertices do not contribute
to the sign. Since $\partial u_1 \cup \partial u_2 = L_1 \cup L_2 \cup L_3$ and $\partial u_1 \cap \partial u_2 = \{ p_{12}, p_{13}, p_{23} \}$, $\partial u_1$ and $\partial u_2$ shares three switching points $p_1, p_2, p_3$. 
Since three is odd, the signs of $u_1$ and $u_2$ are different. Thus we have $\mu ^2(p_{23}, p_{12}) = 0$.

Let us sum up the result in this subsubsection.
The Fukaya-Seidel category $\mathcal{F} (\pi_1)^{\to} \coloneqq \mathcal{F} (\boldsymbol{L}_1^{\#})^{\to}$  is isomorphic to the  $A_{\infty}$-category $\mathcal{A}_1$ defined as follows: ${\rm Ob}(\mathcal{A}_1) = \{ 1, 2, 3 \}$; the hom spaces are all zero but $hom_{\mathcal{A}_1}^0
(j, j) = k \cdot e_j$ for $j = 1, 2, 3$, and $hom_{\mathcal{A}_1}^0(j,
k) = k \cdot e_{jk}$ for $j < k$; and the higher composition maps are given by
$\mu _{\mathcal{A}_1}^d = 0$ for $d \neq 2$, $\mu _{\mathcal{A}_1}^2(e_k,
e_{jk}) = e_{jk}, \, \mu _{\mathcal{A}_1}^2(e_{jk}, e_j) = e_{jk}$, and 
$\mu _{\mathcal{A}_1}^2(e_{23}, e_{12}) = 0$.

\subsubsection{Computation of $\mathcal{F}(\boldsymbol{L}_2^{\#})^{\to}$}\label{subsubsec:CompOfFL2}

\begin{figure}[hbt]
\centering
\includegraphics[width=5cm]{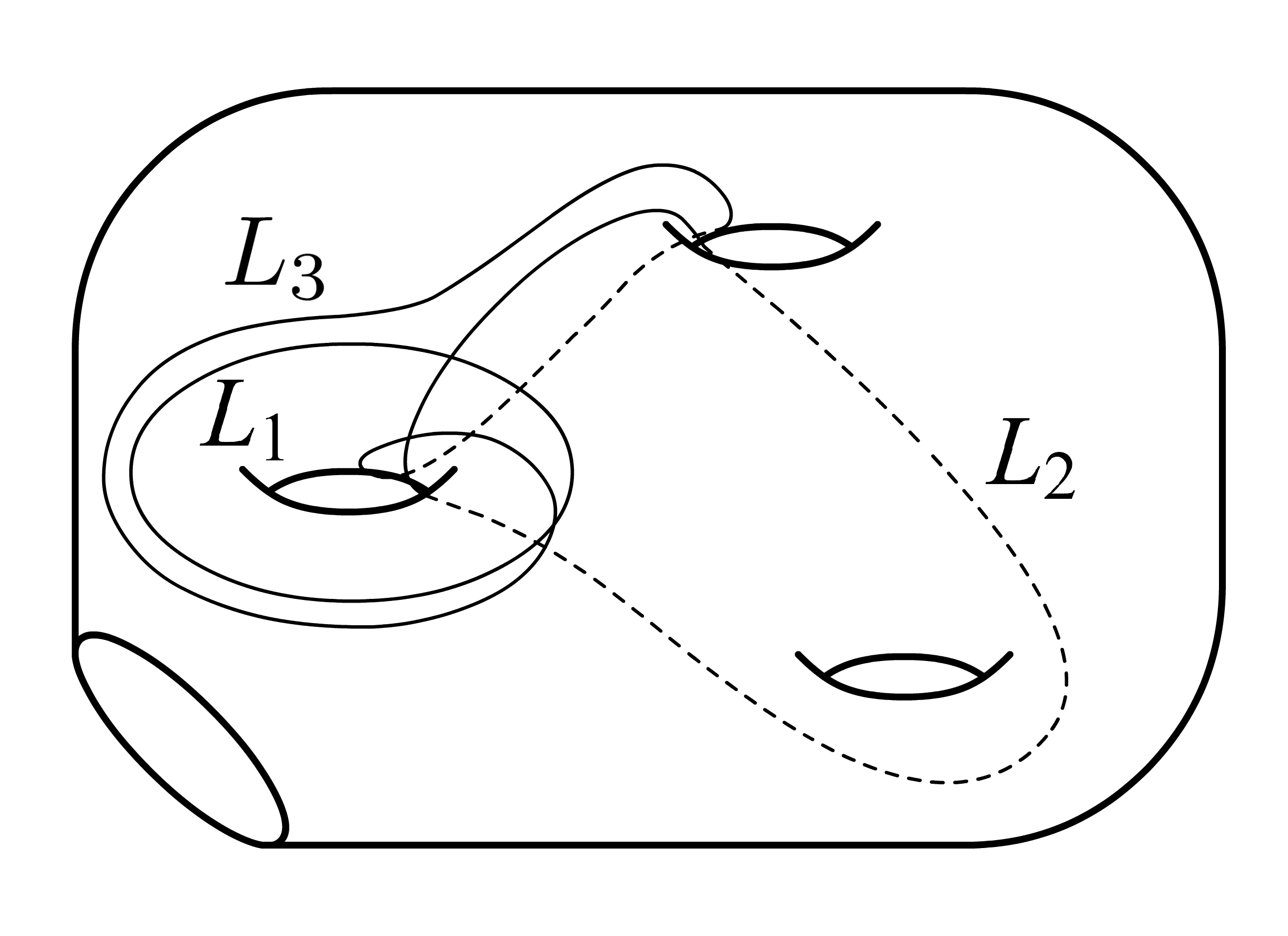}
\caption{Vanishing cycles of $\pi _2$}\label{fig:pi2}
\end{figure}

The situation of $\pi _2$ is all the same as $\pi _1$ but $\mathcal{M}^2(p_{23}, p_{12}; p_{13})$ has only single element $u$.
Hence, the only difference is the absence of cancellation in the computation of $\mu ^2$, so we have $\mu ^2(p_{23}, p_{12}) = \pm p_{13}$. If necessary, we change the switching point $p_i$ and have $\mu ^2(p_{23}, p_{12}) = +p_{13}$.

To sum up, we define the $A_{\infty}$-category $\mathcal{A}_2$ same as $\mathcal{A}_1$ but $\mu _{\mathcal{A}_2}^2(e_{23}, e_{12}) = e_{13}$, then we have $\mathcal{F}
(\pi_2)^{\to} \coloneqq \mathcal{F} (\boldsymbol{L}_2^{\#})^{\to} \cong \mathcal{A}_2$.

\subsubsection{Computation of $\mathcal{F}(\boldsymbol{L}_3^{\#})^{\to}$}\label{subsubsec:CompOfFL3}

\begin{figure}[hbt]
\centering
\includegraphics[width=5cm]{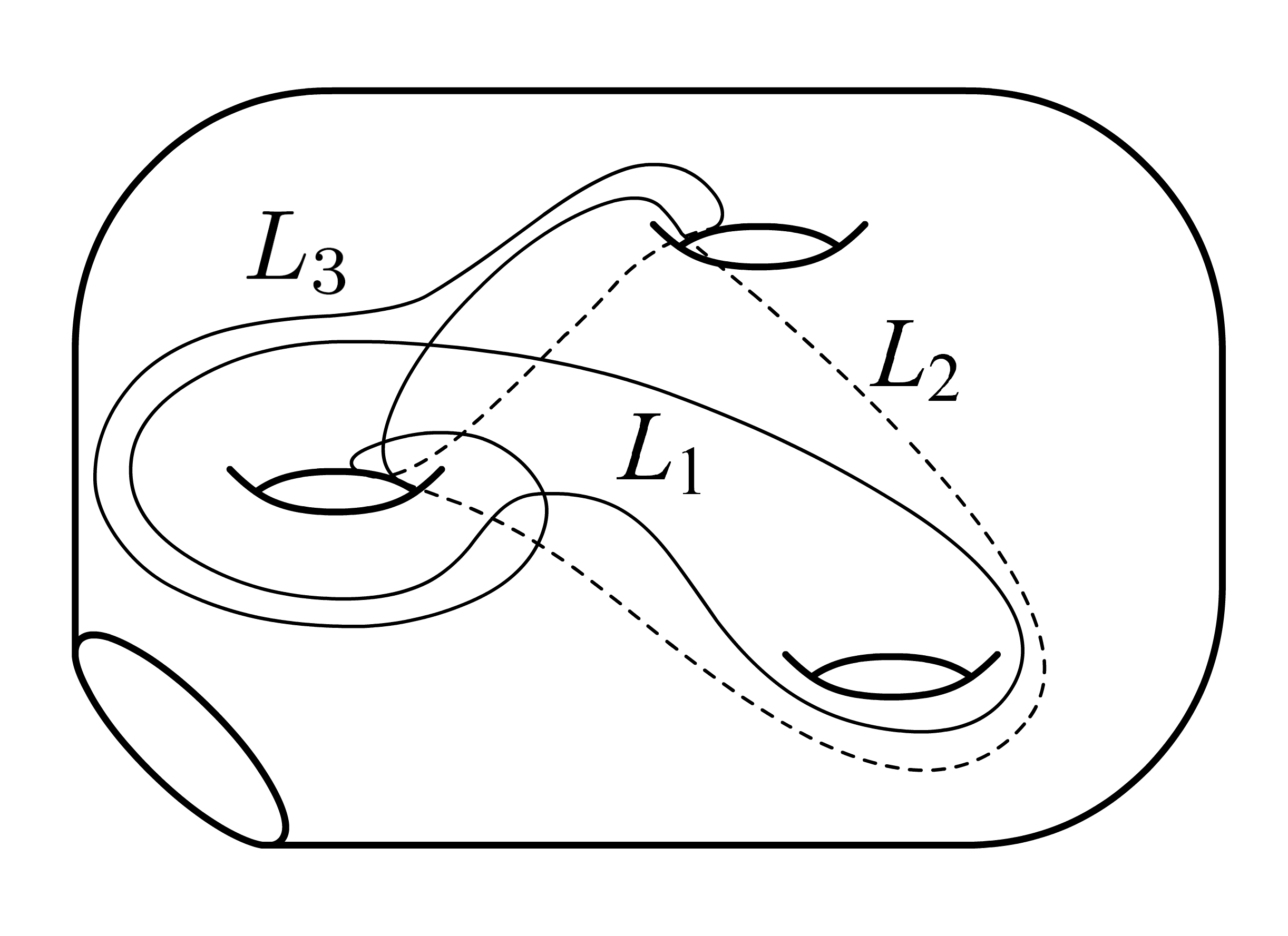}
\caption{Vanishing cycles of $\pi _3$}\label{fig:pi3}
\end{figure}

In this case, we have $\mathcal{M}^2(p_{23}, p_{12}) = \varnothing$, so $\mu$'s are all zero except for $\mu ^2$ with $e_i = 1_{L_i}$.
Because of the absence of elements in the moduli space, we can not conclude that $p_{13} \in hom_{\mathcal{F} (\boldsymbol{L}_3^{\#})^{\to}} ^0 (L_1^{\#}, L_3^{\#})$, and in fact this is not true. 
So, what we have to compute is the degree $|p_{13}|$. Let us fix a trivialization $X$ as follows.
To specify the trivialization up to homotopy, it is enough to fix the writhe of six $S^1$'s as in Figure \ref{fig:Sj}
since $\bigcup S_j$ is homotopy equivalent to $M \cong \Sigma _{3, 1}$.
Set $w_j \coloneqq w(S_j)$, then we have $w(L_1) = w_3 + w_5 -2$, $w(L_2) = w_2 + w_5 + 2$, and $w(L_3) = w_2 + w_3$ for some orientations of $L_i$. 

\begin{figure}[hbt]
\centering
\includegraphics[width=5cm]{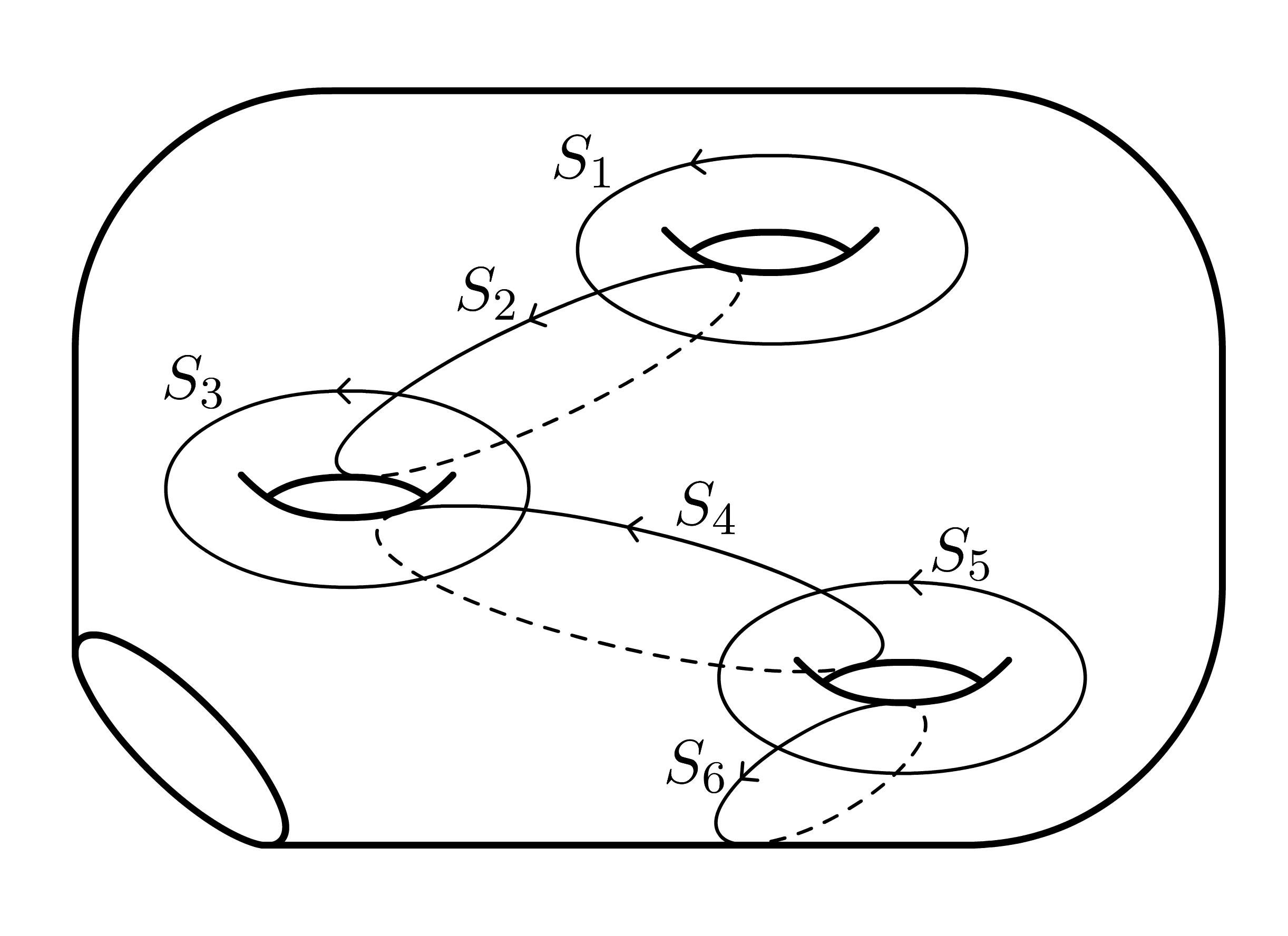}
\caption{$S_j$}\label{fig:Sj}
\end{figure}

The $\pm 2$ comes from the connection of two loops, for example, the process in Figure \ref{fig:concatenateS1} adds writhe by $-2$.

\begin{figure}[hbt]
\centering
\includegraphics[width=5cm]{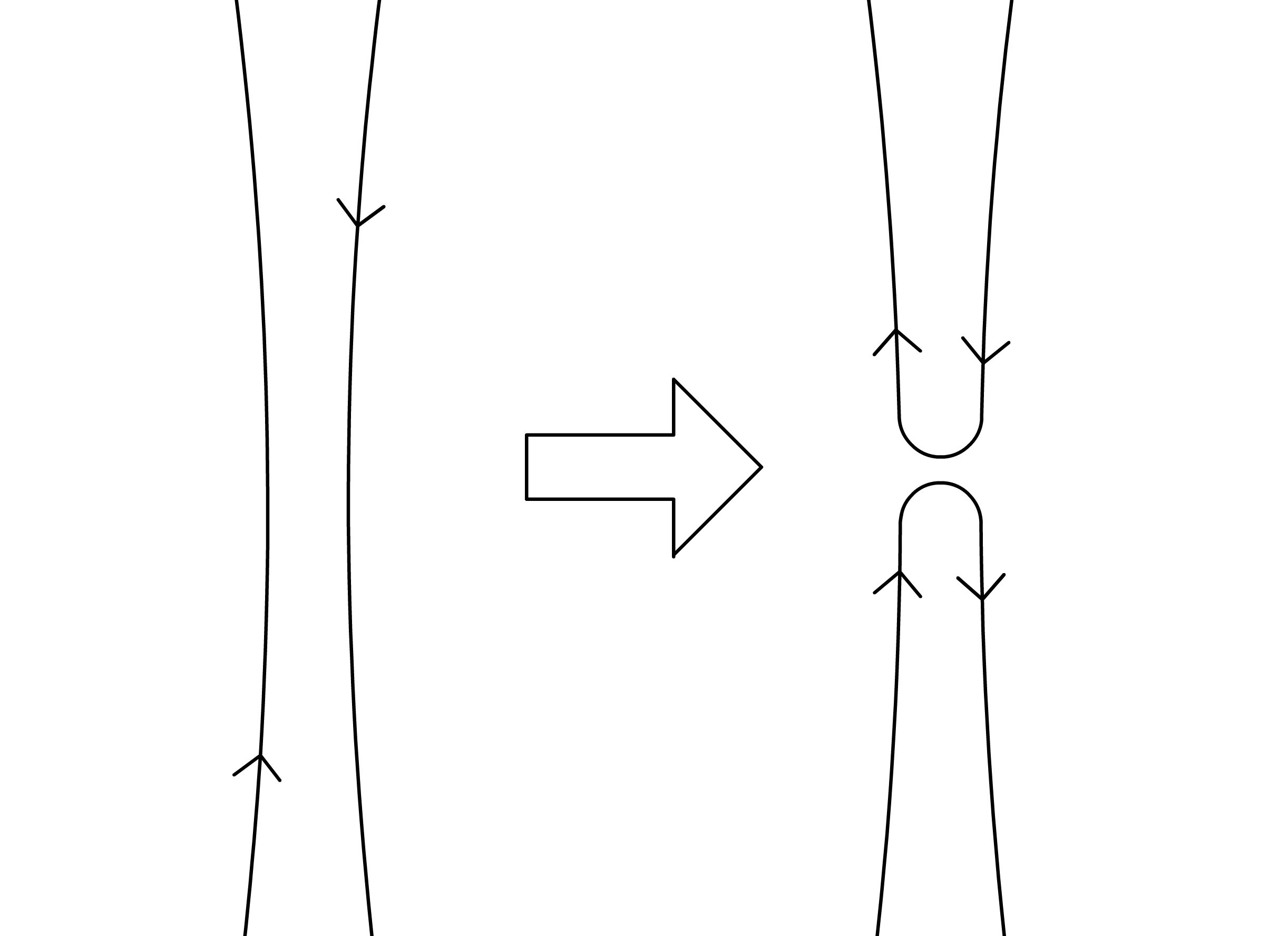}
\caption{Concatenating two $S^1$'s}\label{fig:concatenateS1}
\end{figure}

Now, since all the $L_i$'s are unobstructed,  the writhes must be zero, so we can conclude that $w_2 = -2, w_3 = 2$, and $w_5 = 0$.
Consider a (piecewise smooth) circle $C$ free homotopic to $S_5$ which starts from $p_{12}$, go along $L_2$ to $p_{23}$,  go along $L_3$ to $p_{13}$, and go back to $p_{12}$ along $L_1$.

\begin{lem}\label{lem:IndexFormula}

Let $M$ be an exact Riemann surface, $X$ be its trivialization, and $L^{\#}_0, L^{\#}_1, \dots , L^{\#}_n$ be unobstructed exact Lagrangian submanifolds.
We choose intersection points $y_0 \in L_0 \cap L_n$, $y_i \in L_{i-1} \cap L_i$ for $1 \leq i \leq n$. We set a piecewise smooth circle $C$ go along $L_0$ from $y_0$ to $y_1$, turn left and go along $L_1$ from $y_1$ to $y_2$, turn left ... and finally comes back to $y_0$.
Then, we have the following formula: $i(y_0) = i(y_1) + i(y_2) + \cdots + i(y_n) + (w(\widetilde{C}) - n)$. Here $i(y_0)$ is the degree of $y_0$ as a morphism from $L^{\#}_0$ to $L^{\#}_d$, $i(y_i)$ is the degree of $y_i$ as a morphism from $L^{\#}_{i-1}$ to $L^{\#}_i$, and $\widetilde{C}$ is smooth circle which is free homotopic to $C$.

\end{lem}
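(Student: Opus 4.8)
The plan is to prove this by a discrete Gauss--Bonnet (total turning) computation, in the same elementary spirit as Remark \ref{rem:DegOfMu} and the degree computations in \cite{Ab08}. The guiding principle is that $w(\widetilde{C})$ is the total winding of the tangent of $\widetilde{C}$ relative to the fixed trivialization $X$, and that this winding is \emph{additive}: after smoothing the corners of the piecewise smooth circle $C$, it decomposes as a sum of the turning accumulated along the smooth arcs (each lying on a single $L_j$) plus a sum of the ``turn left'' exterior angles created at the vertices $y_0, \dots, y_n$. First I would set up a common angular bookkeeping in which, along any $L_j$, the grading $\alpha_j$ serves as a primitive for the turning of the tangent of $L_j$ relative to $X$; this is exactly the content of the definitions of $w$ and of the brane grading $\alpha_j$, together with the unobstructedness $w(L_j)=0$ from Lemma \ref{lem:ExistGoodTriv}. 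Consequently the turning of $\widetilde{C}$ along the arc of $L_j$ between two consecutive marked points is the difference of the values of $\alpha_j$ at its endpoints, in the turning unit fixed by these conventions.

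I would then compute the contribution of the smooth arcs. Summing the endpoint differences of $\alpha_j$ over the $n+1$ arcs and regrouping the telescoping sum by vertices, each interior vertex $y_i$ (for $1 \le i \le n$) collects the combination $\alpha_{i-1}(y_i) - \alpha_i(y_i)$, i.e. minus the quantity whose floor defines $i(y_i)$, while the distinguished vertex $y_0$, where $y_0 \in L_0 \cap L_n$ reverses the roles of source and target, collects $\alpha_n(y_0) - \alpha_0(y_0)$, that is the same type of combination with the opposite sign. This asymmetry is precisely what produces the asymmetric appearance of $i(y_0)$ (subtracted rather than added) in the claimed identity.

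The corner contributions form the step I expect to be the main obstacle. At each $y_i$ the two incident branches are transverse, because the underlying configuration is in general position, and we always turn left; hence the exterior angle created by smoothing is the unique representative of the grading difference $\alpha_i(y_i)-\alpha_{i-1}(y_i)$ lying in the half-open interval singled out by the turn-left rule. The number of whole turns removed in order to land in that interval is governed by $\lfloor \alpha_i(y_i)-\alpha_{i-1}(y_i)\rfloor$, which is exactly why the floor, and therefore $i(y_i)$, enters; the constant $+1$ in $i(y_i) = \lfloor \cdot \rfloor + 1$ accumulates over the $n$ interior vertices into the term $-n$. The delicate points here are: tracking whether the direction in which $C$ traverses each $L_j$ agrees with the brane orientation of $L_j$ (these discrepancies shift the exterior angle by a half-turn and must be accounted for); checking that transversality really confines each turn-left angle to one period; and verifying that the fractional parts left over from the arc computation cancel exactly against those from the corner computation, leaving an integer. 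Fixing the turning unit relating $w$ to the grading $\alpha$ correctly (the factor of two between the winding of the oriented tangent and the winding recorded by $\alpha$, i.e. by the squared phase) is part of this same bookkeeping.

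Finally I would assemble the two computations: the fractional contributions cancel and what survives is the integral identity $w(\widetilde{C}) = i(y_0) - \bigl( i(y_1) + \cdots + i(y_n)\bigr) + n$, which rearranges to the assertion. To pin down the single remaining additive normalization unambiguously I would evaluate both sides on the local model of a small convex $(n{+}1)$-gon with $X$ locally constant, exactly as in the computation behind Remark \ref{rem:DegOfMu}; there one reads off all $i(y_j)$ and the turning of $\widetilde{C}$ by hand and checks the constant. The general configuration then follows because $w(\widetilde{C})$ depends only on the free homotopy class of $\widetilde{C}$, while each $i(y_j)$ is locally constant under any deformation preserving transversality, so both sides are invariant along a homotopy from the given configuration to the model.
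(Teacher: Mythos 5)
Your proposal is correct and follows essentially the same route as the paper: the proof there also rests on reading $w(\widetilde{C})$ as the total turning of the smoothed circle and on using the gradings $\alpha_j$ as single-valued primitives of that turning along each unobstructed $L_j$, except that it first normalizes $i(y_1)=\cdots=i(y_n)=1$ by integer brane shifts (under which both sides of the identity are unchanged) and then asserts the resulting inequality $w(\widetilde{C})-1<\alpha_n(y_0)-\alpha_0(y_0)<w(\widetilde{C})$ in one line, whereas you carry out the arc-plus-corner telescoping in full generality. Your step of calibrating the turning unit on the model convex polygon is genuinely needed, not just bookkeeping: the identity only holds if $w$ is counted in half-turns of the tangent line (so that a small convex $(n{+}1)$-gon traversed counterclockwise has $w(\widetilde{C})=2$), which is the convention the paper actually uses in its computations even though the definition of the writhe is phrased with $\exp(2\pi i a)$.
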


\begin{proof}

As in the case of remark \ref{rem:DegOfMu}, we only show in the case $i(y_1) = i(y_2) = \cdots = i(y_n) = 1$.
By the definition of writhe, we have $w(\widetilde{C}) - 1 < \alpha _d (y_0) - \alpha _0(y_0) < w(\widetilde{C})$.
So we can conclude $i(p_0) = w(\widetilde{C})$ and hence $i(y_0) = i(y_1) + i(y_2) + \cdots + i(y_n) + (w(\widetilde{C}) - n)$.

\hfill $\Box$

\end{proof}

By the above lemma, we have $i(p_{13}) = i(p_{12}) + i(p_{23}) + (w(S_5) - 2) = -2$.

Let us sum up the result in this subsubsection.
The Fukaya-Seidel category $\mathcal{F} (\pi_3)^{\to} \coloneqq \mathcal{F} (\boldsymbol{L}_3^{\#})^{\to}$ is isomorphic to  an $A_{\infty}$-category $\mathcal{A}_3$ defined as follows: ${\rm Ob}(\mathcal{A}_3)
= \{ 1, 2, 3 \}$; the hom spaces are all zero but $hom_{\mathcal{A}_3}^0
(j, j) = k e_j$ for $j = 1, 2, 3$, $hom_{\mathcal{A}_3}^0(j,
j+1) = ke_{j \, j+1}$ for $j = 1, 2$, $hom_{\mathcal{A}_3}^{-2}(1,
3) = kf_{13}$;
the higher composition maps are given by $\mu _{\mathcal{A}_1}^d = 0$ if $d \neq 2$, $\mu _{\mathcal{A}_1}^2(e_k,
x_{jk}) = x_{jk}, \, \mu _{\mathcal{A}_1}^2(x_{jk}, e_j) = x_{jk}$ where $x$ stands for $e$ or $f$, and 
$\mu _{\mathcal{A}_1}^2(e_{23}, e_{12}) = 0$.

\subsection{About the Hochschild cohomology}\label{subsec:Hochchild}

We use the following notation of Hochschild cohomology in \cite{Sh15} adapted to the notation of $A_{\infty}$-category of \cite{Se08}.

\begin{defn}[Hochschild cochain groups]\label{def:HochschildCochain}

Let $\mathcal{A}$ be an c-unital $A_{\infty}$-category.
We set $\mathcal{A}(X_s, X_{s-1}, \dots , X_0) \coloneqq hom_{\mathcal{A}}(X_{s-1}, X_s)[1] \otimes hom_{\mathcal{A}}(X_{s-2}, X_{s-1})[1] \otimes \cdots \otimes hom_{\mathcal{A}}(X_0 ,X_1)[1]$ for objects $X_0, \dots X_s \in Ob(\mathcal{A})$.
We define the Hochschild cochain groups of degree $r$ and length $s$  by 
\[ CC^r(\mathcal{A})^s \coloneqq \prod_{X_0, \dots , X_s \in Ob(\mathcal{A})} {\rm Hom}_{Gr(k)}^r(\mathcal{A}(X_s, X_{s-1}, \dots , X_0), \mathcal{A}(X_0, X_s))[-1], \]
and the Hochschild cochain groups $CC^r(\mathcal{A}) \coloneqq \prod_{s \geq 0} CC^r(\mathcal{A})^s$.
Here, $Gr(k)$ stands for the category of graded $k$ vector spaces.

\end{defn}

\begin{defn}[Gerstenhaber product, bracket]\label{def:GerstenhaberProd}

For $\psi \in CC^r(\mathcal{A})^s, \varphi \in CC^t(\mathcal{A})^u$, we define the Gerstenhaber product $\psi \star \varphi$ by
\[(\psi \star \varphi)(a_{s+u}, \dots , a_1) \coloneqq \sum _i (-1)^{\heartsuit_i}\psi (a_{s+u}, \dots , a_{i+u+1}, \varphi(a_{i+u}, \dots , a_{i+1}), a_i, \dots , a_1)\]
where $\displaystyle \heartsuit _i = (t-1)\sum_{1 \leq j \leq i} (|a_j|-1)$ and the Gerstenhaber bracket by $[\psi, \varphi] \coloneqq \psi \star \varphi - (-1)^{(r-1)(t-1)} \varphi \star \psi$.

\end{defn}

\begin{rem}

There are two remarkable facts: (i) the Gerstenhaber bracket is a graded Lie bracket, but the Gerstenhaber product is far from associative in general. (ii) The $A_{\infty}$-structure of $\mathcal{A}$ is nothing but an element $\mu ^{\bullet} \in CC^2(\mathcal{A})$ satisfying $\mu ^{\bullet} \star \mu ^{\bullet} = 0$, and  $\mu ^0 = 0$ (where $\bullet$ stands for length $s$).

\end{rem}

\begin{defn}[Hochschild cohomology]\label{def:HochschildCohom}

We define the Hochschild differential $M^1 \coloneqq CC^*(\mathcal{A}) \to CC^*(\mathcal{A})[1]$ by $M^1 \coloneqq [-, \mu ^{\bullet}]$ where $\mu ^{\bullet}$ is the $A_{\infty}$-structure. Then, $M^1$ defines a differential, i.e. $M^1 \circ M^1 = 0$. We define Hochschild cohomology groups by $HH^*(\mathcal{A}) \coloneqq H(CC^*(\mathcal{A}), M^1)$.

\end{defn}

\begin{rem}\label{rem:AInftyStrOfHochschildCochains}

The equation for Hochschild differential $M^1 \circ M^1 = 0$ follows from $\mu ^{\bullet} \star \mu ^{\bullet} = 0$.
In fact, we the Hochschild cochain complex $CC^*(\mathcal{A})$ and Gerstenhaber bracket form a differential graded Lie algebra (dgLa) structure, hence $HH^*(\mathcal{A})$ has a structure of graded Lie algebra.
The details can be found in \cite{Sh15}.

\end{rem}

By the definition, we have the following formula:

\begin{lem}\label{lem:CompOfM1AndCC}

Let $\mathcal{A}$ be an $A_{\infty}$-category such that $\mu$'s are all zero except for $\mu ^2$, and hom spaces are concentrated in even degrees.
The Hochschild differential can be described as follows:
for $f \in \prod_{X_0, X_1, \dots , X_d} CC^r(\mathcal{A})^d = {\rm Hom}_{Gr(k)}^r(\mathcal{A}(X_d, X_{d-1}, \dots , X_0), \mathcal{A}(X_d, X_0))[-1]$, 
\begin{align*}
M^1f(a_d, a_{d-1}, \dots , a_0) &= \mu^2(f(a_d, \dots , a_1), a_0) \\
&+ \sum_{1 \leq i \leq d} (-1)^i f(a_d, \dots , \mu^2(a_i, a_{i-1}), \dots a_0) \\
&+ (-1)^{d+1} \mu^2(a_d, f(a_{d-1}, \dots , a_0)).
\end{align*}
Moreover, if the hom spaces are concentrated in degree zero, then we have
$\displaystyle CC^d(\mathcal{A}) = \prod _{X_0, \dots X_d \in Ob(\mathcal{A})}{\rm Hom}_k(F\mathcal{A}(X_d, \dots X_0), F\mathcal{A}(X_d, X_0))$ for $d \geq 0$ and $CC^*(\mathcal{A}) = 0$ for $d < 0$,
where $F$ is the  forgetful functor from graded vector spaces to vector spaces and ${\rm Hom}_k$ stands for the space of linear maps.

\end{lem}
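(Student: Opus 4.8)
The plan is to expand the Hochschild differential $M^1 = [-, \mu^\bullet]$ directly from Definition \ref{def:GerstenhaberProd} and Definition \ref{def:HochschildCohom}, and to observe that under the hypotheses the graded (Koszul) signs of the Gerstenhaber product degenerate into the elementary signs of the classical Hochschild coboundary of a graded associative algebra. First I would write, for $f \in CC^r(\mathcal{A})^d$,
\[ M^1 f = [f, \mu^\bullet] = f \star \mu^\bullet - (-1)^{(r-1)(2-1)} \mu^\bullet \star f = f \star \mu^\bullet - (-1)^{r-1}\mu^\bullet \star f, \]
using that $\mu^\bullet$ has degree $t = 2$. Since by hypothesis $\mu^1 = 0$ and every $\mu^k$ with $k \neq 2$ vanishes, only the length-$2$ component $\mu^2$ of $\mu^\bullet$ contributes to either Gerstenhaber product. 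I would then expand both products termwise: in $f \star \mu^2$ the factor $\mu^2$ is inserted into each of the $d$ inputs of $f$, producing exactly the terms $f(a_d, \dots, \mu^2(a_i, a_{i-1}), \dots, a_0)$; in $\mu^2 \star f$ the cochain $f$ is inserted into one of the two inputs of $\mu^2$, producing precisely the two boundary terms $\mu^2(f(a_d, \dots, a_1), a_0)$ and $\mu^2(a_d, f(a_{d-1}, \dots, a_0))$.

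The decisive simplification comes from the even-degree hypothesis. Each reduced degree $|a_j| - 1$ is odd, so $\heartsuit_i = (t-1)\sum_{1 \le j \le i}(|a_j| - 1) \equiv (t-1)\, i \pmod 2$; in particular $\heartsuit_i \equiv i$ for the insertion of $\mu^2$ (where $t = 2$). Feeding this into the two expansions collapses the Koszul signs $(-1)^{\heartsuit_i}$ in $f \star \mu^2$ to the powers $(-1)^i$ of the middle sum, while the combination of the outer sign $-(-1)^{r-1}$ with the internal signs of the two insertions into $\mu^2$ is checked to produce the coefficients $+1$ and $(-1)^{d+1}$ on the two boundary terms. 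This is exactly the bar/Hochschild coboundary for the graded associative algebra $(\mathcal{A}, \mu^2)$, so the displayed formula follows. I expect this sign reconciliation to be the only genuinely delicate point: one must track the index shift between the bracket convention of Definition \ref{def:GerstenhaberProd} (indices running $1, \dots, s+u$) and the convention $(a_d, \dots, a_0)$ of the statement, and verify that the $r$-dependence carried by $(-1)^{r-1}$ cancels against the $r$-dependence hidden in the $\heartsuit_i$ of $\mu^2 \star f$, leaving the two boundary coefficients independent of $r$ as stated. Everything else is routine bookkeeping.

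For the "moreover" part I would specialize to hom spaces concentrated in degree zero. Then each shifted factor $hom_{\mathcal{A}}(X_{j-1}, X_j)[1]$ sits in degree $-1$, so the source $\mathcal{A}(X_s, \dots, X_0)$ is concentrated in degree $-s$ and the target $\mathcal{A}(X_0, X_s)$ in a single degree as well. Consequently a homogeneous element of ${\rm Hom}_{Gr(k)}(\mathcal{A}(X_s, \dots, X_0), \mathcal{A}(X_0, X_s))[-1]$ of cohomological degree $r$ can be nonzero only when the length $s$ equals $r$, and never when $r < 0$, since the length is nonnegative. Because every graded space involved is concentrated in one degree, ${\rm Hom}_{Gr(k)}$ reduces to ordinary ${\rm Hom}_k$ after applying the forgetful functor $F$, yielding the identification $CC^d(\mathcal{A}) = \prod_{X_0, \dots, X_d} {\rm Hom}_k(F\mathcal{A}(X_d, \dots, X_0), F\mathcal{A}(X_d, X_0))$ for $d \ge 0$ and $CC^d(\mathcal{A}) = 0$ for $d < 0$. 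This last step is purely formal once the degree-length locking is recorded.
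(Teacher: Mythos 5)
Your proposal is correct and follows the same route the paper intends: the paper offers no written proof beyond the remark ``By the definition, we have the following formula,'' and your argument is precisely the direct expansion of $M^1 = [-,\mu^\bullet]$ via Definition \ref{def:GerstenhaberProd}, with the even-degree hypothesis collapsing the Koszul signs $(-1)^{\heartsuit_i}$ to the classical Hochschild signs and the degree--length locking giving the ``moreover'' clause. You supply more detail than the paper does (in particular, flagging the index-shift between the $(a_{s+u},\dots,a_1)$ convention and $(a_d,\dots,a_0)$, where the paper's own definition is loose), but the method is identical.
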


\subsection{Computation of the Hochschild cohomology}\label{subsec:CompOfHochschild}

In this subsection, we compute the Hochschild cohomology groups of $\mathcal{A}_i$ and finish the proof of theorem \ref{thm:ComputationalEx}.
Our composition map $\mu^2$ is naturally considered as a map $\displaystyle \mu^2 \colon \bigoplus _{X_0, X_1, X_2} \hom_{\mathcal{A}_i}(X_1, X_2) \otimes \hom_{\mathcal{A}_i}(X_0, X_1) \to \bigoplus _{X_0, X_2} \hom_{\mathcal{A}_i}(X_0, X_2)$, but in this subsection, we extend the domain of $\mu^2$ and consider it as
a map
\[ \mu^2 \colon \left( \bigoplus_{X_0, X_1} \hom_{\mathcal{A}_i}(X_0, X_1) \right)^{\otimes 2} \to \bigoplus_{X_0, X_1} \hom_{\mathcal{A}_i}(X_0, X_1).\]
Hence, we can write $\mu^2(b, a)$ for arbitrary $a \in \hom_{\mathcal{A}_i}(X_0, X_1)$ and $b \in \hom_{\mathcal{A}_i}(X_2, X_3)$ even if $X_1 \neq X_2$, and in this case, $\mu^2(b, a) = 0$.

\subsubsection{$HH^*(\mathcal{A}_2)$}\label{subsubsec:HHA2}

First, we compute the Hochschild cohomology groups of our SECOND $A_{\infty}$-category $\mathcal{A}_2$.
By lemma \ref{lem:CompOfM1AndCC}, we have $\displaystyle
CC^d(\mathcal{A}_2) = \prod _{X_0, \dots X_d \in Ob(\mathcal{A}_1)}{\rm Hom}_k(F\mathcal{A}_2(X_d,
\dots X_0), F\mathcal{A}_2(X_d, X_0))$ for $d \geq 0$ and $CC^{-1}(\mathcal{A}_2)
= 0$, where $F$ is the degree forgetting functor.
We will omit this $F$ for the sake of simplicity.

In the case of $\mathcal{A}_2$, ${\rm Hom}_k(\mathcal{A}_2(X_d, \dots X_0),
\mathcal{A}_2(X_d, X_0))$ is non-zero only when $X_0 \leq X_1 \leq \cdots
\leq X_d \in \{ 1, 2, 3\}$, and if so, this space is one-dimensional.
We write their canonical generator by $X_d X_{d-1} \cdots X_0$.

By Lemma \ref{lem:CompOfM1AndCC}, we have $CC^0(\mathcal{A}_2) = k \cdot 1 \oplus k \cdot 2 \oplus k \cdot 3$, where
$p = 1_p \in {\rm Hom}_k (k, \mathcal{A}_2(p, p))$ which is defined by $1_p(1_k) = e_p$ for the unit  $1_k$  in $k$. Also we have $\displaystyle CC^1(\mathcal{A}_2) = \bigoplus_{1\leq p \leq q \leq
3} k \cdot qp$, $\displaystyle CC^2(\mathcal{A}_2) = \bigoplus_{1\leq p \leq
q \leq r \leq 3} k \cdot rqp$, and so on.
The possibly non-zero part of $M^1(1)$ is:
\begin{align*}
M^1(1)(e_1) &= \mu^2(1_1(1_k), e_1) - \mu^2(e_1, 1_1(1_k)) \\
&= e_1 - e_1 = 0, \\
M^1(1)(e_{1q}) &= \mu^2(1_1(1_k), e_{1q}) - \mu^2(e_{1q}, 1_1(1_k)) \\
&= 0 - e_{1q} = -e_{1q} \, \, \, (q = 2, 3).
\end{align*}
Here the $0$ in the fourth line comes from the discordance of the target of $e_{1q}$ and source of $e_1$.
Hence we have $M^1(1) = - 21 - 31$.

In the same way, the possibly non-zero part of $M^1(2)$ is:
\begin{align*}
M^1(2)(e_2) &= 0, \\
M^1(2)(e_{23}) &= -e_{23}, \\
M^1(2)(e_{12}) &= e_{12}.
\end{align*}
Hence we have $M^1(2) = 21 - 32$. By the same computation, we have $M^1(3)
=  31 + 32$.
Finally, we can conclude that $HH^0(\mathcal{A}_1) = \ker M^1 = k \, (1 + 2 + 3) \cong
k$.

Next, we compute $M^1 \colon CC^1 \to CC^2$.
To compute this differential, we compute its dual $m \colon (CC^2)^{\vee} \to (CC^1)^{\vee}$.
Now, we have $\mathcal{A}_2(q, p) \cong k$ for $p \leq q$ and there are canonical generators $e_{pq}$, and $CC^d(\mathcal{A}_2)$ are finite dimensional, hence there is canonical isomorphism $CC^d(\mathcal{A}_2) \cong \prod \mathcal{A}_2(X_d, X_{d-1}, \dots , X_0)^{\vee}$.
In this sence,  we identify $m$ with a map $m \colon \prod _{X_0, X_1, X_2} \mathcal{A}_2(X_2, X_1, X_0) \to \prod _{X_0, X_2} \mathcal{A}_2(X_2, X_0)$.
Let us write the canonical generator of $\mathcal{A}_2(q, p)$ again by $qp$, and $\mathcal{A}_2(r, p, q)$ by $rqp$ for $p \leq q \leq r$.
Thus we have the following formula:

\begin{lem}

$m(rqp) = rq - rp + qp$.

\end{lem}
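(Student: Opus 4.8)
The plan is to obtain $m$ as the transpose of the differential $M^1 \colon CC^1(\mathcal{A}_2) \to CC^2(\mathcal{A}_2)$ in the canonical bases. All spaces in sight are finite dimensional and carry the distinguished generators $qp$ (for $CC^1$, and dually for $(CC^1)^{\vee}$) and $rqp$ (for $CC^2$), so under the identifications already set up, $m$ is literally the transposed matrix of $M^1$. In particular the coefficient of $qp$ in $m(rqp)$ equals the coefficient of $rqp$ in $M^1$ applied to the length-one generator supported on $hom_{\mathcal{A}_2}(p,q)$, and likewise for $rq$ and $rp$; hence it suffices to evaluate $M^1$ on each length-one generator $ba$ (with $a \le b$) and record which length-two generators occur.

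First I would specialise the formula of Lemma \ref{lem:CompOfM1AndCC} to $d = 1$, so that for the generator $ba$, the cochain returning its argument on $hom_{\mathcal{A}_2}(a,b)$ and vanishing elsewhere, one has $M^1(ba)(e_{qr}, e_{pq}) = \mu^2((ba)(e_{qr}), e_{pq}) - (ba)(\mu^2(e_{qr}, e_{pq})) + \mu^2(e_{qr}, (ba)(e_{pq}))$ on the generator $rqp$, after which I read off the coefficient of the output generator $e_{pr}$. Evaluating the three summands, the first is nonzero exactly when $(a,b) = (q,r)$ and then equals $\mu^2(e_{qr}, e_{pq}) = e_{pr}$; the third is nonzero exactly when $(a,b) = (p,q)$ and again equals $e_{pr}$; and the middle one is nonzero exactly when $(a,b) = (p,r)$, carrying the sign $-1$. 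Transposing, these three statements say respectively that $m(rqp)$ contains $+rq$, $+qp$ and $-rp$, which is the claimed formula.

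The one structural input I would isolate is that in $\mathcal{A}_2$ every composition behaves as in an honest category: all products involving a unit are the evident ones, and the single genuinely nontrivial product is $\mu^2(e_{23}, e_{12}) = e_{13}$. This is exactly what makes the middle term land on the direct morphism $e_{pr}$, rather than on $0$ as would happen for $\mathcal{A}_1$, so that the $-rp$ contribution survives for all $p \le q \le r$. For the degenerate generators, in which some of $p,q,r$ coincide and one of $e_{qr}, e_{pq}$ is a unit, two of the three terms cancel and the formula reduces correctly.

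The main obstacle I expect is bookkeeping rather than conceptual: carefully matching the signs $(-1)^i$ and $(-1)^{d+1}$ of Lemma \ref{lem:CompOfM1AndCC} with the transposition so that the three summands line up precisely with $+rq$, $-rp$ and $+qp$, and verifying that no further generators contribute to a fixed $m(rqp)$ beyond the three identified. Once the three summands are recognised as the three simplicial face maps of the bar complex, the formula $m(rqp) = rq - rp + qp$ is immediate.
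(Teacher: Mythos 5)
Your proposal is correct and follows essentially the same route as the paper, which identifies $m$ as the dual of $M^1\colon CC^1\to CC^2$ and reads off the three face-map contributions from the $d=1$ specialisation of Lemma \ref{lem:CompOfM1AndCC}; the paper merely illustrates this with the single example $m(321)=32-31+21$ and declines to write out the general case, whereas you carry out the same computation for all $p\le q\le r$, including the degenerate unit cases. The structural point you isolate --- that $\mu^2(e_{qr},e_{pq})=e_{pr}$ holds for every $p\le q\le r$ in $\mathcal{A}_2$ --- is exactly what the paper uses implicitly.
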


We won't prove this lemma but see one example of $m(321)$.
For $f = 11, 21, 31, 22, 32, 33 \in CC^2(\mathcal{A}), a_1 \in \mathcal{A}_2(2, 1)$, and $a_2 \in \mathcal{A}_2(3, 2)$, we have $M^1f(a_2, a_1) = f(a_2) \circ a_1 - f(a_2 \circ a_1) + a_2 \circ f(a_1)$.
If this map $(a_2, a_1) \mapsto f(a_2) \circ
a_1$ is non-zero, then $f$ must be equals to $32$. In the same manner, when we consider the case of second  term of $M^1f$, we have $f = 31$, and from the third term, we have $f = 21$.
By this computation, we finally have $m(321) = 32 - 31 + 21$.

By the above lemma, we have the formula of $M^1$:
\begin{align*}
M^1(11) &= 111 + 211 + 311, \\
M^1(21) &= 321, \\
M^1(31) &= -321, \\
M^1(22) &= 221 + 222 + 322, \\
M^1(32) &= 321, \\
M^1(33) &= 331 + 332 + 333.
\end{align*}
Hence, we have $\ker M^1 = \{ a_{21} \, 21 + a_{31} \, 31 + a_{32} \, 32 \, | \, a_{21} - a_{31} + a_{32} = 0 \} = {\rm im}M^1$ in $CC^1(\mathcal{A}_2)$ hence we can conclude that $HH^1(\mathcal{A}_2) = 0$.

\subsubsection{$HH^*(\mathcal{A}_1)$}\label{subsubsec:HHA1}

We can compute that $HH^0(\mathcal{A}_1) \cong k$, since $\mathcal{A}_1$ and $\mathcal{A}_2$ have the same hom spaces and the same value of $\mu ^2$ when we substitute $e_i$ in one of the two entries.
Now we compute the $HH^1(\mathcal{A}_1)$.
First, we compute $m$ as in the case of $\mathcal{A}_2$.
Because $\mu ^2(e_{23}, e_{21}) = 0$, we have $m(321) = 0$ by the following computation: for $a_1 \in \mathcal{A}_2(2, 1)$ and $a_2 \in \mathcal{A}_2(3, 2)$, we have $M^1f(a_2, a_1) = \mu^2(f(a_2),
a_1) - f(\mu^2(a_2, a_1)) + \mu^2(a_2, f(a_1))$, but each term contains $\mu ^2(e_{23}, e_{21})$ hence vanishes.
We can check that this is the only difference between $m$ of $\mathcal{A}_1$ and $\mathcal{A}_2$. Finally, we have the following formula:
\begin{align*}
M^1(11) &= 111 + 211 + 311, \\
M^1(21) &= 0, \\
M^1(31) &= 0, \\
M^1(22) &= 221 + 222 + 322, \\
M^1(32) &= 0,  \\
M^1(33) &= 331+332+333.
\end{align*}
In $CC^1(\mathcal{A}_1)$, we have $\ker M^1 = k \, 21 \oplus k \, 31 \oplus k \, 32$ and the image of $M^1 \colon CC^0 \to CC^1$ is the same as that in the case of $\mathcal{A}_2$, hence we get $HH^1(\mathcal{A}_1) \cong k$.

\subsubsection{$HH^*(\mathcal{A}_3)$}\label{subsubsec:HHA3}

In this case, the situation is  different since the degree of the morphism from $1$ to $3$ is $-2$. Because of that, the degree of the element $3 \cdots 32 \cdots 21 \cdots 1 \in CC^*(\mathcal{A}_3)$ is smaller than that of $3 \cdots 32 \cdots 21 \cdots 1 \in CC^*(\mathcal{A}_1)$ by two if the character string contains each character at least once.
This is all of the differences of degrees.
So we have
$CC^d(\mathcal{A}_3) = 0$ for $d < 0$, 
$CC^0(\mathcal{A}_3) = k \, 1 \oplus k \, 2 \oplus k \, 3 \oplus k \, 321$, and $\displaystyle CC^1(\mathcal{A}_3) = \bigoplus _{1 \leq p \leq q \leq 3} k \, qp \oplus k \, 3211 \oplus k \, 3221 \oplus k \, 3321$.
We can compute $m \colon CC^1(\mathcal{A}_3)^{\vee} \to CC^0(\mathcal{A}_3)^{\vee}$ as follows:
\begin{align*}
m(qp) &= -p + q, \\
m(3211) &= 321 - 321 + 0 - 0 = 0, \\
m(3221) &= 0 - 321 + 321 - 0 = 0, \\
m(3321) &= 0 - 0 + 321 - 321 = 0,\\ 
\end{align*}
as in the previous case.
The zeros above are due to $\mu ^2(e_{23}, e_{12}) = 0$.
Hence we have, 
\begin{align*}
M^1(1) &= -21 - 31, \\
M^1(2) &= 21 - 32, \\
M^1(3) &= 31 + 32, \\
M^1(321) &= 0. 
\end{align*}
Thus we have $HH^0(\mathcal{A}_3) = \ker M^1 = k \, (1 + 2 + 3) \oplus k \, 321 \cong k^2$.

By the same computation, we have the values of $m$ for the basis of $CC^2(\mathcal{A}_3)^{\vee}$ as follows:
\begin{align*}
m(rqp) &= rq - rp + qp \, \, \, ( \text{for } (p, q, r) \neq (1, 2, 3) \, ), \\
m(32111) &= 3211 - 3211 + 3211 + 0 - 0 = 3211, \\
m(32211) &= 3221 - 3221 + 3211 - 3211 + 0 = 0, \\
m(32221) &= 0 - 3221 + 3221 - 3221 + 0 = -3221, \\
m(33211) &= 3321 - 3321 + 0 - 3211 + 3211 = 0, \\ 
m(33221) &= 0 - 3321 + 3321 - 3221 + 3221 = 0, \\
m(33321) &= 0 - 0 + 3321 - 3321 + 3321 = 3321,
\end{align*}
again the above zeros are due to $\mu ^2(e_{23}, e_{12}) = 0$. Thus we have
\begin{align*}
M^1(11) &= 111 + 211 + 321 \\
M^1(21) &= 0 \\
M^1(31) &= 0 \\
M^1(22) &= 221 + 222 + 322 \\
M^1(32) &= 0 \\
M^1(33) &= 331 + 332 + 333 \\
M^1(3211) &= 32111 \\
M^1(3221) &= -32221 \\
M^1(3321) &= 33321.
\end{align*}
Hence, in $CC^1(\mathcal{A}_2)$ we obtain $\ker M^1 = k \, 21 \oplus k \, 31 \oplus k \, 32$ and ${\rm im} \,  M^1 = k \, (21 +\ 31) \oplus k(-21 + 32)$, finally we have $HH^1(\mathcal{A}_3) \cong k$.
This completes all of the computation.

The Milnor lattices, which cannot distinguish these three Lefschetz fibrations, involves only the information of the intersections of the two vanishing cycles.
On the other hand, in the case of the Fukaya-Seidel categories, the positional relation of three or more vanishing cycles is taken into account. We can think that this is the reason why the Fukaya-Seidel categories can distinguish the above three Lefschetz fibrations.
Thus, we can say that the Fukaya-Seidel categories capture somewhat higher relations of vanishing cycles, which are, by definition,  polygons enclosed by the vanishing cycles, as the higher composition maps $\mu$'s.



\end{document}